\newtheorem{thm}{Theorem}[section]
\newtheorem{lem}[thm]{Lemma}
\newtheorem{cor}[thm]{Corollary}
\newtheorem{rmk}[thm]{Remark}
\theoremstyle{definition}
\DeclareMathOperator\Aut{Aut}
\DeclareMathOperator\sym{Sym}
\DeclareMathOperator\alt{Alt}
\newcommand{\agl}[2]{\operatorname{AGL}_{#1}(#2)}
\newcommand{\pgl}[2]{\operatorname{PGL}_{#1}(#2)}
\newcommand{\psl}[2]{\operatorname{PSL}_#1(#2)}
\newcommand{\pgammal}[2]{\operatorname{P\Gamma L}_#1(#2)}
\newcommand{\gl}[2]{\operatorname{GL}_{#1}(#2)}
\newcommand{\gammal}[2]{\operatorname{\Gamma L}_{#1}(#2)}
\newcommand{\pg}[2]{\operatorname{PG}_{#1}(#2)}
\newcommand{\dih}[1]{\operatorname{D}_{#1}}
\newcommand{\itbf}[1]{{\bf{{\emph{{#1}}}}}}
\letcs\replicate{prg_replicate:nn}
\begin{document}	
	
	\title[]{On the maximum intersecting sets of the general semilinear group of degree $2$}
	
	\author[R. Maleki]{Roghayeh Maleki\textsuperscript{$a$}}	
	\author[A.S. Razafimahatratra]{Andriaherimanana Sarobidy Razafimahatratra\textsuperscript{$a,b$,*}}

	\subjclass[2010]{Primary 05C35; Secondary 05C69, 20B05}
	
	\keywords{Derangement graphs, cocliques, projective special linear groups}
	
	\date{\today}
	
	\maketitle
	
	\begin{center}
		\vspace*{-0.5cm}
		{\Small \it \textsuperscript{a}University of Primorska, UP FAMNIT, Glagolja\v{s}ka 8, 6000 Koper, Slovenia\\
		\textsuperscript{b}University of Primorska, UP IAM, Muzejski trg 2, 6000 Koper, Slovenia\\
		\textsuperscript{*} Corresponding author:  \href{mailto:sarobidy@phystech.edu}{sarobidy@phystech.edu}}
	\end{center}
	
	\begin{abstract}
		Let $p$ be a prime and $q = p^k$. A subset $\mathcal{F} \subset \gammal{2}{q}$ is intersecting if any two semilinear transformations in $\mathcal{F}$ agree on some non-zero vector in $\mathbb{F}_q^2$. We show that any intersecting set of $\gammal{2}{q}$ is of size at most that of a stabilizer of a non-zero vector, and we characterize the intersecting sets of this size. Our proof relies on finding a subgraph which is a lexicographic product in the derangement graph of $\gammal{2}{q}$ in its action on non-zero vectors of $\mathbb{F}_q^2$. This method is also applied to give a new proof that the only maximal intersecting sets of $\gl{2}{q}$ are the maximum intersecting sets.
	\end{abstract}
	
	\section{Introduction}
	
	The Erd\H{o}s-Ko-Rado (EKR) theorem \cite{erdos1961intersection} is an extremal set theory result that characterizes the largest collection of $k$-subsets of $[n]:= \{1,2,\ldots,n \}$ with the property that any two $k$-subsets are not disjoint. We state the EKR theorem as follows.
	\begin{thm}[EKR]
		Let $k\leq n$ be two positive integers such that $2k\leq n$. If $\mathcal{F}$ is a collection of $k$-subsets of $[n]$ such that $A\cap B \neq \varnothing$ for all $A,B\in \mathcal{F}$, then 
		\begin{align}
			| \mathcal{F}| \leq \binom{n-1}{k-1}.\label{eq1}
		\end{align}
		If $n\geq 2k+1$, then equality holds in \eqref{eq1} if and only if $\mathcal{F}$ is a \itbf{canonical intersecting set}, that is, there exists $a\in [n]$ such that
		\begin{align*}
			\bigcap_{A\in \mathcal{F}} A = \{a\}.
		\end{align*}
	\end{thm}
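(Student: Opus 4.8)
The plan is to prove the inequality \eqref{eq1} by Katona's \emph{cycle method} (the circular permutation method), and then to obtain the characterization of equality by refining the same argument.

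\textbf{The bound.} Arrange $[n]$ around a circle, i.e.\ fix a cyclic order $\pi$; call a $k$-subset an \emph{arc} of $\pi$ if its elements occupy $k$ consecutive positions, so each $\pi$ has exactly $n$ arcs. The combinatorial heart is the claim that, for a fixed $\pi$, at most $k$ arcs of $\pi$ lie in $\mathcal{F}$. To see this, suppose $A_0\in\mathcal{F}$ is an arc of $\pi$. Every other arc of $\pi$ meeting $A_0$ belongs to exactly one of $k-1$ pairs $\{B,B'\}$, where $B$ ends just before some internal position of $A_0$ and $B'$ starts there; since $n\ge 2k$, in each such pair $B\cup B'$ is a block of $2k$ consecutive positions, so $B\cap B'=\varnothing$ and at most one of $B,B'$ lies in $\mathcal{F}$. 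Hence $\mathcal{F}$ contains at most $1+(k-1)=k$ arcs of $\pi$. Now double count incidences between cyclic orders and arcs in $\mathcal{F}$: there are $(n-1)!$ cyclic orders, each contributing at most $k$ such incidences, while a fixed $k$-set is an arc in exactly $k!\,(n-k)!$ cyclic orders (contract it to a single object, arrange the resulting $n-k+1$ objects cyclically, then order the block internally). Therefore $|\mathcal{F}|\,k!\,(n-k)!\le (n-1)!\,k$, which rearranges to $|\mathcal{F}|\le\binom{n-1}{k-1}$.

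\textbf{Equality when $n\ge 2k+1$.} If $|\mathcal{F}|=\binom{n-1}{k-1}$, the double-counting inequality is tight, so \emph{every} cyclic order $\pi$ has exactly $k$ arcs in $\mathcal{F}$, and in the pairing argument above one must choose $A_0$ together with exactly one arc from each of the $k-1$ disjoint pairs, all $k$ of them pairwise intersecting. I would first show that, because $n>2k$, this is only possible if the $k$ chosen arcs all contain a common element of $[n]$ (their starting positions form a run of $k$ consecutive positions on the circle, whose arcs share the ``middle'' element). The remaining, genuinely delicate, step is to glue this per-order information together: one must check that the common element can be chosen consistently across all cyclic orders, producing a single $a\in[n]$ with $\bigcap_{A\in\mathcal{F}}A=\{a\}$. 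I expect this gluing to be the main obstacle, and it is exactly here that $n\ge 2k+1$ is indispensable: for $n=2k$ one may instead pick one set from each complementary pair of $k$-subsets, yielding many non-canonical extremal families. If a more economical route is wanted, once the bound is known the equality case also follows from the Hilton--Milner theorem, or from a standard shifting/compression argument that transforms $\mathcal{F}$ toward a canonical family without decreasing its size.

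Alternatively, a proof in the spectral spirit of the present paper: $\mathcal{F}$ is a coclique in the Kneser graph $K(n,k)$, whose eigenvalues are $(-1)^{i}\binom{n-k-i}{k-i}$ for $0\le i\le k$; the Delsarte--Hoffman ratio bound gives $|\mathcal{F}|\le\binom{n-1}{k-1}$, and analysing the eigenspace of the least eigenvalue recovers the canonical families when $n\ge 2k+1$.
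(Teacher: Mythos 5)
The paper does not prove this statement: it is the classical Erd\H{o}s--Ko--Rado theorem, quoted with a citation to \cite{erdos1961intersection} as background, so there is no internal proof to compare against. Judged on its own terms, the first half of your argument is correct and complete: the cycle (Katona) argument is carried out accurately --- the pairing of the $2(k-1)$ arcs meeting $A_0$ into $k-1$ disjoint pairs uses $n\ge 2k$ exactly where it is needed, the count $k!\,(n-k)!$ of cyclic orders in which a fixed $k$-set is an arc is right, and the double count $|\mathcal{F}|\,k!\,(n-k)!\le k\,(n-1)!$ rearranges to $\binom{n-1}{k-1}$ as claimed.

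The characterization of equality, however, is not proved. You correctly isolate the two steps --- (i) for each cyclic order the $k$ arcs of $\mathcal{F}$ it contains form a star through a single point, and (ii) these per-order points cohere into one global element $a$ with $\bigcap_{A\in\mathcal{F}}A=\{a\}$ --- but step (i) is only asserted (the parenthetical about starting positions forming a run of $k$ consecutive positions is the right picture, but it needs the observation that among the $2^{k-1}$ ways of choosing one arc from each pair, only the $k$ ``monotone'' choices give pairwise intersecting families), and step (ii), which you yourself flag as ``the main obstacle,'' is not attempted at all. Since step (ii) is precisely where the uniqueness content lives (and where $n\ge 2k+1$ versus $n=2k$ bites), the ``only if'' direction of the equality statement remains unestablished. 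The alternatives you list --- Hilton--Milner, compression, or the ratio bound on the Kneser graph with analysis of the least eigenspace --- would each close the gap, but they are named rather than executed, so as written the proposal proves the bound but not the stability half of the theorem.
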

	
	We are interested in extending the EKR theorem to other combinatorial objects (see \cite{godsil2016erdos} for examples of these). In this paper we will focus on an extension of the EKR theorem for a transitive action of the general semilinear group of degree $2$. 
	
	All groups considered in this paper are finite, and all group actions are left actions. Given a finite transitive group $G\leq \sym(\Omega)$, we say that $\mathcal{F} \subset G$ is \itbf{intersecting} if for any $g,h\in G$, there exists $\omega \in \Omega $ such that $g(\omega) = h(\omega)$. Note that the stabilizer $G_\omega= \{ g\in G: g(\omega) = \omega \}$ of $\omega$ in G and its cosets are intersecting sets of $G$. We say that $G\leq \sym(\Omega)$ has the \itbf{EKR property} if any intersecting set $\mathcal{F} \subset G$ has size at most $|G_\omega| = \frac{|G|}{|\Omega|}$. If the largest intersecting sets of $G\leq \sym(\Omega)$ are cosets of a stabilizer of an element of $\Omega$, then we say that $G$ has the \itbf{strict-EKR property}. The study of permutation groups having these properties is currently a very active research area. For instance, all doubly transitive or $2$-transitive groups have been proved to have the EKR property \cite{meagher2016erdHos}. Some $2$-transitive groups such as the ones corresponding to the natural actions of $\sym(n)$ and $\alt(n)$ on $[n]$, for $n\geq 5$, have the strict EKR property, however there are also $2$-transitive groups that do not have the strict-EKR property such as $\pgl{3}{q}$ in its natural action on the points of the projective plane $\pg{2}{q}$. In general, Spiga showed in 2019 that $\pgl{n}{q}$ in its action on the points of the projective space $\pg{n-1}{q}$ does not have the strict-EKR property \cite{spiga2019erdHos} (but has the EKR property). When the degree of this linear group is restricted to $2$, the situation is much different, as proved in 2011 by Meagher and  Spiga \cite{meagher2011erdHos}.
	\begin{thm}[Meagher-Spiga]
		If $q$ is a prime power, then the group $\pgl{2}{q}$ acting on the projective line $\pg{1}{q}$ has the EKR and strict-EKR properties.\label{thm0}
	\end{thm}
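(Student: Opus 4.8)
The plan is to translate the statement into spectral graph theory and then apply Hoffman's ratio bound. Put $G=\pgl{2}{q}$ and $\Omega=\pg{1}{q}$, so that $|G|=q(q^2-1)$, $|\Omega|=q+1$, and a point stabiliser $G_\omega$ (a Borel subgroup) has order $q(q-1)=|G|/|\Omega|$. Let $\mathcal D\subseteq G$ be the set of elements acting fixed-point-freely on $\Omega$ and form the derangement graph $\Gamma=\mathrm{Cay}(G,\mathcal D)$. Then intersecting sets of $G$ are exactly the cocliques of $\Gamma$, the cosets of point stabilisers are cocliques of size $q(q-1)$, so the EKR property is the assertion $\alpha(\Gamma)=q(q-1)$, and strict-EKR is the assertion that every coclique of that size is one of the cosets $S_{\alpha,\beta}:=\{g\in G: g(\alpha)=\beta\}$.

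For the bound I would apply Hoffman's ratio bound. Since $\mathcal D$ is closed under conjugation, the eigenvalues of $\Gamma$ are $\xi_\chi=\tfrac1{\chi(1)}\sum_{g\in\mathcal D}\chi(g)$ for $\chi\in\mathrm{Irr}(G)$, with valency $\xi_{\mathbf 1}=|\mathcal D|$. The permutation character of $G$ on $\Omega$ is $\mathbf 1+\mathrm{St}$, where $\mathrm{St}$ is the Steinberg character of degree $q$; since derangements have no fixed points, $\sum_{g\in\mathcal D}(\mathbf 1+\mathrm{St})(g)=\sum_{g\in\mathcal D}\mathrm{fix}(g)=0$, so $\xi_{\mathrm{St}}=-|\mathcal D|/q$. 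If $\xi_{\mathrm{St}}$ is the \emph{least} eigenvalue of $\Gamma$, Hoffman's bound gives
\begin{equation*}
\alpha(\Gamma)\ \le\ \frac{|G|}{1-\xi_{\mathbf 1}/\xi_{\mathrm{St}}}\ =\ \frac{|G|}{1+q}\ =\ q(q-1),
\end{equation*}
which settles the EKR property. Proving the minimality of $\xi_{\mathrm{St}}$ is the technical core: only the ``elliptic'' elements (those with irreducible characteristic polynomial over $\mathbb F_q$) are derangements, so one must show $\tfrac1{\chi(1)}\sum_{g\ \mathrm{elliptic}}\chi(g)\ge -|\mathcal D|/q$ for every $\chi\in\mathrm{Irr}(\pgl{2}{q})$, using the explicit character table and the sizes of the elliptic classes; I expect this estimate to be the main obstacle. (If one only wants the bound, a shortcut exists: a non-split torus is a cyclic subgroup of order $q+1$ acting regularly on $\Omega$, hence a clique of $\Gamma$, and the clique--coclique bound already gives $\alpha(\Gamma)\le |G|/(q+1)$; but this gives no handle on the extremal sets.)

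For strict-EKR the lever is tightness in Hoffman's bound. If $\mathcal F$ is a coclique with $|\mathcal F|=q(q-1)$, equality forces $\mathbf 1_{\mathcal F}-\tfrac1{q+1}\mathbf 1$ to lie in the $\xi_{\mathrm{St}}$-eigenspace of $\Gamma$; assuming (again from the character computation) that $\xi_{\mathrm{St}}$ is attained only by $\mathrm{St}$, this eigenspace is the $\mathrm{St}$-isotypic component of $\mathbb C[G]$, so $\mathbf 1_{\mathcal F}$ lies in the $(1+q^2)$-dimensional space $U:=\mathbb C\mathbf 1\oplus(\text{$\mathrm{St}$-isotypic component})$. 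One checks that $U$ is precisely the span of the vectors $v_{\alpha,\beta}=\mathbf 1_{S_{\alpha,\beta}}$, which satisfy $\sum_\beta v_{\alpha,\beta}=\mathbf 1=\sum_\alpha v_{\alpha,\beta}$ and, because $\pgl{2}{q}$ is sharply $3$-transitive, $|S_{\alpha,\beta}\cap S_{\alpha',\beta'}|=q-1$ when $\alpha\ne\alpha'$ and $\beta\ne\beta'$, while $S_{\alpha,\beta}\cap S_{\alpha',\beta'}=\varnothing$ when $\alpha\ne\alpha'$ and $\beta=\beta'$. It then remains to prove the purely linear-algebraic statement that the only $0/1$-vectors of weight $q(q-1)$ in $U$ are the $v_{\alpha,\beta}$ themselves; here I would play these intersection numbers against membership in the low-dimensional space $U$ to exclude any other Boolean combination.

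In summary, the two hard points are (a) showing $\xi_{\mathrm{St}}$ is the minimum eigenvalue of $\Gamma$ — an estimate over the elliptic conjugacy classes in the character table of $\pgl{2}{q}$ — and (b) the $0/1$-characterisation inside the module $U$. The method advertised in this paper, exhibiting a lexicographic-product subgraph of the derangement graph, is an alternative route that would sidestep both (a) and (b), and I would keep it in reserve as a cleaner approach if the direct eigenvalue estimate proved unwieldy.
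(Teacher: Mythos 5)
The first thing to say is that the paper does not prove this statement: Theorem~\ref{thm0} is imported verbatim from Meagher and Spiga \cite{meagher2011erdHos} and used as a black box (via Lemma~\ref{lem:2-transitivity}) to control $\pgammal{2}{q}$; the lexicographic-product machinery developed here is aimed at $\gammal{2}{q}$ and $\gl{2}{q}$, not at reproving this result. Measured against the actual source, your outline is essentially the right reconstruction of their strategy: the conjugation-closed connection set, the eigenvalue formula $\xi_\chi=\frac{1}{\chi(1)}\sum_{g\in\mathcal D}\chi(g)$, the identity $\xi_{\mathrm{St}}=-|\mathcal D|/q$ from the permutation character $\mathbf 1+\mathrm{St}$, the ratio bound giving $\alpha(\Gamma)\le q(q-1)$, and the module method (equality forces $\mathbf 1_{\mathcal F}$ into the trivial-plus-Steinberg isotypic component, which is the span of the $v_{\alpha,\beta}$) are all correct and are precisely the skeleton of the Meagher--Spiga argument. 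Your numerology also checks out: $|G|/|\Omega|=q(q-1)$, derangements are exactly the elliptic elements, and $|S_{\alpha,\beta}\cap S_{\alpha',\beta'}|=q-1$ for $\alpha\ne\alpha'$, $\beta\ne\beta'$ by sharp $3$-transitivity.

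The gap is that the two items you defer --- (a) that $-|\mathcal D|/q$ is the least eigenvalue and is attained only by $\mathrm{St}$, and (b) that the only Boolean vectors of weight $q(q-1)$ in the $(q^2+1)$-dimensional module $U$ are the $v_{\alpha,\beta}$ --- are not side conditions but the entire technical content of the cited paper. For (a) one must run through the character table of $\pgl{2}{q}$ over the elliptic classes (split according to $q$ odd or even), and for (b) ``playing the intersection numbers against membership in $U$'' is not yet an argument: Meagher and Spiga devote most of their paper to exactly this step, exploiting the structure of $U$ as a $G\times G$-module and a careful analysis of how a putative extremal coclique meets the cosets $S_{\alpha,\beta}$. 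So as written this is a correct and well-informed roadmap to the known proof, but not a proof; if you want a genuinely different route to the EKR bound alone, your parenthetical clique--coclique shortcut (a cyclic regular subgroup of order $q+1$) is complete as stated, and is in fact the argument this paper uses for the analogous bound in Lemma~\ref{lem:EKR}.
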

	
	The group $\pgl{2}{q}$ acting on $\pg{1}{q}$ is of course $2$-transitive, and the techniques used in \cite{meagher2011erdHos} very much relies on this fact. When the transitive group in question is no longer $2$-transitive, then some of the techniques used in \cite{meagher2011erdHos} do not work anymore, and so other techniques are required. {The work in this paper will explore one such techniques.}

	If $\mathcal{F} \subset G$ is intersecting, then we may assume without loss of generality that the identity element of $G$ is contained in $\mathcal{F}$, otherwise, we can just replace $\mathcal{F}$ with $g^{-1}\mathcal{F}$, for some $g\in \mathcal{F}$. Intersecting sets containing the identity are known as \itbf{basic intersecting sets}. Note that every element of a basic intersecting set must fix an element. Henceforth, we always assume that an intersecting set is a basic intersecting set. Further, we may also define the relation $\equiv_G$ on maximum intersecting sets (with the previous assumption) of $G$ by $\mathcal{F} \equiv_G \mathcal{F}^\prime$ if and only if there exists $g\in G$ such that 
	\begin{align*}
		\mathcal{F}^\prime = g\mathcal{F}g^{-1},
	\end{align*}
	for any intersecting sets $\mathcal{F},\mathcal{F}^\prime$ of maximum size. The relation $\equiv_G$ is an equivalence relation. Given $\omega\in \Omega$, by transitivity of $G\leq \sym(\Omega)$, we can always find a representative of an equivalence class of $\equiv_G$ containing an element fixing $\omega$. By abuse of terminology, we may always fix $\omega\in \Omega$, and refer to a system of distinct representatives of equivalence classes of $\equiv_G$, such that each representative contains an element fixing $\omega$, as the maximum intersecting sets of $G$. 
	For instance, in the case of $\pgl{2}{q}$ in its action on $\pg{1}{q}$, there is exactly one equivalence class of $\equiv_{\pgl{2}{q}}$, the maximum intersecting set is the stabilizer of the element $\langle e_1 \rangle\in \pg{1}{q}$.

	The \itbf{derangement graph} $\Gamma_G$ of a transitive group $G\leq \sym(\Omega)$ is the graph whose vertex set is $G$ and two elements $g,h\in G$ are adjacent in $\Gamma_G$ if and only if $h^{-1}g$ is a \itbf{derangement} or a \itbf{fixed-point-free} permutation of $G$. This is clearly the Cayley graph of $G$ with connection set equal to $\operatorname{Der}(G)$; the set of all derangements of $G$. In particular, $\Gamma_G$ is a vertex-transitive graph. The importance of the derangement graph comes from the fact that an intersecting set of $G$ is a \itbf{coclique} of $\Gamma_G$, and vice versa. Therefore, $G$ has the EKR property if and only if $\alpha(\Gamma_G) = |G_\omega|$, and $G$ has the strict-EKR property if any coclique of maximum size is a coset of a stabilizer of a point of $G.$
	
	We denote the field with $q$ elements by $\mathbb{F}_q$, and its group of unity by $\mathbb{F}_q^*$. For any prime power $q$, the general linear group $\gl{n}{q}$ acts faithfully and transitively, but not $2$-transitively\footnote{a pair of parallel vectors cannot be mapped by an invertible matrix to a pair of non-parallel vectors}, on the non-zero vectors of $\mathbb{F}_q^n$. A \itbf{Singer cycle} of $\gl{n}{q}$ is an element of order $q^n-1$, and a \itbf{Singer subgroup} is a regular group generated by a Singer cycle. A Singer subgroup of $\gl{n}{q}$ always exists, and one such elements can be obtained by embedding the group of unity of $\mathbb{F}_{q^n}$ into $\gl{n}{q}$. A Singer subgroup is a clique of size $q^n-1$ in the derangement graph of $\gl{n}{q}$, and by the \itbf{clique-coclique bound} \cite{godsil2016erdos}, one can deduce that a coclique of  $\gl{n}{q}$ has size at most $\frac{|\gl{n}{q}|}{q^n-1}$, and so $\gl{n}{q}$ has the EKR property. If we let $e_i$, for $1\leq i\leq n$, be the canonical vector of $\mathbb{F}_q^n$, then the stabilizer of $e_1$ in $\gl{n}{q}$ is an intersecting set of maximum size. It is not hard to see that there is another intersecting set of maximum size that is not conjugate to the latter. The subgroup $H$ of $\gl{n}{q}$ fixing the (affine) hyperplane $e_1+ \langle e_2,e_2,\ldots,e_n \rangle$ is also intersecting since for any $A,B\in H$, we have $A^{-1}B \left(e_1 + \langle e_2,\ldots,e_n \rangle\right) = e_1 + \langle e_2,\ldots,e_n \rangle$, which implies that $A^{-1}B \langle e_2,\ldots,e_n\rangle = \langle e_2,\ldots,e_n \rangle$ and $A^{-1}Be_1 - e_1 = (A^{-1}B-I_n)e_1 \in \langle e_2,\ldots,e_n \rangle$. Hence, $\operatorname{rank}(A^{-1}B-I_n) \leq n-1$, so $(A^{-1}B-I_n)x = 0$ always has a non-trivial solution in $x\in \mathbb{F}_q^n$. Therefore, $H$ is intersecting and $|H| = \frac{|\gl{n}{q}|}{q^n-1}$. Since these subgroups are not conjugate in $\gl{n}{q}$, $\gl{n}{q}$ does not have the strict-EKR property. In addition, there are exactly two non-conjugate subgroups of $\gl{n}{q}$, both isomorphic to $\agl{n-1}{q}$, that are intersecting of maximum size. It was recently proved in \cite{forbidden} that for $n$ large enough, an intersecting set of maximum size in the group $\gl{n}{q}$ in its action on non-zero vectors of $\mathbb{F}_q^n$ is one of these two non-conjugate subgroups. In \cite{ernst2023intersection}, Ernst and Schmidt also proved other EKR type results on $\gl{n}{q}$, and partially obtained some of the aforementioned results.
	
	Since these results characterizing the largest intersecting sets in $\gl{n}{q}$ only work for $n$ large enough, characterizing the largest intersecting sets for every $n\geq 3$ is still an open problem. When $n = 2$, it was shown in \cite{ahanjideh2022largest} that the largest intersecting sets in $\gl{2}{q}$ are the stabilizer of the canonical vector $e_1$ or the stabilizer of a hyperplane $e_2+\langle e_1\rangle$. Meagher and the second author also proved some result on the characteristic vectors of the largest intersecting sets of $\gl{2}{q}$ in \cite{meagher2022some}.
	
	In this paper, we prove an EKR type theorem for the \itbf{general semilinear group} $\gammal{2}{q}$ in its action on the non-zero vectors of $\mathbb{F}_q^2$ by \itbf{semilinear transformations}. We show that in contrast to $\gl{2}{q}$, the largest intersecting sets of $\gammal{2}{q}$ are much more complex.	
	Let $p$ be a prime number and $q = p^k$ for some integer $k\geq 1$. Recall that the \itbf{Frobenius automorphism} $\varphi \in \Aut(\mathbb{F}_{q}/\mathbb{F}_p) = \operatorname{Gal}(\mathbb{F}_q/\mathbb{F}_p)$ is the field automorphism such that $\varphi(x) = x^p$, for any $x\in \mathbb{F}_{q}$. The general semilinear group of degree $2$ is the group 
	\begin{align}
		 \gammal{2}{q} = \gl{2}{q} \rtimes \langle \varphi \rangle = \left\{ A\varphi^i: A\in \gl{2}{q} \mbox{ and } 0\leq i\leq k-1 \right\}
	\end{align}
	where for any $A\in \gl{2}{q}$, $\varphi A\varphi^{-1} = A^\varphi$ is the matrix obtained by applying $\varphi$ entrywise. The group $\gammal{2}{q}$ acts naturally on the set of non-zero vectors of $\mathbb{F}_q^2$ by defining, for any $A\varphi^i \in \gammal{2}{q}$, that
	\begin{align*}
		(A\varphi^i) v = Av^{\varphi^i},
	\end{align*}
	where $v^{\varphi^{i}}$ is the vector of $\mathbb{F}^2_q$ obtained by applying $\varphi^i$ entrywise.
	
	Let $\omega \in \mathbb{F}_q$ be a primitive element, that is, $\mathbb{F}_q^* = \langle \omega\rangle$. For any $0\leq i\leq q-2$ and $0\leq j\leq k-1$, we define 
	\begin{align*}
		V_{i,j} := \left\{ \omega^i \varphi^j \begin{bmatrix}
			1 & z\\
			0 & y
		\end{bmatrix}: z\in \mathbb{F}_q, y \in \mathbb{F}_q^* \right\}.
	\end{align*}
	For any $0\leq t\leq q-2$ and $0\leq j\leq k-1$, define
	\begin{align*}
		H_{t,j} := 
		\left\{
		\omega^i\varphi^j
		\begin{bmatrix}
			1& z\\
			0& \omega^{t+ip^{k-j}}
		\end{bmatrix}
		:
		0\leq i\leq q-2, z\in \mathbb{F}_q
		\right\}
		=\left\{
		\varphi^j
		\begin{bmatrix}
			x& z\\
			0& \omega^{t}
		\end{bmatrix}
		:
		x\in \mathbb{F}_q^*, z \in \mathbb{F}_q
		\right\}.
	\end{align*}

	The first main result of the paper is the characterization of the largest intersecting sets in odd characteristic.
	\begin{thm}
		Let $p$ be an odd prime and $q = p^k$, for some integer $k\geq 1$. The group $\gammal{2}{q}$ acting on non-zero vectors of $\mathbb{F}_q^2$ has the EKR property, and in particular any intersecting set has size at most $kq(q-1)$. In addition, the intersecting sets of maximum size in $\gammal{2}{q}$ are one of the following types.
		\begin{enumerate}[1)]
			\item Vertical types: 
			\begin{align*}
				V_{0,0} \cup V_{r_1(p-1),1} \cup \ldots \cup V_{r_j(p^j-1),j} \cup \ldots \cup V_{r_{k-1}(p^{k-1}-1),k-1},
			\end{align*}
			such that 
			\begin{align*}
				\begin{cases}
					r_0 = 0& \\
					0\leq r_j\leq \frac{q-1}{p^{\gcd(j,k)}-1}-1 &\mbox{ for }\ 0\leq j\leq k-1\\
					r_j(p^j-1) \equiv r_{j^\prime}(p^{j^\prime}-1) \pmod{p^{j-j^\prime}-1}&\mbox{ for any $0\leq j,j^\prime \leq k-1$.}
				\end{cases}
			\end{align*}
			\item Horizontal types:
			\begin{align*}
				H_{0,0} \cup H_{t_1(p-1),1} \cup \ldots \cup H_{t_j(p^j-1),i} \cup \ldots \cup H_{t_{k-1}(p^{k-1}-1),k-1},
			\end{align*}
			such that 
			\begin{align*}
				\begin{cases}
					t_0 = 0 &\\
					0\leq t_j\leq \frac{q-1}{p^{\gcd(j,k)}-1}-1 &\mbox{ for }\ 0\leq j\leq k-1\\
					t_j(p^j-1) \equiv t_{j^\prime}(p^{j^\prime}-1) \pmod{p^{j-j^\prime}-1}&\mbox{ for any $0\leq j,j^\prime \leq k-1$.}
				\end{cases}
			\end{align*}
		\end{enumerate}
		\label{thm1}
	\end{thm}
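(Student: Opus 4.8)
The plan is to read the derangement graph $\Gamma:=\Gamma_{\gammal{2}{q}}$ through the coset decomposition $\gammal{2}{q}=\bigsqcup_{\ell=0}^{k-1}\gl{2}{q}\varphi^\ell$, reducing everything to the already–understood EKR theory of $\gl{2}{q}$ together with a compatibility analysis between the resulting ``layers'', which for odd $p$ is governed by a lexicographic product. A direct computation — using that conjugation by a power of $\varphi$ preserves the number of fixed points of a permutation — shows that $A\varphi^i$ and $B\varphi^j$ are adjacent in $\Gamma$ if and only if $(A^{-1}B)\varphi^{j-i}$ is a derangement. Consequently, for a basic intersecting set $\mathcal F$ each layer $\mathcal F_\ell:=\{A\in\gl{2}{q}:A\varphi^\ell\in\mathcal F\}$ is a coclique of $\Gamma_{\gl{2}{q}}$, so $|\mathcal F|=\sum_\ell|\mathcal F_\ell|\le k\,\alpha(\Gamma_{\gl{2}{q}})=kq(q-1)$ since $\gl{2}{q}$ has the EKR property. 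If equality holds, every $\mathcal F_\ell$ is a maximum coclique of $\Gamma_{\gl{2}{q}}$; in particular $I\in\mathcal F_0$, and by \cite{ahanjideh2022largest} the set $\mathcal F_0$ is either $\operatorname{Stab}(v)$ for some non-zero $v$ or a hyperplane–stabilizer subgroup.

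Conjugating $\mathcal F$ by an element of $\gl{2}{q}$ — a graph automorphism of $\Gamma$ that fixes each layer setwise and acts on layer $0$ by ordinary conjugation — we may assume $\mathcal F_0=\operatorname{Stab}(e_1)=\bigl\{\left[\begin{smallmatrix}1&z\\0&y\end{smallmatrix}\right]\bigr\}$ (the \emph{vertical} case) or $\mathcal F_0=H_{0,0}=\bigl\{\left[\begin{smallmatrix}x&z\\0&1\end{smallmatrix}\right]\bigr\}$ (the \emph{horizontal} case); the contragredient anti-automorphism interchanges these two subgroups and carries the family $\{V_{i,j}\}$ onto the family $\{H_{t,j}\}$, so it suffices to treat the vertical case. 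Now fix $\ell\ge 1$ and let $C_\ell$ be the set of $B\in\gl{2}{q}$ such that $B\varphi^\ell$ is non-adjacent to every element of $\mathcal F_0$; then $\mathcal F_\ell\subseteq C_\ell$. Here one uses the elementary fact that the family of upper-triangular semilinear maps $\left[\begin{smallmatrix}a&w\\0&u\end{smallmatrix}\right]\varphi^s$, over all $w\in\mathbb F_q$ and $u\in\mathbb F_q^*$, consists entirely of non-derangements exactly when $a$ is a $(p^{\gcd(s,k)}-1)$-th power in $\mathbb F_q^*$; combined with an averaging count over the coset $\{E\in\gl{2}{q}:Ev_0=e_1\}$ showing that a derangement appears whenever $v_0\notin\langle e_1\rangle$, this yields $C_\ell=\bigsqcup_i V_{i,\ell}$ with $i$ running over the multiples of $g_\ell:=\gcd(p^\ell-1,q-1)=p^{\gcd(\ell,k)}-1$ modulo $q-1$, and moreover that the subgraph of $\Gamma$ induced on $C_\ell$ is isomorphic to the lexicographic product $Z_\ell[\overline{K_q}]$, where $Z_\ell$ is the graph on $\{(i,y):g_\ell\mid i,\ y\in\mathbb F_q^*\}$ with $(i,y)\sim(i',y')$ iff $i\ne i'$ and $y\ne y'$ — that is, the complement of a rook's graph on a $\frac{q-1}{g_\ell}\times(q-1)$ grid. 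Since $p$ is odd we have $g_\ell\ge p-1\ge 2$, so this grid is strictly non-square, hence $\alpha(Z_\ell)=q-1$ is attained only by a single ``row'' $\{(i_\ell,y):y\in\mathbb F_q^*\}$; as $\alpha(Z_\ell[\overline{K_q}])=q\,\alpha(Z_\ell)=q(q-1)=|\mathcal F_\ell|$ and every maximum coclique of a lexicographic product with an edgeless second factor is the full blow-up of a maximum coclique of the base, we conclude $\mathcal F_\ell=V_{i_\ell,\ell}$ for some $i_\ell$ with $g_\ell\mid i_\ell$.

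It then remains to decide which tuples $(i_\ell)_\ell$, with $i_0=0$ and $g_\ell\mid i_\ell$, make $\bigcup_\ell V_{i_\ell,\ell}$ intersecting. For $j\ne j'$ the blocks $V_{i_j,j}$ and $V_{i_{j'},j'}$ are mutually non-adjacent iff every semilinear map $\left[\begin{smallmatrix}\omega^{i_j-i_{j'}}&w\\0&u\end{smallmatrix}\right]\varphi^{j-j'}$ (with $w\in\mathbb F_q$, $u\in\mathbb F_q^*$ arbitrary) has a non-zero fixed vector, which by the fact above happens precisely when $i_j\equiv i_{j'}\pmod{p^{\gcd(j-j',k)}-1}$. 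Writing $i_\ell=r_\ell(p^\ell-1)\bmod(q-1)$ — as $r_\ell$ runs over $0,1,\dots,\frac{q-1}{g_\ell}-1$ this hits each multiple of $g_\ell$ exactly once, which is what the bound on $r_\ell$ records — turns these conditions into the congruence system in the statement, giving the vertical types; the horizontal types follow by applying the contragredient duality. Conversely, every union of the stated form is easily checked to be an intersecting set of size $kq(q-1)$, which completes the characterization (stated, as usual, up to the equivalence $\equiv_{\gammal{2}{q}}$).

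The heart of the argument, and the step I expect to be most delicate, is the identification of $C_\ell$ and of the lexicographic-product structure on it: one must rule out that a maximum layer $\mathcal F_\ell$ arises from a hyperplane-stabilizer coset, or from $\operatorname{Stab}(v)$ with $v\notin\langle e_1\rangle$, which requires a careful case analysis of the non-zero fixed vectors of triangular semilinear transformations and the averaging argument forcing a derangement as soon as the relevant vector leaves $\langle e_1\rangle$. This is also the only place where the hypothesis that $p$ is odd is essential: it guarantees $g_\ell\ge 2$, so that the rook grid underlying $Z_\ell$ is non-square and its extremal coclique — hence the shape of each layer $\mathcal F_\ell$ — is forced to be a single ``row''.
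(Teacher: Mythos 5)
Your overall strategy is sound and arrives at the same block structure $V_{i,j}$/$H_{t,j}$ with the same congruence compatibility conditions, but you get there by a genuinely different reduction. The paper proves the bound $kq(q-1)$ by exhibiting a Singer subgroup of $\gl{2}{q}$ as a regular subgroup of $\gammal{2}{q}$ and applying clique--coclique, and then forces $\mathcal{F}\subseteq\mathcal{H}_q$ (all upper-triangular elements) in one stroke by projecting to $\pgammal{2}{q}$ and invoking the Meagher--Spiga strict-EKR theorem for $\pgl{2}{q}$ together with the lemma that strict-EKR passes from a $2$-transitive subgroup to an overgroup. You instead bound layer by layer using the EKR property of $\gl{2}{q}$, pin down $\mathcal{F}_0$ via the Ahanjideh classification, and then force the higher layers into the triangular cosets by your averaging count. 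That averaging step does work and is worth writing out: for $g=B\varphi^\ell$ with $B\langle e_1\rangle\neq\langle e_1\rangle$, summing the number of nonzero fixed vectors of $A^{-1}g$ over $A\in\operatorname{Stab}(e_1)$ gives at most $(q-1)^2<q(q-1)$, so some $A^{-1}g$ is a derangement. From there your rook's-graph description of $C_\ell$, the identification $\alpha(Z_\ell[\overline{K_q}])=q\,\alpha(Z_\ell)$ with full blow-ups as the only maximum cocliques, and the parametrization $i_\ell=r_\ell(p^\ell-1)$ all match the paper's Lemmas on $V_{i,j}\cup V_{i',j'}$, including the correct role of odd $p$ via $p^{\gcd(\ell,k)}-1\geq 2$.

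The one step I would not accept as stated is the dispatch of the horizontal case by ``contragredient duality.'' For a \emph{linear} map, $A$ and $A^{-t}$ have the same fixed-point behaviour, but for semilinear maps this is not automatic: writing $A^{-t}=\det(A)^{-1}JAJ^{-1}$ with $J=\left[\begin{smallmatrix}0&1\\-1&0\end{smallmatrix}\right]$ shows that $A^{-t}\varphi^j$ is conjugate to $\det(A)^{-1}A\varphi^j$, and scalar multiples of a semilinear map do \emph{not} in general have the same number of fixed vectors (e.g.\ $\varphi^j$ versus $c\varphi^j$ for $c$ outside $(\mathbb{F}_q^*)^{p^{\gcd(j,k)}-1}$). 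So the claim that $A\varphi^j\mapsto A^{-t}\varphi^j$ preserves the derangement set of $\gammal{2}{q}$, and hence induces an automorphism of $\Gamma_q$ carrying the $V$-family to the $H$-family, needs a genuine proof. The paper avoids this entirely by redoing the inter-block adjacency computation on the horizontal side (its Lemma~\ref{lem:across-horizontal}), and the cheapest repair of your argument is the same: run your $C_\ell$ analysis once more with $\mathcal{F}_0=H_{0,0}$, which is a short variant of the computation you already did.
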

	
	{From this result, we can see that an intersecting set of maximum size is not necessarily a subgroup. For instance if $q = 3^3$, by Theorem~\ref{thm1} the set $V_{0,0} \cup V_{2,1}\cup V_{16,2}$ is intersecting of maximum size, but not a subgroup. In contrast, we can show that if $q = p^2$, then the largest intersecting sets are always subgroups.
	\begin{cor}
		If $p$ is an odd prime, then an intersecting set of maximum size of $\gammal{2}{p^2}$ is the stabilizer of $e_1$ or the stabilizer of the hyperplane $e_2+\langle e_1\rangle$. \label{cor:two-layers}
	\end{cor}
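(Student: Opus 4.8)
The plan is to specialise Theorem~\ref{thm1} to $k=2$ (so that $q=p^2$) and to show that its explicit list collapses, up to conjugacy, onto the two stabilisers in the statement. When $k=2$ the only index besides $j=0$ is $j=1$; since $\gcd(1,2)=1$ the parameter bound becomes $0\le r_1\le\frac{q-1}{p-1}-1=p$ (and likewise $0\le t_1\le p$), while the one instance of the congruence condition, $r_1(p-1)\equiv r_0(p^0-1)\pmod{p^{1}-1}$, is just $r_1(p-1)\equiv 0\pmod{p-1}$ and hence holds automatically. So by Theorem~\ref{thm1} every maximum intersecting set of $\gammal{2}{q}$ is conjugate to some $V_{0,0}\cup V_{r_1(p-1),1}$ or $H_{0,0}\cup H_{t_1(p-1),1}$ with $0\le r_1,t_1\le p$. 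A direct computation then handles the base cases: because $e_1$ and the affine line $e_2+\langle e_1\rangle$ are fixed by $\varphi$, one gets $V_{0,0}=\stab{e_1}{\gl{2}{q}}$, hence $V_{0,0}\cup V_{0,1}=\stab{e_1}{\gammal{2}{q}}$, and symmetrically $H_{0,0}\cup H_{0,1}=\stab{e_2+\langle e_1\rangle}{\gammal{2}{q}}$; in each case both sides have order $2q(q-1)=kq(q-1)$.

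The crux is to show that each $V_{0,0}\cup V_{r_1(p-1),1}$ is $\gammal{2}{q}$-conjugate to $V_{0,0}\cup V_{0,1}$, and each $H_{0,0}\cup H_{t_1(p-1),1}$ to $H_{0,0}\cup H_{0,1}$. I would conjugate the vertical type by $D=\operatorname{diag}(\omega^{r_1},1)$ and the horizontal type by $D=\operatorname{diag}(1,\omega^{-t_1})$. Moving $\varphi$ past $D$ and past scalars via $D\varphi=\varphi D^{\varphi^{-1}}$ and $\lambda\varphi=\varphi\lambda^{\varphi^{-1}}$ (with $\varphi^{-1}=\varphi$ since $k=2$), one checks that conjugation by a diagonal matrix fixes $V_{0,0}$ (resp.\ $H_{0,0}$) and multiplies the scalar $\omega^{r_1(p-1)}$ in front of the $\varphi$-part of $V_{\cdot,1}$ (resp.\ the corner entry $\omega^{t_1(p-1)}$ of $H_{\cdot,1}$) by a factor of the form $b^{p-1}$, $b\in\mathbb{F}_q^{*}$. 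Since $\mathbb{F}_q^{*}=\mathbb{F}_{p^2}^{*}$ is cyclic of order $(p-1)(p+1)$, the set $\{b^{p-1}:b\in\mathbb{F}_q^{*}\}$ is the subgroup of order $p+1$, and it contains $\omega^{r_1(p-1)}=(\omega^{r_1})^{p-1}$ and $\omega^{t_1(p-1)}=(\omega^{t_1})^{p-1}$; so an appropriate $b$ makes the product trivial, yielding $V_{0,1}$ (resp.\ $H_{0,1}$). (Equivalently, one checks directly that $V_{0,0}\cup V_{r_1(p-1),1}$ is the stabiliser of some scalar multiple $\omega^{c}e_1$ and $H_{0,0}\cup H_{t_1(p-1),1}$ the stabiliser of some translate $\omega^{c'}e_2+\langle e_1\rangle$, and then uses transitivity of $\gammal{2}{q}$ on non-zero vectors, resp.\ on these hyperplanes.)

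It remains to check that $\stab{e_1}{\gammal{2}{q}}$ and $\stab{e_2+\langle e_1\rangle}{\gammal{2}{q}}$ are not $\gammal{2}{q}$-conjugate. The former fixes the non-zero vector $e_1$, while the latter contains $H_{0,0}$, that is, all matrices $\left[\begin{smallmatrix}x & z\\ 0 & 1\end{smallmatrix}\right]$ with $x\in\mathbb{F}_q^{*}$ and $z\in\mathbb{F}_q$, and these have no common non-zero fixed vector once $q=p^2>2$: fixing $x$ and varying $z$ forces the first coordinate of such a vector to vanish, after which varying $x$ forces the second coordinate to vanish. Since the property of fixing some non-zero vector is preserved under conjugation, the two stabilisers lie in distinct $\equiv_{\gammal{2}{q}}$-classes, and combining this with Theorem~\ref{thm1} and the two collapses above proves Corollary~\ref{cor:two-layers}.

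Everything here is bookkeeping built on Theorem~\ref{thm1}. The points needing real care are verifying that the congruence conditions of Theorem~\ref{thm1} become vacuous when $k=2$ — so that no exotic parameter values survive — and tracking the scalar factor $b^{p-1}$ correctly through the diagonal conjugation, in particular through the fact that $\varphi$ does not commute with scalar matrices in $\gammal{2}{q}$, so that a single conjugation lands precisely on $V_{0,1}$, resp.\ $H_{0,1}$. I do not expect a genuine obstacle: the corollary is essentially the $k=2$ shadow of Theorem~\ref{thm1}.
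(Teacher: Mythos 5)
Correct, and essentially the same approach as the paper: both specialize Theorem~\ref{thm1} to $k=2$ (where the congruence conditions are vacuous) and then observe that $V_{0,0}\cup V_{r_1(p-1),1}$ and $H_{0,0}\cup H_{t_1(p-1),1}$ are conjugate to the stabilizers of $e_1$ and of $e_2+\langle e_1\rangle$ respectively. You merely make explicit the diagonal conjugating element and the non-conjugacy of the two stabilizers, details the paper leaves implicit.
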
}
	
	{In even characteristic, the largest intersecting sets of $\gammal{2}{q}$ become much more complex, but we characterize them nevertheless. We state these results in Theorem~\ref{thm3} and Theorem~\ref{thm4} in Section~\ref{sect:cocliques-even}.
	}
	
	A \itbf{maximum intersecting set }is an intersecting set of largest possible size. An intersecting set is called \itbf{maximal} if it is not properly contained in another intersecting set. A \itbf{Hilton-Milner type set} in $G\leq \sym(\Omega)$ is a maximal intersecting set that is not a maximum intersecting set. In \cite{ahanjideh2022largest} and \cite{maleki2021no}, it was proved that there are no Hilton-Milner type intersecting sets in $\gl{2}{q}$. Therefore, any intersecting sets of $\gl{2}{q}$ is contained in a maximum intersecting set, and in particular, a maximal intersecting set must be a maximum one. The proofs are given in \cite{ahanjideh2022largest} and \cite{maleki2021no}, however, we give a proof that is much simpler. 
	\begin{thm}
		Any maximal intersecting set of $\gl{2}{q}$ is maximum. In particular, there are no Hilton-Milner type sets in $\gl{2}{q}$.\label{thm2}
	\end{thm}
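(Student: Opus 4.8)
The plan is to combine the clique--coclique bound with an explicit description of when two elements of $\gl{2}{q}$ agree on a non-zero vector of $\mathbb{F}_q^2$. As already noted in the text, a Singer subgroup is a clique of size $q^2-1$ in the derangement graph $\Gamma_{\gl{2}{q}}$, so $\alpha\big(\Gamma_{\gl{2}{q}}\big)=q(q-1)$. Let $\mathcal{F}$ be a maximal intersecting set of $\gl{2}{q}$. Maximality is preserved under left translation, so we may assume $I\in\mathcal{F}$; then every $g\in\mathcal{F}$ has $1$ as an eigenvalue, i.e.\ $g$ fixes every non-zero vector of the line $\ker(g-I)$. Since $\{I\}$ is properly contained in the stabiliser of $e_1$, it is not maximal, so $\mathcal{F}\setminus\{I\}\neq\varnothing$; and for $g\in\mathcal{F}\setminus\{I\}$ the matrix $g-I$ is non-zero and singular, hence of rank exactly $1$. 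To such a $g$ we attach two points of $\pg{1}{q}$: its \emph{fixed line} $\ell(g):=\ker(g-I)$ and its \emph{direction} $d(g):=\operatorname{im}(g-I)$.

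The crux is the following elementary lemma: if $g_1,g_2\in\gl{2}{q}\setminus\{I\}$ both have $1$ as an eigenvalue, then $g_1$ and $g_2$ agree on some non-zero vector if and only if $\ell(g_1)=\ell(g_2)$ or $d(g_1)=d(g_2)$. Indeed, $g_1$ and $g_2$ agree on a non-zero vector precisely when $g_1-g_2=(g_1-I)-(g_2-I)$ is singular. Writing $g_i-I=u_i\xi_i^{\mathsf T}$ with $u_i,\xi_i\in\mathbb{F}_q^2\setminus\{0\}$, one has $g_1-g_2=u_1\xi_1^{\mathsf T}-u_2\xi_2^{\mathsf T}=M_1M_2$, where $M_1$ is the matrix with columns $u_1,u_2$ and $M_2$ is the matrix with rows $\xi_1^{\mathsf T},-\xi_2^{\mathsf T}$; hence $\det(g_1-g_2)=\pm\det(M_1)\det(M_2)$ vanishes exactly when $\{u_1,u_2\}$ is linearly dependent (equivalently $d(g_1)=d(g_2)$) or $\{\xi_1,\xi_2\}$ is linearly dependent (equivalently $\ker(g_1-I)=\xi_1^{\perp}=\xi_2^{\perp}=\ker(g_2-I)$, i.e.\ $\ell(g_1)=\ell(g_2)$). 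The reverse implications are immediate.

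Now apply the lemma inside $\mathcal{F}$. Since $\mathcal{F}$ is intersecting, any two elements of $\mathcal{F}\setminus\{I\}$ agree on a non-zero vector, so the points $\big(\ell(g),d(g)\big)\in\pg{1}{q}\times\pg{1}{q}$, $g\in\mathcal{F}\setminus\{I\}$, pairwise agree in at least one coordinate. An elementary argument then shows that such a set of points of a grid $X\times X$ lies in a single row or a single column: if two of them differ in the first coordinate they must agree in the second, and then every point agreeing with both must carry that same second coordinate. So either all the $\ell(g)$ equal a fixed line $\ell_0$, or all the $d(g)$ equal a fixed line $\langle u_0\rangle$. In the first case every element of $\mathcal{F}$ fixes $\ell_0$ pointwise, so $\mathcal{F}$ is contained in the stabiliser of a non-zero vector of $\ell_0$, a conjugate of the stabiliser of $e_1$. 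In the second case $\operatorname{im}(g-I)\subseteq\langle u_0\rangle$ for every $g\in\mathcal{F}$, so $\mathcal{F}$ is contained in a conjugate of the stabiliser of the hyperplane $e_2+\langle e_1\rangle$. Both of these subgroups are intersecting sets of size $q(q-1)=\alpha\big(\Gamma_{\gl{2}{q}}\big)$; by maximality $\mathcal{F}$ equals one of them, and in particular $\mathcal{F}$ is a maximum intersecting set, as required.

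The single genuinely substantive point is the rank-$1$ determinant identity of the second paragraph; the ``row-or-column'' dichotomy and the degenerate cases ($\mathcal{F}=\{I\}$, and small $q$) are routine. I would also present the conclusion through the lexicographic-product lens used elsewhere in the paper: the induced subgraph of $\Gamma_{\gl{2}{q}}$ on the Borel subgroup $B$ of upper-triangular matrices is the lexicographic product $Y\big[\overline{K_q}\big]$, where $Y$ is the graph on $\mathbb{F}_q^{*}\times\mathbb{F}_q^{*}$ in which two pairs are adjacent iff they differ in both coordinates; the maximal cocliques of $Y$ are exactly its rows and columns, so all maximal cocliques of $\Gamma_{\gl{2}{q}}[B]$ are maximum and correspond to cosets of the stabiliser of $e_1$ or of the stabiliser of $e_2+\langle e_1\rangle$. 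The argument above shows precisely that every maximal intersecting set of $\gl{2}{q}$ lies in a conjugate of such a $B$, so Theorem~\ref{thm2} reduces to this lexicographic-product computation.
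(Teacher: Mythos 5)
Your argument is correct, and its core is genuinely different from the paper's. The paper proceeds by showing (Lemma~\ref{lem:base}) that any three elements of an intersecting set share a common invariant line in $\pg{1}{q}$, bootstrapping this to the whole family (Lemma~\ref{lem:fix-line}), placing $\mathcal{F}$ inside the Borel subgroup $V_{0,0}\cup\cdots\cup V_{q-2,0}$, and then reading off the two maximal cocliques from the lexicographic-product description $\widetilde{K}_{q-1}^{q-1}\left[\overline{K_{q}}\right]$ of Corollary~\ref{cor:one-layer}. You instead attach to each $g\neq I$ with eigenvalue $1$ the pair $\bigl(\ker(g-I),\operatorname{im}(g-I)\bigr)$ and prove in one stroke that two such elements agree on a non-zero vector if and only if they share a kernel line or share an image line; the rank-one factorization $\det(g_1-g_2)=\det(M_1)\det(M_2)$ is valid and is exactly the right way to see this. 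The grid (``row-or-column'') argument then forces a common kernel or a common image, landing $\mathcal{F}$ directly in a conjugate of the stabilizer of $e_1$ or of $e_2+\langle e_1\rangle$, each of size $q(q-1)=\alpha\bigl(\Gamma_{\gl{2}{q}}\bigr)$ by the clique--coclique bound. What your route buys: it treats the two non-conjugate maximum families symmetrically from the outset, dispenses with both the three-element lemma and the reduction to the Borel subgroup, and explains the kernel/image dichotomy structurally rather than as the outcome of a coclique computation in a product graph. What the paper's route buys: the Borel reduction and the lexicographic-product corollary are machinery already built for the $\gammal{2}{q}$ analysis, so Theorem~\ref{thm2} falls out of results that are reused elsewhere. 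The only points worth making fully explicit in a final write-up are the degenerate case $|\mathcal{F}\setminus\{I\}|\leq 1$ (which you do address) and the one-line check that $\{g:\operatorname{im}(g-I)\subseteq\langle u_0\rangle\}$ is itself an intersecting subgroup, so that maximality forces equality.
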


	The method that we use to prove all the aforementioned results rely on finding a subgraph which is a lexicographic product in the derangement graph, similar to what was used in \cite{behajaina2022intersection,maleki2023erd}. We also discuss the potential extension of this method for other groups.

	This paper is organized as follows. In Section~\ref{sect:prelim}, we establish some notations and background results. In Section~\ref{sect:nice-graphs}, we determine some lexicographic product in the derangement graph of $\gammal{2}{q}$ and we use these in Section~\ref{sect:cocliques-odd} and Section~\ref{sect:cocliques-even} to determine the maximum cocliques. In Section~\ref{sect:HM}, we give a proof of Theorem~\ref{thm3} and Theorem~\ref{thm4} for the characterization in even characteristic.
		
	\section{Preliminary reduction}\label{sect:prelim}
	Let $Z$ be the center of $\gl{2}{q}$. Recall that $\pgl{2}{q} = \gl{2}{q}/Z$ and $\pgammal{2}{q} = \gammal{2}{q}/Z$. Also, note that $\pgammal{2}{q} = \pgl{2}{q} \rtimes \langle \varphi\rangle$, where $\varphi$ is the Frobenius automorphism of $\mathbb{F}_q$. In addition, both $\pgl{2}{q}$ and $\pgammal{2}{q}$ act $2$-transitively on the projective line $\pg{1}{q}$.
	
	We recall the following lemma.
	\begin{lem}[Theorem~14.7.4 in \cite{godsil2016erdos}]
		Let $G\leq \sym(\Omega)$ be transitive and $H\leq G$ be a $2$-transitive group. If $H$ has the strict EKR property, then so does $G$. 
		\label{lem:2-transitivity}
	\end{lem}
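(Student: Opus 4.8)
The plan is to split an intersecting set of $G$ across the left cosets of $H$, bound each piece by the EKR property of $H$, and then deduce strictness from a counting argument using the $2$-transitivity of $H$. First I would record two routine facts: since $G$ and $H$ are transitive, $|G_\omega| = |G|/|\Omega|$ and $|H_\omega| = |H|/|\Omega|$; and a left translate $x\mathcal{F}$ of an intersecting set $\mathcal{F}$ is again intersecting, because $(xa)(\omega) = (xb)(\omega)$ if and only if $a(\omega) = b(\omega)$. Consequently, for any intersecting $\mathcal{F} \subseteq G$ and any left coset $gH$, the set $g^{-1}\mathcal{F} \cap H$ is an intersecting subset of $H$, so $|\mathcal{F} \cap gH| = |g^{-1}\mathcal{F} \cap H| \le |H|/|\Omega|$ by the EKR property of $H$ (which is part of the hypothesis). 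Summing over the $|G|/|H|$ left cosets of $H$ yields $|\mathcal{F}| \le |G|/|\Omega| = |G_\omega|$, so $G$ has the EKR property.

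For strictness, suppose $\mathcal{F}$ is intersecting with $|\mathcal{F}| = |G|/|\Omega|$; after a left translation I may assume $1 \in \mathcal{F}$. Equality in the displayed bound forces $|\mathcal{F} \cap H| = |H|/|\Omega|$, so $\mathcal{F} \cap H$ is a maximum intersecting set of $H$ containing $1$; by the strict-EKR property of $H$ it is a coset of a point stabilizer, and containing $1$ it must equal $H_\alpha$ for some $\alpha \in \Omega$. It then suffices to show that every $g \in \mathcal{F}$ fixes $\alpha$, for then $\mathcal{F} \subseteq G_\alpha$ and, the two sets having the same size, $\mathcal{F} = G_\alpha$.

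The main point --- and the step where $2$-transitivity is essential --- is that last claim. Fix $g \in \mathcal{F}$. Since $H_\alpha = \mathcal{F} \cap H \subseteq \mathcal{F}$, the element $g$ agrees with every $h \in H_\alpha$ on some point of $\Omega$. If $g(\alpha) \ne \alpha$, then $g$ and $h$ cannot agree at $\alpha$, so for each $h \in H_\alpha$ there is $\omega_h \ne \alpha$ with $h(\omega_h) = g(\omega_h)$, and this common value is $\ne \alpha$ (otherwise $h(\omega_h) = \alpha = h(\alpha)$ forces $\omega_h = \alpha$). Hence the pair $(\omega_h, g(\omega_h))$ lies in $P := \{(\omega, g(\omega)) : \omega \in \Omega \setminus \{\alpha, g^{-1}(\alpha)\}\}$, a set of size $|\Omega| - 2$. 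By $2$-transitivity of $H$, for each fixed pair $(\omega, \mu)$ with $\omega, \mu \ne \alpha$ there are exactly $|H|/(|\Omega|(|\Omega| - 1))$ elements of $H_\alpha$ sending $\omega$ to $\mu$. A union bound over $P$ then gives that the number of $h \in H_\alpha$ agreeing with $g$ somewhere off $\alpha$ is at most $(|\Omega| - 2)\,|H|/(|\Omega|(|\Omega| - 1)) < |H|/|\Omega| = |H_\alpha|$, contradicting the fact that every element of $H_\alpha$ has this property (the case $|\Omega| \le 1$ being vacuous). Therefore $g(\alpha) = \alpha$, as claimed, and $G$ has the strict-EKR property.

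I expect the coset decomposition, the translation facts, and the orbit-counting to be routine; the delicate step will be the final inequality --- specifically, verifying that every $h \in H_\alpha$ is genuinely captured by some pair of $P$, and that the pair-stabilizer count $|H|/(|\Omega|(|\Omega|-1))$ is applied with the hypotheses ($\omega \ne \alpha$ and $\mu \ne \alpha$) actually met. Since the statement coincides with Theorem~14.7.4 of \cite{godsil2016erdos}, one could instead just invoke it, but the argument sketched here is short and self-contained.
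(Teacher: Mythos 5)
Your argument is correct, but there is nothing in the paper to compare it against: the authors do not prove Lemma~\ref{lem:2-transitivity} at all, they simply quote it as Theorem~14.7.4 of \cite{godsil2016erdos} and use it as a black box. Your proof is a valid, self-contained substitute. The coset decomposition $|\mathcal{F}| = \sum_{gH} |g^{-1}\mathcal{F}\cap H| \le [G:H]\cdot|H|/|\Omega|$ is sound (each $g^{-1}\mathcal{F}\cap H$ is intersecting in $H$, and transitivity of $H$ gives $|H_\omega|=|H|/|\Omega|$), and equality does force $\mathcal{F}\cap H$ to be a maximum intersecting set of $H$, hence $H_\alpha$ once $1\in\mathcal{F}$. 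The delicate step you flagged checks out: for $g\in\mathcal{F}$ with $g(\alpha)\ne\alpha$, every $h\in H_\alpha$ agrees with $g$ at some $\omega_h\ne\alpha$ with common value $h(\omega_h)=g(\omega_h)\ne\alpha$ (else injectivity of $h$ would force $\omega_h=\alpha$), so the pair $(\omega_h,g(\omega_h))$ avoids $\alpha$ in both coordinates and $\omega_h\ne g^{-1}(\alpha)$; the orbit--stabilizer count $|\{h\in H_\alpha: h(\omega)=\mu\}| = |H|/(|\Omega|(|\Omega|-1))$ is applied only to pairs with $\omega,\mu\ne\alpha$, as required, and the union bound over the $|\Omega|-2$ admissible pairs gives $|H_\alpha|\le(|\Omega|-2)|H|/(|\Omega|(|\Omega|-1)) < |H|/|\Omega|$, a contradiction. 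Hence $\mathcal{F}\subseteq G_\alpha$ and equality of sizes gives $\mathcal{F}=G_\alpha$. This is essentially the standard argument behind the cited theorem; the only stylistic remark is that you could shorten the middle by observing directly that the set of $h\in H$ agreeing with $g$ somewhere is what is being covered, but nothing needs to be repaired.
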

	
	By Theorem~\ref{thm0}, the group $\pgl{2}{q}$ has the EKR and strict-EKR property. Combining this fact with Lemma~\ref{lem:2-transitivity}, we know that $\pgammal{2}{q}$ also has the strict-EKR property. By definition of $\equiv_{\pgl{2}{q}}$, the maximum intersecting sets can be viewed as the elements of a system of distinct representatives of the corresponding equivalence classes, each containing an element fixing $\langle e_1\rangle$. Since $\pgl{2}{q}$ has the strict-EKR property, there is only one equivalence class of $\equiv_{\pgl{2}{q}}$. Let $\mathfrak{F}$ be the maximum intersecting set of $\pgammal{2}{q}$. We can see that $\mathfrak{F}$ is the stabilizer of $\langle e_1\rangle \in \pg{1}{q}$. Therefore,  
	\begin{align}
		\mathfrak{F} = 
		\left\{ \begin{bmatrix}
			1 & z\\
			0 & y
		\end{bmatrix}\varphi^iZ : \ z\in \mathbb{F}_q,\ y\in \mathbb{F}_q^*,\ 0\leq i\leq k-1 \right\} .\label{eq:stab-pgammal}
	\end{align}
	
	Now we turn our focus to the intersecting sets of $\gammal{2}{q}$.
	\begin{lem}
		The group $\gammal{2}{q}$ has the EKR property.\label{lem:EKR}
	\end{lem}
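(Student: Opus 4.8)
The plan is to invoke the clique--coclique bound, exactly along the lines sketched in the introduction for $\gl{n}{q}$. First I would recall that $\gl{2}{q}$ contains a Singer subgroup $S$: a cyclic subgroup of order $q^2-1$ acting regularly on the $q^2-1$ non-zero vectors of $\mathbb{F}_q^2$ (for instance, the image of $\mathbb{F}_{q^2}^*$ under a fixed embedding $\mathbb{F}_{q^2}^* \hookrightarrow \gl{2}{q}$ obtained by viewing $\mathbb{F}_{q^2}$ as a $2$-dimensional $\mathbb{F}_q$-vector space).

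The key point is that $\gl{2}{q} \leq \gammal{2}{q}$ and the action of $\gammal{2}{q}$ on non-zero vectors restricts on $\gl{2}{q}$ to the ordinary linear action; hence any non-identity element of $S$, being fixed-point-free on the non-zero vectors by regularity, is a derangement not only in $\gl{2}{q}$ but also in $\gammal{2}{q}$. Therefore $S$ is a clique of size $q^2-1$ in the derangement graph $\Gamma_{\gammal{2}{q}}$. Since $\Gamma_{\gammal{2}{q}}$ is a Cayley graph, hence vertex-transitive, the clique--coclique bound yields
\[
\alpha\!\left(\Gamma_{\gammal{2}{q}}\right) \;\leq\; \frac{|\gammal{2}{q}|}{q^2-1} \;=\; \frac{k\,|\gl{2}{q}|}{q^2-1} \;=\; \frac{k(q^2-1)q(q-1)}{q^2-1} \;=\; kq(q-1).
\]
Finally I would observe that the stabilizer in $\gammal{2}{q}$ of a non-zero vector has size $\frac{|\gammal{2}{q}|}{q^2-1} = kq(q-1)$ and is itself an intersecting set, so the bound is attained; as intersecting sets of $\gammal{2}{q}$ are precisely the cocliques of $\Gamma_{\gammal{2}{q}}$, this gives $\alpha(\Gamma_{\gammal{2}{q}}) = kq(q-1) = |G_\omega|$, i.e.\ the EKR property.

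There is essentially no obstacle in this argument: the only things to verify are that a non-identity element of a regular subgroup has no fixed point (immediate) and the order computation $|\gammal{2}{q}| = k(q^2-1)q(q-1)$. This lemma is the easy half of the story, establishing only the numerical bound on the size of an intersecting set; the substantive work, carried out in the later sections via the lexicographic-product structure inside $\Gamma_{\gammal{2}{q}}$, is the classification of the intersecting sets that meet this bound.
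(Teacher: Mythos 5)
Your proof is correct and is essentially identical to the paper's: both exhibit a Singer subgroup of $\gl{2}{q}$ as a clique of size $q^2-1$ in the derangement graph of $\gammal{2}{q}$, apply the clique--coclique bound, and note that the stabilizer of a non-zero vector attains the resulting bound $kq(q-1)$. Your write-up simply spells out the fixed-point-free verification and the order computation that the paper leaves implicit.
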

	\begin{proof}
		Let $\Gamma_q$ be the derangement graph of $\gammal{2}{q}$. Note that a Singer cycle in $\gl{2}{q}$ is also a regular subgroup of $\gammal{2}{q}$. Therefore, by the clique-coclique bound, we know that $\alpha(\Gamma_q) \leq \frac{|\gammal{2}{q}|}{q^2-1}$, which is always attained by the stabilizer of a point.
	\end{proof}
	
	If $\mathcal{F} \subset \gammal{2}{q}$ is an intersecting set of size $\frac{|\gammal{2}{q}|}{q^2-1}$, then note that by the canonical surjective homomorphism $\gammal{2}{q} \to \pgammal{2}{q}$ such that $g\mapsto gZ$, we know that $\overline{\mathcal{F}} = \left\{ gZ : g\in \mathcal{F} \right\}$ is also an intersecting set of $\pgammal{2}{q}$ in its action on the $\pg{1}{q}$. In addition, for any $gZ,g^\prime Z \in \overline{\mathcal{F}}$, we have that $gZ = g^\prime Z$ if and only if $g = g^\prime$ (since they are intersecting and $g^{-1}g^\prime \in Z$). The latter implies that $|\overline{\mathcal{F}}| = |\mathcal{F}| = \frac{|\gammal{2}{q}|}{q^2-1} = \frac{|\pgammal{2}{q}|}{q+1}$. Therefore, $\overline{\mathcal{F}} = \mathfrak{F}$, which is defined in \eqref{eq:stab-pgammal}. Now, the preimage of $\mathfrak{F}$ under the canonical surjective homomorphism is the subgroup of $\gammal{2}{q}$ that leaves $\langle e_1\rangle$ invariant. Consequently, the maximum intersecting set $\mathcal{F} \subset \gammal{2}{q}$ is contained in the subgroup
	\begin{align}
		\mathcal{H}_q = \left\{ \begin{bmatrix}
			x & z\\
			0 & y
		\end{bmatrix}\varphi^j:\ z\in \mathbb{F}_q,x,y\in \mathbb{F}_q^*, 0\leq j\leq k-1 \right\} .\label{eq:H}
	\end{align}
	Define
	\begin{align*}
		\mathcal{K}_q = \left\{ \begin{bmatrix}
			1 & z\\
			0 & y
		\end{bmatrix}\varphi^j:\ z\in \mathbb{F}_q,y\in \mathbb{F}_q^*, 0\leq j\leq k-1 \right\} \leq \mathcal{H}_{q}
	\end{align*}
	and 
	\begin{align*}
		\mathcal{M}_q = \left\{ \begin{bmatrix}
			x & z\\
			0 & 1
		\end{bmatrix}\varphi^j:\ z\in \mathbb{F}_q,x\in \mathbb{F}_q^*, 0\leq j\leq k-1 \right\} \leq \mathcal{H}_q.
	\end{align*}
	Note that $\mathcal{K}_q$ is exactly the stabilizer of $e_1$ in $\gammal{2}{q}$ and $\mathcal{M}_q$ is the subgroup leaving the hyperplane $e_2+\langle e_1\rangle$ invariant. 
	\section{Lexicographic product and the derangement graph of $\gammal{2}{q}$}\label{sect:nice-graphs}
	
	Let $\Gamma_q$ be the subgraph of the derangement graph of $\gammal{2}{q}$ induced by $\mathcal{H}_q$. Let $\omega$ be a primitive element of $\mathbb{F}_q$. Define
	\begin{align*}
		\mathcal{N}_q = \left\{ \begin{bmatrix}
			1 & z\\
			0 & y
		\end{bmatrix}:\ z\in \mathbb{F}_q,y\in \mathbb{F}_q^* \right\}\leq \gl{2}{q}.
	\end{align*}
	It is worth mentioning that $\mathcal{H}_q = \left(\mathcal{N}_q\rtimes \langle Z, \varphi \rangle\right) = \mathcal{N}_q \rtimes \left(Z\rtimes \langle \varphi\rangle\right)$ and $\mathcal{K}_q = \mathcal{N}_q \rtimes \langle \varphi \rangle$.
	
	We also define $V_{i,j} =  \omega^i \varphi^j\mathcal{N}_q = \left\{ \omega^i \varphi^jB :\ B\in \mathcal{N}_q\right\}$, for any $0\leq i\leq q-2$ and $0\leq j\leq k-1$. Since $\mathcal{N}_q \leq \mathcal{H}_q$, we know that $\mathcal{H}_q$ is partitioned by $\left(V_{i,j}\right)_{0\leq i\leq q-2,0\leq j\leq k-1}$.
	
	Before proceeding to the main results, we need one last notation. For any $y \in \mathbb{F}_q^*$, we define
	\begin{align}
		\mathcal{E}(y) := \left\{ \begin{bmatrix}
			1 & z\\
			0 & y
		\end{bmatrix}:\ z\in \mathbb{F}_q \right\}\leq \gl{2}{q}.\label{eq:epsilon}
	\end{align}
	Note that $\varepsilon(1)$ is a Sylow $p$-subgroup of $\gl{2}{q}$. It is clear that $\mathcal{N}_q = \bigcup_{q\in \mathbb{F}_q^*} \mathcal{E}(y)$, and so $V_{i,j} = \omega^i \varphi^j \bigcup_{y\in \mathbb{F}_q^*}\mathcal{E}(y)$, for any $0\leq i\leq q-2$ and $0\leq j\leq k-1$. 
	
	For any $n\geq 1$, let $K_n$ be the complete graph on $n$ vertices, and $\overline{K_n}$ its complement. For any integer $n,t\geq 2$, let $K_{n}^t$ be the complete multipartite graph $K_{n,\ldots,n}$ on $nt$ vertices. Consider a subgraph $P$ of $K_{n}^t$ consisting of $n$ vertex-disjoint cliques isomorphic to $K_t$. We define $\widetilde{K}_{n}^t$ to be the graph obtained by removing the edges of $P$ from $K_n^t$. Note that the resulting graphs are all isomorphic, independently on $P$, so we do not specify which unions of cliques to use.
	
	Given two graphs $X$ and $Y$, recall that the \itbf{lexicographic product} of $X$ and $Y$ is the graph $X[Y]$ whose vertex set is $V(X) \times V(Y)$, and two vertices $(x,y)$ and $(x^\prime,y^\prime)$ are adjacent if and only if $x\sim_X x^\prime$ or $x = x^\prime$ and $y\sim_Y y^\prime$.
	\begin{lem}
		For any distinct $0\leq i,i^\prime\leq q-2$ and $0\leq j\leq k-1$, the subgraph of $\Gamma_q$ induced by $\omega^i\varphi^j\mathcal{E}(y) \cup \omega^{i^\prime}\varphi^j\mathcal{E}(y^\prime)$ is a coclique if and only if $y^\prime =y\omega^{(i - i^\prime)p^{k-j}}$. If $y^\prime \neq y\omega^{(i - i^\prime)p^{k-j}}$, then the subgraph is the complete bipartite graph $K_{q,q}$. In particular, the subgraph of $\Gamma_q$ induced by $V_{i,j} \cup V_{i^\prime,j}$ is $\widetilde{K}_{q-1}^2\left[\overline{K_{q}}\right]$.\label{lem:same-layer}
	\end{lem}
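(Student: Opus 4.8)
\emph{Proof proposal.} The plan is to turn the adjacency relation in $\Gamma_q$ into the singularity of an explicit upper‑triangular $2\times 2$ matrix, to notice that the resulting condition is insensitive to the off‑diagonal entries, and then to read off the block structure of $V_{i,j}\cup V_{i',j}$.

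First I would parametrise: write a typical element of $\omega^i\varphi^j\mathcal{E}(y)$ as $g=\omega^i\varphi^j A$ with $A=\begin{bmatrix}1&z\\0&y\end{bmatrix}$, and one of $\omega^{i'}\varphi^j\mathcal{E}(y')$ as $h=\omega^{i'}\varphi^j B$ with $B=\begin{bmatrix}1&w\\0&y'\end{bmatrix}$. Since $\varphi^j$ acts entrywise and is a field homomorphism, $g(v)=\omega^i(Av)^{\varphi^j}=\omega^iA^{\varphi^j}v^{\varphi^j}$ for every non‑zero $v\in\mathbb F_q^2$, and likewise $h(v)=\omega^{i'}B^{\varphi^j}v^{\varphi^j}$. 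By definition $g$ and $h$ are adjacent in $\Gamma_q$ iff $h^{-1}g$ has no fixed point, i.e.\ iff there is no non‑zero $v$ with $g(v)=h(v)$; as $v\mapsto v^{\varphi^j}$ is a bijection of $\mathbb F_q^2\setminus\{0\}$, this is equivalent to the invertibility of $\omega^iA^{\varphi^j}-\omega^{i'}B^{\varphi^j}$, which is upper triangular with diagonal entries $\omega^i-\omega^{i'}$ and $\omega^iy^{p^j}-\omega^{i'}(y')^{p^j}$ (the off‑diagonal entry $\omega^iz^{p^j}-\omega^{i'}w^{p^j}$ does not enter the determinant). Hence $g$ and $h$ are non‑adjacent iff $(\omega^i-\omega^{i'})\big(\omega^iy^{p^j}-\omega^{i'}(y')^{p^j}\big)=0$. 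Since $i\neq i'$, the first factor is non‑zero, so this holds iff $(y')^{p^j}=\omega^{i-i'}y^{p^j}$; applying to both sides the inverse of the automorphism $x\mapsto x^{p^j}$ of $\mathbb F_q$, namely $x\mapsto x^{p^{k-j}}$ (inverse because $x^{p^k}=x$ on $\mathbb F_q$), this becomes $y'=y\,\omega^{(i-i')p^{k-j}}$. Crucially this condition does not involve $z$ or $w$.

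This already yields the dichotomy: the bipartite subgraph induced between $\omega^i\varphi^j\mathcal{E}(y)$ and $\omega^{i'}\varphi^j\mathcal{E}(y')$ is edgeless if $y'=y\,\omega^{(i-i')p^{k-j}}$ and is the complete bipartite graph $K_{q,q}$ otherwise (each side has $q$ vertices). Since moreover every element of $\omega^i\varphi^j\mathcal{E}(y)$ sends $e_1$ to $\omega^i e_1$ (as $Ae_1=e_1$ and $e_1^{\varphi^j}=e_1$), and similarly on the other side, each of these two sets is itself a coclique; so in the first case the union induces a coclique, which gives the ``if and only if''. For the last assertion, $V_{i,j}=\bigsqcup_{y\in\mathbb F_q^*}\omega^i\varphi^j\mathcal{E}(y)$ and $V_{i',j}=\bigsqcup_{y'\in\mathbb F_q^*}\omega^{i'}\varphi^j\mathcal{E}(y')$, so $V_{i,j}\cup V_{i',j}$ is partitioned into $2(q-1)$ blocks, each an independent set of size $q$; all of $V_{i,j}$ (resp.\ $V_{i',j}$) is a coclique because its elements agree on $e_1$, so distinct blocks on the same side have no edges between them; and since $y\mapsto y\,\omega^{(i-i')p^{k-j}}$ is a bijection of $\mathbb F_q^*$, the non‑edge relation between the $q-1$ blocks of $V_{i,j}$ and the $q-1$ blocks of $V_{i',j}$ is a perfect matching, every other cross‑pair of blocks being completely joined. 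Contracting each block therefore produces $K_{q-1,q-1}$ with a perfect matching removed — i.e.\ $\widetilde K_{q-1}^2$ — and blowing each vertex back up to an independent set of size $q$ shows that $V_{i,j}\cup V_{i',j}$ induces $\widetilde K_{q-1}^2\big[\overline{K_q}\big]$.

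The main obstacle, modest as it is, is the exponent bookkeeping: the Frobenius twist deposits $p^j$‑th powers in the matrix entries, yet solving for $y'$ reintroduces the inverse power, which must be identified with the $p^{k-j}$‑th power (using $p^k\equiv 1 \pmod{q-1}$); beyond that the argument is a single $2\times 2$ determinant together with the standard independent‑set arithmetic of lexicographic products.
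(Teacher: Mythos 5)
Your proof is correct and follows essentially the same route as the paper's: both reduce adjacency to the (non-)vanishing of the determinant of an upper-triangular $2\times 2$ matrix whose decisive entry does not involve $z$ or $w$, yielding the condition $y'=y\,\omega^{(i-i')p^{k-j}}$, and then assemble the block structure of $V_{i,j}\cup V_{i',j}$ from the resulting all-or-nothing adjacency between cosets of $\mathcal{E}(1)$. The only cosmetic difference is that the paper computes the quotient $g^{-1}h$ and asks whether $1$ is an eigenvalue, whereas you test singularity of the difference $\omega^iA^{\varphi^j}-\omega^{i'}B^{\varphi^j}$; your explicit check that each coset (and each $V_{i,j}$) is itself a coclique, via agreement on $e_1$, is a detail the paper leaves implicit.
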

	\begin{proof}
		Consider the elements		
		$\omega^i \varphi^j
		\begin{bmatrix}
			1 & z\\
			0 & y
		\end{bmatrix} \in V_{i,j}$ and $\omega^{i^\prime}\varphi^j\begin{bmatrix}
		1 & z^\prime\\
		0 & y^\prime 
		\end{bmatrix} \in V_{i^\prime,j}$. These elements are adjacent if and only if 
		\begin{align*}
			\omega^{(i^\prime -i)p^{k-j}} 
			\begin{bmatrix}
				1 & -zy^{-1}\\
				0 & y^{-1}
			\end{bmatrix}
			\begin{bmatrix}
				1 & z^\prime\\
				0 & y^\prime 
			\end{bmatrix} = \omega^{(i^\prime -i)p^{k-j}} \begin{bmatrix}
			1 & z^\prime -zy^{-1}y^\prime\\
			0 & y^{-1}y^\prime  
			\end{bmatrix}
		\end{align*}
		is a derangement. This happens if and only if $1$ is an eigenvalue of the matrix, that is, if $y^\prime \neq y\omega^{(i-i^\prime)p^{k-j}}$. In addition, the latter is independent of the choice of $z,z^\prime \in \mathbb{F}_q$. If we partition $\mathcal{N}_q$ as cosets of its unique Sylow $p$-subgroup $\mathcal{E}(1)$, then it follows immediately that given one of such cosets in $V_{i,j}$ and one in $V_{i^\prime,j}$, their induced subgraph in $\Gamma_q$ is either a complete bipartite graph or a coclique. This completes the proof.
	\end{proof}
	\begin{cor}
		The subgraph of $\Gamma_q$ induced by $V_{0,j}\cup V_{1,j}\cup \ldots\cup V_{q-2,j}$ is isomorphic to $\widetilde{K}_{q-1}^{q-1}\left[\overline{K_{q}}\right]$, for any $0\leq j\leq k-1$.\label{cor:one-layer}
	\end{cor}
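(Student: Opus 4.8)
The plan is to pass to the quotient graph whose vertices are the cosets $\omega^i\varphi^j\mathcal{E}(y)$ and to identify it explicitly. Fix $j$ and write $L := V_{0,j}\cup V_{1,j}\cup\cdots\cup V_{q-2,j}$ for the vertex set in question. Since $\mathcal{N}_q=\bigcup_{y\in\mathbb{F}_q^*}\mathcal{E}(y)$, the set $L$ is partitioned into the $(q-1)^2$ blocks $B_{i,y}:=\omega^i\varphi^j\mathcal{E}(y)$ indexed by $(i,y)\in\{0,\dots,q-2\}\times\mathbb{F}_q^*$, each of size $q$. First I would observe that every $B_{i,y}$ is a coclique of $\Gamma_q$: the group $\mathcal{N}_q$ lies in the stabilizer $\mathcal{K}_q$ of $e_1$, hence is an intersecting set and so a coclique, and left translation by $\omega^i\varphi^j$ is an automorphism of $\Gamma_q$, so $V_{i,j}=\omega^i\varphi^j\mathcal{N}_q$, and each $B_{i,y}\subseteq V_{i,j}$, is a coclique; in particular $B_{i,y}$ induces $\overline{K_q}$, and two blocks inside the same layer $V_{i,j}$ are joined by no edges. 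For $i\neq i'$, Lemma~\ref{lem:same-layer} says the bipartite graph between $B_{i,y}$ and $B_{i',y'}$ is empty when $y'=y\omega^{(i-i')p^{k-j}}$ and complete otherwise. Thus the adjacency between any two blocks is ``all edges'' or ``no edges'' and depends only on the pair of blocks, so the subgraph of $\Gamma_q$ induced on $L$ equals the lexicographic product $Q[\overline{K_q}]$, where $Q$ is the graph on the $(q-1)^2$ blocks with $B_{i,y}\sim B_{i',y'}$ iff their bipartite graph is complete.

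It remains to show $Q\cong\widetilde{K}_{q-1}^{q-1}$. Writing $y=\omega^s$, so that the block $B_{i,y}$ is indexed by $(i,s)\in(\mathbb{Z}/(q-1)\mathbb{Z})^2$, and setting $c:=p^{k-j}\bmod(q-1)$, the description above says that two distinct blocks $(i,s),(i',s')$ are non-adjacent in $Q$ precisely when $i=i'$, or $i\neq i'$ and $s'\equiv s+(i-i')c\pmod{q-1}$. Now apply the shear bijection $\Psi\colon(i,s)\mapsto(i,\,s+ic)=:(a,b)$ of $(\mathbb{Z}/(q-1)\mathbb{Z})^2$; it is a bijection because for each fixed $i$ translation by $ic$ is a bijection of $\mathbb{Z}/(q-1)\mathbb{Z}$. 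A direct check shows that under $\Psi$ both non-adjacency clauses become ``$a=a'$ or $b=b'$''; hence $\Psi$ carries $Q$ isomorphically onto the graph $G$ on $(\mathbb{Z}/(q-1)\mathbb{Z})^2$ in which $(a,b)\sim(a',b')$ iff $a\neq a'$ and $b\neq b'$.

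Finally I would identify $G$ with $\widetilde{K}_{q-1}^{q-1}$. Realize $K_{q-1}^{q-1}=K_{q-1,\dots,q-1}$ on the vertex set $(\mathbb{Z}/(q-1)\mathbb{Z})^2$ with the $q-1$ parts indexed by the first coordinate, and let $P$ be the union of the $q-1$ vertex-disjoint transversal cliques $\{(a,b):a\in\mathbb{Z}/(q-1)\mathbb{Z}\}$, one for each fixed $b$. Deleting the edges of $P$ from $K_{q-1}^{q-1}$ leaves exactly $G$, and since $\widetilde{K}_{q-1}^{q-1}$ is independent of the choice of $P$, we get $Q\cong G\cong\widetilde{K}_{q-1}^{q-1}$, so the subgraph induced on $L$ is $\widetilde{K}_{q-1}^{q-1}[\overline{K_q}]$, as claimed. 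The only place demanding care is the passage $Q\to\widetilde{K}_{q-1}^{q-1}$, where one must keep the modular exponent $p^{k-j}$ and the signs consistent; the rest is bookkeeping on top of Lemma~\ref{lem:same-layer}.
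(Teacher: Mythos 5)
Your proof is correct and follows the same route the paper intends: the corollary is stated as an immediate consequence of Lemma~\ref{lem:same-layer}, and your argument is precisely the careful expansion of that deduction --- partitioning the layer into the $(q-1)^2$ cosets of $\mathcal{E}(1)$, noting the all-or-nothing adjacency between blocks to obtain a lexicographic product over the block graph $Q$, and using the shear $(i,s)\mapsto(i,s+ip^{k-j})$ to identify $Q$ with $\widetilde{K}_{q-1}^{q-1}$. The only detail worth keeping explicit (which you do handle) is that blocks within a single $V_{i,j}$ must be treated separately, since the coclique criterion of Lemma~\ref{lem:same-layer} is stated only for distinct $i,i'$.
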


	Now, we determine the adjacency between $V_{i,j}$ and $V_{i^\prime,j^\prime}$, for $0\leq i,i^\prime\leq q-2$ and $0\leq j\leq j^\prime\leq k-1$. Before proceeding to the statements, let us determine when a vertex from $V_{i,j}$ is adjacent to a vertex of $V_{i^\prime,j^\prime}$.
	Consider the arbitrary elements		
	\begin{align}
		\mbox{$\omega^i \varphi^j
			\begin{bmatrix}
				1 & z\\
				0 & y
			\end{bmatrix} \in V_{i,j}$ and $\omega^{i^\prime}\varphi^{j^\prime}\begin{bmatrix}
				1 & z^\prime\\
				0 & y^\prime 
			\end{bmatrix} \in V_{i^\prime,j^\prime}$.}\label{eq:elements-to-check}
	\end{align}
	These elements are adjacent if and only if 
	\begin{align*}
		\left(\omega^i \varphi^j
		\begin{bmatrix}
			1 & z\\
			0 & y
		\end{bmatrix}\right)^{-1}
		\left(\omega^{i^\prime}\varphi^{j^\prime}\begin{bmatrix}
			1 & z^\prime\\
			0 & y^\prime 
		\end{bmatrix}\right)
	\end{align*}
	is a derangement. We have
	\begin{align*}
		\left(\omega^i \varphi^{j}
		\begin{bmatrix}
			1 & z\\
			0 & y
		\end{bmatrix}\right)^{-1}
		\left(\omega^{i^\prime}\varphi^{j^\prime}\begin{bmatrix}
			1 & z^\prime\\
			0 & y^\prime 
		\end{bmatrix}\right)
		&=
		\begin{bmatrix}
			1 & -zy^{-1}\\
			0 & y^{-1}
		\end{bmatrix} 
		\varphi^{-j}\omega^{i^\prime-i}\varphi^{j^\prime}\begin{bmatrix}
			1 & z^\prime\\
			0 & y^\prime 
		\end{bmatrix}\\
		&=
		\begin{bmatrix}
			1 & -zy^{-1}\\
			0 & y^{-1}
		\end{bmatrix} 
		\left(\omega^{i^\prime-i}\right)^{p^{k-j}}\varphi^{j^\prime-j}\begin{bmatrix}
			1 & z^\prime\\
			0 & y^\prime 
		\end{bmatrix}\\
		&=
		\omega^{\left(i^\prime-i\right)p^{k-j}}
		\begin{bmatrix}
			1 & -zy^{-1}\\
			0 & y^{-1}
		\end{bmatrix} 
		\varphi^{j^\prime-j}
		\begin{bmatrix}
			1 & z^\prime\\
			0 & y^\prime 
		\end{bmatrix}\\
		&=
		\omega^{(i^\prime-i)p^{k-j}}
		\varphi^{j^\prime-j}
		\begin{bmatrix}
			1 & \left(-zy^{-1}\right)^{p^{j^\prime-j}}\\
			0 & \left(y^{-1}\right)^{p^{j^\prime -j}}
		\end{bmatrix} 	
		\begin{bmatrix}
			1 & z^\prime\\
			0 & y^\prime 
		\end{bmatrix}\\
		&=
		\omega^{(i^\prime-i)p^{k-j}}
		\varphi^{j^\prime-j}
		\begin{bmatrix}
			1 & u\\
			0 & v
		\end{bmatrix} \in V_{(i^\prime-i)p^{k-j},j^\prime-j},
	\end{align*}
	for some $u\in \mathbb{F}_q$ and $v \in \mathbb{F}_q^*$. Therefore, the adjacencies between $V_{i,j}$ and $V_{i^\prime,j^\prime}$ are equivalent to that of the identity element and the elements of $V_{(i^\prime-i)p^{k-j},j^\prime-j}$. In particular, the vertices in \eqref{eq:elements-to-check} are adjacent if and only if 
	\begin{align}
		\omega^{(i^\prime-i)p^{k-j}}
		\varphi^{j^\prime-j}
		\begin{bmatrix}
			1 & u\\
			0 & v
		\end{bmatrix}\label{eq:element-shifted}
	\end{align}
	is a derangement. If $[a\ b]^t \in \mathbb{F}_q^2$ is fixed by the above element of $\gammal{2}{q}$, then we must have
	\begin{align*}
		\begin{bmatrix}
			a\\
			b
		\end{bmatrix}
		=
		\omega^{(i^\prime-i)p^{k-j}} \varphi^{j^\prime -j}
		\begin{bmatrix}
			a+ub\\
			vb
		\end{bmatrix}
		\Leftrightarrow 
		\varphi^{j-j^\prime}
		\begin{bmatrix}
			a\omega^{(i-i^\prime)p^{k-j}}\\
			b\omega^{(i-i^\prime)p^{k-j}}
		\end{bmatrix}
		=
		\begin{bmatrix}
			a+ub\\
			vb
		\end{bmatrix},
	\end{align*}
	which can be reformulated as
	\begin{align}
		\begin{cases}
			a^{p^{j-j^\prime}} \omega^{(i-i^\prime)p^{k-j^\prime}} &= a+ub\\
			b^{p^{j-j^\prime}} \omega^{(i-i^\prime)p^{k-j^\prime}} &= vb.
		\end{cases}\label{eq:matrix}
	\end{align}
	
	Consequently, the two vertices in \eqref{eq:elements-to-check} are adjacent in $\Gamma_q$ if and only if \eqref{eq:matrix} admits no solutions in $a,b\in \mathbb{F}_q$.
	\begin{lem}
		Assume that $p$ is an odd prime. 
		\begin{enumerate}[(I)]
			\item For any $0\leq i,i^\prime\leq q-2$ and for any $0\leq j< j^\prime \leq k-1$, the subgraph of $\Gamma_q$ induced by $V_{i,j} \cup V_{i^\prime,j^\prime}$ is a coclique if and only if $i^{\prime}\equiv i \pmod {p^{j^\prime-j}-1}$.\label{i}
			\item  If $V_{i,j} \cup V_{i^\prime,j^\prime}$ is not a coclique of $\Gamma_q$, then a vertex of $V_{i,j}$ is adjacent to exactly 
			\begin{align*}
				q^2-q-\left(\frac{q-1}{p^{\gcd\left(k,j^\prime-j\right)}-1}\right)q
			\end{align*}
			vertices in $V_{i^\prime,j^\prime}$.\label{ii}
		\end{enumerate}\label{lem:across}
	\end{lem}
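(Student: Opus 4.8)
The plan is to extract both statements from the fixed-point system \eqref{eq:matrix}, using the reduction carried out just above the lemma. For a fixed $x\in V_{i,j}$, as $x'$ runs over $V_{i',j'}$ the element $x^{-1}x'$ runs over \emph{all} of $V_{(i'-i)p^{k-j},\,j'-j}$, so the number of neighbours of $x$ in $V_{i',j'}$ equals the number of derangements contained in $V_{(i'-i)p^{k-j},\,j'-j}$, and in particular does not depend on $x$. Writing $m:=j'-j$ and $c\equiv(i'-i)p^{k-j}\pmod{q-1}$, the first claim becomes: $V_{c,m}$ contains no derangement if and only if $\gcd(p^{j'-j}-1,q-1)=p^{\gcd(j'-j,k)}-1$ divides $i'-i$ (equivalently, divides $c$, since $p^{\gcd(j'-j,k)}-1$ is coprime to $p$); the second claim becomes a count of the derangements in $V_{c,m}$ in the remaining case. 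By \eqref{eq:elements-to-check}--\eqref{eq:matrix}, the element of $V_{c,m}$ with parameters $(u,v)\in\mathbb{F}_q\times\mathbb{F}_q^{*}$ is a derangement exactly when \eqref{eq:matrix} has no solution $(a,b)\neq(0,0)$, and I would analyse this by the dichotomy $b=0$ versus $b\neq0$.

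If $b=0$, the second equation of \eqref{eq:matrix} is automatic and the first reduces to $a^{p^{j-j'}}\,\omega^{(i-i')p^{k-j'}}=a$; substituting $a=\omega^{s}$ turns this into a linear congruence for $s$ modulo $q-1$, and, via the classical identity $\gcd(p^{j'-j}-1,p^{k}-1)=p^{\gcd(j'-j,k)}-1$, it has a solution with $a\neq0$ if and only if $p^{\gcd(j'-j,k)}-1\mid i'-i$; importantly this condition is independent of $u$ and $v$. Hence if $p^{\gcd(j'-j,k)}-1\mid i'-i$, every element of $V_{c,m}$ fixes some non-zero $(a,0)$, so $V_{i,j}\cup V_{i',j'}$ is a coclique; conversely, specialising \eqref{eq:matrix} to the element $\omega^{c}\varphi^{m}$ (the case $u=0$, $v=1$) shows that a coclique forces the same divisibility. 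If $b\neq0$, the second equation of \eqref{eq:matrix} forces $b^{\,p^{j-j'}-1}=v\,\omega^{(i'-i)p^{k-j'}}$, which is solvable in $b\in\mathbb{F}_q^{*}$ if and only if $v\,\omega^{(i'-i)p^{k-j'}}$ is a $(p^{j'-j}-1)$-st power in $\mathbb{F}_q^{*}$; and for such a $b$ the first equation determines $a$, because the $\mathbb{F}_p$-linear map $a\mapsto a-\omega^{c}a^{p^{m}}$ has kernel $\{0\}\cup\{a\neq0:a^{p^{m}-1}=\omega^{-c}\}$, which collapses to $\{0\}$ precisely when $p^{\gcd(j'-j,k)}-1\nmid c$, i.e. in the non-coclique case, so that the map is then bijective and $a$ exists for every $u$.

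Combining the two cases proves both statements at once. Assume $p^{\gcd(j'-j,k)}-1\nmid i'-i$: then no vector $(a,0)$ is fixed, and since $v\mapsto v\,\omega^{(i'-i)p^{k-j'}}$ permutes $\mathbb{F}_q^{*}$ and the $(p^{j'-j}-1)$-st powers form a subgroup of index $p^{\gcd(j'-j,k)}-1$, there are exactly $\tfrac{q-1}{p^{\gcd(j'-j,k)}-1}$ values of $v$ for which \eqref{eq:matrix} is solvable (and then for all $q$ values of $u$), while for each of the remaining $(q-1)-\tfrac{q-1}{p^{\gcd(j'-j,k)}-1}$ values of $v$ the system is unsolvable for every $u$, i.e. the corresponding $q$ elements of $V_{c,m}$ are derangements. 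Since $p$ is odd we have $p^{\gcd(j'-j,k)}-1\geq2$, so such a $v$ exists and $V_{c,m}$ really contains derangements --- this is exactly where the odd-characteristic hypothesis is used --- completing the first claim; and counting, $V_{c,m}$ contains $q\bigl((q-1)-\tfrac{q-1}{p^{\gcd(j'-j,k)}-1}\bigr)=q^{2}-q-\bigl(\tfrac{q-1}{p^{\gcd(k,j'-j)}-1}\bigr)q$ derangements, which is the second claim. The point I expect to be most delicate is the $b\neq0$ branch, where one must check that the dependence of \eqref{eq:matrix} on $u$ genuinely disappears: this hinges on the invertibility of $a\mapsto a-\omega^{c}a^{p^{m}}$, which is itself equivalent to the non-existence of a fixed $(a,0)$, so the two cases interlock and must be treated together rather than in isolation.
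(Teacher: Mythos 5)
Your proposal is correct and follows essentially the same route as the paper: reduce adjacency to the question of which elements of $V_{(i'-i)p^{k-j},\,j'-j}$ are derangements, then analyse the fixed-point system \eqref{eq:matrix} through the dichotomy $b=0$ versus $b\neq 0$, and count the admissible cosets of $v$. Two local points where you diverge are worth noting, both to your advantage. First, in the $b\neq0$ branch you establish the existence of $a$ by observing that $a\mapsto \lambda a^{p^{m}}-a$ is an $\mathbb{F}_p$-linear map whose kernel is trivial exactly in the non-coclique case, hence bijective; the paper instead rewrites the equation as $a^{p^{j-j'}-1}=\omega^{(i'-i)p^{k-j'}}(1+a^{-1}ub)$ and asserts solvability, which is circular since $a$ appears on both sides --- your linearization argument is the rigorous way to close that step. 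Second, you consistently phrase the divisibility condition with the modulus $\gcd(p^{j'-j}-1,q-1)=p^{\gcd(j'-j,k)}-1$ rather than $p^{j'-j}-1$; since the indices $i,i'$ are only defined modulo $q-1$, this is the correct reading of the lemma's congruence (the two moduli agree only when $j'-j$ divides $k$), and it is the one actually used in the count in part (II).
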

	\begin{proof}		
		\eqref{i} Assume that $i^\prime \equiv i \pmod{p^{j^\prime - j}-1}$ and let us prove that $V_{i,j} \cup V_{i^\prime,j^\prime}$ is a coclique. By the explanation before the statement of the lemma, we only need to show that every element of $V_{(i^\prime-i)p^{k-j},j^\prime-j}$ has a fixed point. In order to do this, we need to show that $\eqref{eq:matrix}$ has a solution in $a$ and $b$, where $(a,b)\neq (0,0)$, independently of $u\in \mathbb{F}_q$ and $v\in \mathbb{F}_q^*$. Let us choose $b = 0$. Then, the second equation of \eqref{eq:matrix} is trivially satisfied.
		The first equation of \eqref{eq:matrix} becomes
		\begin{align*}
			a^{p^{j^\prime - j}-1} = \omega^{(i^\prime-i)p^{k-j}} .
		\end{align*}
		We note that the above equation has a solution in $a\in \mathbb{F}_q$ if and only if $\omega^{-(i^\prime - i)p^{k-j}} \in \langle \omega^{p^{j^\prime-j}-1} \rangle$. Therefore, under the assumption that $b=0$, a solution in $a\in \mathbb{F}_q$ exists if and only if 
		\begin{align}
			\left(p^{j^\prime -j}-1\right) \mid (i^\prime -i) \Leftrightarrow i^\prime \equiv i \pmod {p^{j^\prime -j}-1}.\label{eq:equivalent}
		\end{align}
		 Since $i^\prime \equiv i \pmod{p^{j^\prime - j}-1}$, we conclude that $\omega^{(i^\prime-i)p^{k-j}}
		 \varphi^{j^\prime-j}
		 \begin{bmatrix}
		 	1 & u\\
		 	0 & v
		 \end{bmatrix}$ always fixes a point, and thus not a derangement. Since \eqref{eq:equivalent} is independent of the choice of $u\in \mathbb{F}_q$ and $v\in \mathbb{F}_q^*$, we conclude that $V_{i,j} \cup V_{i^\prime,j^\prime}$ is a coclique in $\Gamma_q.$ 
		 
		Conversely, assume that $V_{i,j}\cup V_{i^\prime,j^\prime}$ is a coclique. Then, for any $u\in \mathbb{F}_q$ and $v\in \mathbb{F}_q^*$,  \eqref{eq:matrix} always has a solution in $a$ and $b$, where $(a,b)\neq (0,0)$. If $b=0$, then a solution in $a$ cannot exist unless $i^\prime\equiv i \pmod{p^{j^\prime-j}-1}$. So we may assume that $b \neq 0$. Similar to what we saw previously, the equation $b^{p^{j^\prime-j}-1} = v^{-p^{j^\prime -j}}\omega^{-(i^\prime-i)p^{k-j}} $ has a solution in $b\in \mathbb{F}_q$ if and only if $v^{-p^{j^\prime -j}}\omega^{-(i^\prime-i)p^{k-j}} \in \langle \omega^{p^{j^\prime -j}-1}\rangle$. Hence, a solution in $b\in \mathbb{F}_q$ exists if and only if 
		\begin{align}
			\mbox{}	v^{-p^{j^\prime -j}} \in \langle \omega^{p^{j^\prime -j}-1}\rangle\omega^{(i^\prime-i)p^{k-j}} \Leftrightarrow v \in \langle \omega^{p^{j^\prime -j}-1}\rangle\omega^{(i^\prime-i)p^{k-j^\prime}}.\label{eq:equivalence}
		\end{align}
		Equivalently, since $v = \omega^t$ for some $0\leq t\leq q-2$, a solution in $b$ exists if and only if 
		\begin{align}
			t\equiv (i^\prime-i)p^{k-j^\prime} \pmod {p^{j^\prime-j}-1}.\label{eq:equivalent-assumption}
		\end{align}
		If $t \equiv 0 \pmod{p^{j^\prime-j}-1}$, then we again have $i^\prime \equiv i \pmod {p^{j^\prime-j}-1}$. Hence, for any $v=\omega^{t_0}$ such that $t_0 \not\equiv (i^\prime-i)p^{k-j^\prime} \pmod {p^{j^\prime-j}-1}$, there is no solution in $b\in \mathbb{F}_q^*$ and the element $\omega^{(i^\prime-i)p^{k-j}}
		\varphi^{j^\prime-j}
		\begin{bmatrix}
			1 & u\\
			0 & \omega^{t_0}
		\end{bmatrix}$ is a derangement \footnote{there always exists such a $t_0$ as long as the characteristic is odd since $\frac{q-1}{p^{\gcd(j^\prime - j,k)}-1}>1$.}, for any $u \in \mathbb{F}_q$. Consequently, $V_{i,j} \cup V_{i^\prime,j^\prime}$ cannot be a coclique, which is a contradiction. The assumption that led to this contradiction is $b\neq 0$, so we must have $b=0$ and $i^\prime \equiv i\pmod{p^{j^\prime-j}-1}$. This completes the proof of \eqref{i}.
		
		\vspace*{0.5cm}
		\eqref{ii} Now assume that $V_{i,j}\cup V_{i^\prime,j^\prime}$ is not a coclique. By \eqref{i}, we know that the latter is equivalent to $i^\prime - i \not\equiv 0 \pmod {p^{j^\prime - j}-1}$.
		In order to prove \eqref{ii}, we need to compute the number of elements in $V_{i^\prime,j^\prime}$ that are non-adjacent to a given vertex of $V_{i,j}$. Again, by the explanation before the statement of the lemma this is the same as the number of non-derangements in $V_{(i^\prime-i)p^{k-j},j^\prime-j}$. The number of adjacent vertices is then obtained by subtracting the number non-derangements from $q^2-q$. 
		
		The element of $V_{(i^\prime-i)p^{k-j},j^\prime-j}$ given in \eqref{eq:element-shifted} is a non-derangement if and only if $\eqref{eq:matrix}$ has a solution in $a$ and $b$. We claim that given $v\in \mathbb{F}^*_q$, if a solution $b\in \mathbb{F}_q$ exists in \eqref{eq:matrix}, then regardless of the choice of $u\in \mathbb{F}_q$, a solution in $a\in \mathbb{F}_q$ from \eqref{eq:matrix} must exist. Assume that a solution in $b\in \mathbb{F}_q$ exists. Then, from the proof of \eqref{i} we must have $t\equiv (i^\prime-i)p^{k-j^\prime} \pmod {p^{j^\prime-j}-1}$, where $0\leq t\leq q-2$ such that $v = \omega^t$. Now consider the second equation of \eqref{eq:matrix}, which is
		\begin{align}
			a^{p^{j-j^\prime}}\omega^{(i-i^\prime)p^{k-j^\prime}}  &= a+ub.\label{eq:main-a} 
		\end{align}
		
		Note that if $b = 0$, then the only way that a solution in $a\in \mathbb{F}_q$ exists is if $i^\prime \equiv i \pmod {p^{j^\prime -j}-1}$, which is not the case. Hence, $b\neq 0$, and we conclude that  whenever $u = 0$, we can choose $a = 0$.
		
		Suppose that $u\neq 0$. If $a$ exists, then $a\neq 0$ otherwise $ub = 0$ but $u\neq 0$ and $b\neq 0$. Therefore, we may reformulate \eqref{eq:main-a} as
		\begin{align*}
			a^{p^{j-j^\prime}-1}  &= \omega^{(i^\prime-i)p^{k-j^\prime}}\left(1+a^{-1}ub\right). 
		\end{align*}
		The above equation has a solution if and only if  $\omega^{(i^\prime-i)p^{k-j^\prime}}\left(1+a^{-1}ub\right)\in \langle \omega^{p^{j-j^\prime}-1}\rangle$. 
		Consequently, for any $u\in \mathbb{F}_q$, a solution in $a\in \mathbb{F}_q$ exists. In other words, if \eqref{eq:matrix} has a solution in $b\in \mathbb{F}_q$, then $a\in \mathbb{F}_q$ always exists.
		
		Therefore, for every $v=\omega^t$ such that $t\equiv (i^\prime-i)p^{k-j^\prime} \pmod{p^{j^\prime-j}-1}$ and $t\not\equiv 0 \pmod{p^{j^\prime- j}-1}$, the element $\omega^{(i^\prime-i)p^{k-j}}
		\varphi^{j^\prime-j}
		\begin{bmatrix}
			1 & u\\
			0 & v
		\end{bmatrix}$ is not a derangement, for any $u\in \mathbb{F}_q$. 
		In particular, there are exactly 
		\begin{align*}
			\left|\langle \omega^{p^{j^\prime -j}-1}\rangle\right| = \frac{q-1}{\gcd\left(q-1,p^{j^\prime - j}-1\right)}=\frac{q-1}{p^{\gcd\left(k,{j^\prime - j}\right)}-1}
		\end{align*}
		choices for $0\leq t\leq q-2$, or equivalently such $v\in \mathbb{F}_q^*$. Hence, there are $q^2-q - \frac{q-1}{p^{\gcd\left(k,{j^\prime - j}\right)}-1}q$ choices of $u\in \mathbb{F}_q$ and $v\in \mathbb{F}_q^*$ such that no point is fixed. 
		We conclude in the latter case that a vertex in $V_{i,j}$ is adjacent to $q^2-q - \frac{q-1}{p^{\gcd\left(k,{j^\prime - j}\right)}-1}q$ vertices in $V_{i^\prime,j^\prime}$.
	\end{proof}
	\begin{rmk}
		We note that Lemma~\ref{lem:across} implicitly implies that the characteristic $p$ is odd. Indeed, if $p=2$, then for any $j^\prime$ and $j$ such that $j^\prime-j$ and $k$ are coprime, we have
		\begin{align*}
			q^2-q-\left(\frac{q-1}{p^{\gcd\left(k,{j^\prime - j}\right)}-1}\right)q = 0.
		\end{align*}
		Hence in characteristic $2$, Lemma~\ref{lem:across} is false since we would still get a coclique in this case, even when $i\not\equiv i^\prime  \pmod{p^{j^\prime - j}}$. In the next lemma, we state the analogue of Lemma~\ref{lem:across} in characteristic $2$.
	\end{rmk}
	\begin{lem}
		Assume that $p=2$. For any $0\leq i,i^\prime\leq q-2$ and for any distinct $0\leq j<j^\prime \leq k-1$, the subgraph of $\Gamma_q$ induced by $V_{i,j} \cup V_{i^\prime,j^\prime}$ is a coclique if and only if $i^{\prime}\equiv i \pmod {p^{j^\prime-j}-1}$ or $\gcd(j^\prime -j,k) = 1 $. If $V_{i,j} \cup V_{i^\prime,j^\prime}$ is not a coclique of $\Gamma_q$, then a vertex of $V_{i,j}$ is adjacent to exactly 
		\begin{align*}
			q^2-q-\left(\frac{q-1}{p^{\gcd(k,j^\prime - j)}-1}\right)q
		\end{align*}
		vertices in $V_{i^\prime,j^\prime}$.\label{lem:across-even}
	\end{lem}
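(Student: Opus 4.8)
The plan is to rerun the proof of Lemma~\ref{lem:across}, isolating the one place where oddness of $p$ was used. Recall from the reduction carried out just before Lemma~\ref{lem:across} that the edges of $\Gamma_q$ between $V_{i,j}$ and $V_{i',j'}$ coincide with the edges between the identity and $V_{(i'-i)p^{k-j},\,j'-j}$, and that $\omega^{(i'-i)p^{k-j}}\varphi^{j'-j}\left[\begin{smallmatrix}1&u\\0&v\end{smallmatrix}\right]$ fixes a nonzero vector $[a\ b]^{t}$ exactly when \eqref{eq:matrix} holds. So $V_{i,j}\cup V_{i',j'}$ is a coclique iff \eqref{eq:matrix} has a nonzero solution for every $u\in\mathbb{F}_q$ and $v\in\mathbb{F}_q^{*}$, and when it is not a coclique the number of vertices of $V_{i',j'}$ adjacent to a fixed vertex of $V_{i,j}$ is $q^{2}-q$ minus the number of pairs $(u,v)$ for which \eqref{eq:matrix} is solvable. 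The whole statement therefore reduces to counting those pairs.

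First I would treat the branch $b=0$. Here the first equation of \eqref{eq:matrix} becomes $a^{p^{j'-j}-1}=\omega^{(i'-i)p^{k-j}}$, which has a solution $a\in\mathbb{F}_q^{*}$ iff $\omega^{(i'-i)p^{k-j}}$ lies in the image $\langle\omega^{\,p^{j'-j}-1}\rangle$ of the $(p^{j'-j}-1)$-th power map of $\mathbb{F}_q^{*}$; this subgroup has order $\frac{q-1}{\gcd(q-1,\,p^{j'-j}-1)}=\frac{q-1}{p^{\gcd(k,\,j'-j)}-1}$, and as in \eqref{eq:equivalent} the membership is equivalent to $i'\equiv i\pmod{p^{j'-j}-1}$. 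The condition is independent of $u$ and $v$, so whenever it holds $V_{i,j}\cup V_{i',j'}$ is a coclique. The decisive new feature in characteristic $2$ is that if $\gcd(k,j'-j)=1$ then $p^{\gcd(k,j'-j)}-1=1$, so $\langle\omega^{\,p^{j'-j}-1}\rangle=\mathbb{F}_q^{*}$: the equation $a^{p^{j'-j}-1}=\omega^{(i'-i)p^{k-j}}$ is solvable unconditionally, every element of $V_{(i'-i)p^{k-j},\,j'-j}$ fixes some $[a\ 0]^{t}$, and $V_{i,j}\cup V_{i',j'}$ is a coclique no matter what $i,i'$ are. This is precisely the case in which the argument of Lemma~\ref{lem:across} collapses (the Remark above), and it accounts for the extra disjunct in the statement.

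For the complementary range $\gcd(k,j'-j)\geq 2$ (so $p^{\gcd(k,j'-j)}-1\geq 3$) the argument of Lemma~\ref{lem:across} carries over essentially verbatim, with $p=2$ only simplifying matters since all signs are trivial. In the branch $b\neq 0$, the second equation of \eqref{eq:matrix} reads $b^{\,p^{j'-j}-1}=v^{-p^{j'-j}}\omega^{-(i'-i)p^{k-j}}$, which is solvable in $b$ iff $v$ lies in a single fixed coset of $\langle\omega^{\,p^{j'-j}-1}\rangle$; and, exactly as in the second part of Lemma~\ref{lem:across}, once such a $b$ exists the first equation of \eqref{eq:matrix} can be solved for $a$ for every $u\in\mathbb{F}_q$. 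Hence the non-derangements of $V_{(i'-i)p^{k-j},\,j'-j}$ are exactly those with $v$ in that coset — there are $\frac{q-1}{p^{\gcd(k,\,j'-j)}-1}$ such $v$, each paired with all $q$ values of $u$ — giving the adjacency count $q^{2}-q-\left(\frac{q-1}{p^{\gcd(k,\,j'-j)}-1}\right)q$; note this expression collapses to $0$ when $\gcd(k,j'-j)=1$, consistent with the coclique conclusion there. Putting the branches together: $V_{i,j}\cup V_{i',j'}$ is a coclique iff $\gcd(k,j'-j)=1$, or else — when $\gcd(k,j'-j)\geq 2$ — the $b=0$ branch already forces a fixed point for all $u,v$, i.e.\ $i'\equiv i\pmod{p^{j'-j}-1}$; if both fail, choosing $v=\omega^{t_0}$ outside the coset exhibits a derangement, so it is not a coclique.

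The part I expect to be the main obstacle is the forward direction when $\gcd(k,j'-j)\geq 2$: showing that the failure of $i'\equiv i\pmod{p^{j'-j}-1}$ genuinely forces a derangement. This rests on (i) the existence of a value $v=\omega^{t_0}$ lying outside the relevant coset of $\langle\omega^{\,p^{j'-j}-1}\rangle$, which is exactly the inequality $\frac{q-1}{p^{\gcd(k,\,j'-j)}-1}<q-1$ — the very inequality that fails when $\gcd(k,j'-j)=1$ — and (ii) checking that for such a $v$ the first equation of \eqref{eq:matrix} has no solution either, so $b=0$ is forced and the congruence must hold. A secondary, purely bookkeeping point is to phrase the final characterization so that the ``or $\gcd(j'-j,k)=1$'' disjunct dovetails cleanly with the modulus $p^{j'-j}-1$ in the first disjunct.
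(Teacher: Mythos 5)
Your proposal is correct and takes essentially the approach the paper intends: Lemma~\ref{lem:across-even} is stated without proof, deferring implicitly to the argument of Lemma~\ref{lem:across}, and your adaptation isolates exactly the right point — the only use of odd characteristic is the existence of a $v=\omega^{t_0}$ outside the coset $\langle\omega^{p^{j^\prime-j}-1}\rangle\omega^{(i^\prime-i)p^{k-j^\prime}}$, i.e.\ the inequality $\frac{q-1}{p^{\gcd(k,j^\prime-j)}-1}<q-1$, whose failure when $p=2$ and $\gcd(k,j^\prime-j)=1$ makes the $b=0$ branch produce a fixed vector unconditionally and yields the extra disjunct. (The residual imprecision about whether the modulus should be $p^{j^\prime-j}-1$ or $p^{\gcd(k,j^\prime-j)}-1$ is inherited from the paper's own statements, not introduced by you.)
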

	
	Recall that for any $0\leq t\leq q-2$ and $0\leq j\leq k-1$, 
	\begin{align*}
		H_{t,j} = 
		\left\{
		\omega^i\varphi^j
		\begin{bmatrix}
			1& z\\
			0& \omega^{t+ip^{k-j}}
		\end{bmatrix}
		:
		0\leq j\leq k-1, z\in \mathbb{F}_q
		\right\}
		=\left\{
		\varphi^j
		\begin{bmatrix}
			x& z\\
			0& \omega^{t}
		\end{bmatrix}
		:
		x\in \mathbb{F}_q^*, z \in \mathbb{F}_q
		\right\}.
	\end{align*}
	
	\begin{lem}
		Assume that $p$ is an odd prime. For any $0\leq t,t^\prime\leq q-2$ and for any distinct $0\leq j,j^\prime \leq k-1$.
		\begin{enumerate}[(i)]
			\item The subgraph of $\Gamma_q$ induced by $H_{t,j} \cup H_{t^\prime,j^\prime}$ is a coclique if and only if $t^{\prime}\equiv t \pmod {p^{j^\prime-j}-1}$.\label{first}
			 \item If $H_{t,j} \cup H_{t^\prime,j^\prime}$ is not a coclique then a vertex of $H_{i,j}$ is adjacent to exactly
			 \begin{align*}
			 	q^2-q-\left(\frac{q-1}{p^{\gcd(j-j^\prime,k)}-1}\right)q
			 \end{align*}
			 vertices of $H_{i^\prime,j^\prime}$.\label{second}
			\end{enumerate}
			\label{lem:across-horizontal}
	\end{lem}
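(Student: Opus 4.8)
\emph{Proof proposal.} The plan is to imitate the proof of Lemma~\ref{lem:across}, using that $\Gamma_q$ is a Cayley graph of $\gammal{2}{q}$, so that left translation is a graph automorphism. Write the elements of $H_{t,j}$ and $H_{t^\prime,j^\prime}$ in the coset form $\varphi^{j}\begin{bmatrix}x & z\\ 0 & \omega^{t}\end{bmatrix}$, assume without loss of generality that $j<j^\prime$, and put $m=j^\prime-j\in\{1,\dots,k-1\}$. First I would carry out a computation entirely analogous to the one preceding Lemma~\ref{lem:across}: for $g=\varphi^{j}\begin{bmatrix}x&z\\0&\omega^{t}\end{bmatrix}\in H_{t,j}$ and $h=\varphi^{j^\prime}\begin{bmatrix}x^\prime&z^\prime\\0&\omega^{t^\prime}\end{bmatrix}\in H_{t^\prime,j^\prime}$ one obtains
\begin{align*}
g^{-1}h=\varphi^{m}\begin{bmatrix}u & w\\ 0 & \omega^{s}\end{bmatrix}\in H_{s,m},\qquad u\in\mathbb{F}_q^*,\ w\in\mathbb{F}_q,\ s\equiv t^\prime-t\,p^{k-m}\pmod{q-1},
\end{align*}
and, crucially, for a fixed $g\in H_{t,j}$ the element $g^{-1}h$ runs over \emph{all} of $H_{s,m}$ as $h$ runs over $H_{t^\prime,j^\prime}$ — here one uses that $x^\prime\mapsto(x^\prime)^{p^{m}}$ and $z^\prime\mapsto(z^\prime)^{p^{m}}$ are bijections of $\mathbb{F}_q$, so that $u$ and $w$ become arbitrary. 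Thus both assertions reduce to a statement about a single coset: $H_{t,j}\cup H_{t^\prime,j^\prime}$ is a coclique of $\Gamma_q$ if and only if $H_{s,m}$ contains no derangement, and a vertex of $H_{t,j}$ has exactly as many neighbours in $H_{t^\prime,j^\prime}$ as there are derangements in $H_{s,m}$.

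Next I would decide which elements of $H_{s,m}$ are derangements. The element $\varphi^{m}\begin{bmatrix}x&z\\0&\omega^{s}\end{bmatrix}$ fixes a non-zero vector $\begin{bmatrix}a\\b\end{bmatrix}$ precisely when
\begin{align*}
(xa+zb)^{p^{m}}=a\qquad\text{and}\qquad\omega^{sp^{m}}b^{p^{m}}=b.
\end{align*}
Taking $b=0$ reduces this to $a^{p^{m}-1}=x^{-p^{m}}$, which has a solution $a\in\mathbb{F}_q^*$ if and only if $x$ lies in the subgroup $\langle\omega^{p^{m}-1}\rangle$ of $(p^{m}-1)$-st powers of $\mathbb{F}_q^*$, a subgroup of order $\frac{q-1}{p^{\gcd(m,k)}-1}$ (independently of $z$ and $s$). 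Taking $b\neq 0$, the second equation forces $b^{p^{m}-1}=\omega^{-sp^{m}}$, which is solvable exactly when $(p^{\gcd(m,k)}-1)\mid s$; and when it is, after fixing such a $b$ the first equation becomes the additive equation $x^{p^{m}}a^{p^{m}}-a=-(zb)^{p^{m}}$ in $a$, whose left-hand side defines an $\mathbb{F}_p$-linear map of $\mathbb{F}_q$ with trivial kernel as soon as $x\notin\langle\omega^{p^{m}-1}\rangle$, hence a bijection, so a solution exists for every $z$. Combining the two cases, $\varphi^{m}\begin{bmatrix}x&z\\0&\omega^{s}\end{bmatrix}$ is a derangement if and only if $x\notin\langle\omega^{p^{m}-1}\rangle$ and $(p^{\gcd(m,k)}-1)\nmid s$.

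The remainder is bookkeeping. Since $1\le m\le k-1$ gives $\gcd(m,k)<k$, the subgroup $\langle\omega^{p^{m}-1}\rangle$ is proper in $\mathbb{F}_q^*$, so $H_{s,m}$ contains a derangement if and only if $(p^{\gcd(m,k)}-1)\nmid s$; and since $\gcd(m,k)\mid(k-m)$, so that $p^{k-m}\equiv 1\pmod{p^{\gcd(m,k)}-1}$, this is equivalent to $t^\prime\equiv t\pmod{p^{\gcd(j^\prime-j,k)}-1}$, which is the coclique criterion sought in part~(i). When $H_{t,j}\cup H_{t^\prime,j^\prime}$ fails to be a coclique, the number of derangements in $H_{s,m}$ equals the number of pairs $(x,z)\in\mathbb{F}_q^*\times\mathbb{F}_q$ with $x\notin\langle\omega^{p^{m}-1}\rangle$, that is
\begin{align*}
q\left((q-1)-\frac{q-1}{p^{\gcd(m,k)}-1}\right)=q^{2}-q-\left(\frac{q-1}{p^{\gcd(m,k)}-1}\right)q,
\end{align*}
giving part~(ii); the case $j>j^\prime$ follows by relabelling, the neighbour count being symmetric (double-count the edges between $H_{t,j}$ and $H_{t^\prime,j^\prime}$) and $\gcd(j-j^\prime,k)=\gcd(j^\prime-j,k)$. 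The one genuinely delicate point is the reduction step — verifying that $g^{-1}h$ sweeps out \emph{all} of $H_{s,m}$ and not merely a sub-coset; once that is secured, the split of the derangement condition into the $b=0$ and $b\neq 0$ cases is clean and, pleasantly, avoids the more intricate ``solvability in $a$ given a solution in $b$'' analysis needed for Lemma~\ref{lem:across}.
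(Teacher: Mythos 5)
Your proof is correct and follows essentially the same route as the paper's: both reduce adjacency between $H_{t,j}$ and $H_{t^\prime,j^\prime}$ to the fixed-point system for $g^{-1}h$, split on $b=0$ versus $b\neq 0$, and count via the index of $\langle\omega^{p^{m}-1}\rangle$ in $\mathbb{F}_q^*$; your packaging of the reduction as a bijection onto the single coset $H_{s,m}$ is just cleaner bookkeeping of the same argument, and your linearized-map justification of the ``$b$ solvable $\Rightarrow$ $a$ solvable'' step fills in a point the paper only asserts. One small repair: the properness of $\langle\omega^{p^{m}-1}\rangle$ in $\mathbb{F}_q^*$ follows from $p$ being odd (so that its index $p^{\gcd(m,k)}-1$ is at least $2$), not from $\gcd(m,k)<k$ --- the latter also holds when $p=2$ and $\gcd(m,k)=1$, where the subgroup is all of $\mathbb{F}_q^*$ and the statement fails (cf.\ Lemma~\ref{lem:across-even-horizontal}).
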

	\begin{proof}
		\eqref{first} Assume that $H_{t,j} \cup H_{t^\prime,j^\prime}$ is a coclique. Consider the elements $\omega^i\varphi^j \begin{bmatrix}
			1 & z\\
			0 & \omega^{t-ip^{k-j}}
		\end{bmatrix} \in H_{t,j}$ and $\omega^{i^\prime} \varphi^{j^\prime}\begin{bmatrix}
		1 & z^\prime\\
		0 & \omega^{t^\prime - i^\prime p^{k-j^\prime}}
		\end{bmatrix} \in H_{t^\prime,j^\prime}$, for some $0\leq i,i^\prime\leq q-2$. We have
		\begin{align*}
			\left(\omega^i\varphi^j \begin{bmatrix}
				1 & z\\
				0 & \omega^{t-ip^{k-j}}
			\end{bmatrix}\right)^{-1}\left(\omega^{i^\prime} \varphi^{j^\prime}\begin{bmatrix}
			1 & z^\prime\\
			0 & \omega^{t^\prime - i^\prime p^{k-j^\prime}}
			\end{bmatrix}\right)
			&=
			\begin{bmatrix}
				1 & z\\
				0 & \omega^{t-ip^{k-j}}
			\end{bmatrix}^{-1}
			\varphi^{-j}\omega^{i^\prime-i}\varphi^{j^\prime}\begin{bmatrix}
				1 & z^\prime\\
				0 & \omega^{t^\prime - i^\prime p^{k-j^\prime}}
			\end{bmatrix}\\
			&=
			\omega^{\left(i^\prime-i\right)p^{k-j}}
			\varphi^{j^\prime-j}
			\begin{bmatrix}
				1 & u\\
				0 & \omega^{  t^\prime-tp^{j-j^\prime}+ip^{k-j^\prime} - i^\prime p^{k-j^\prime}}
			\end{bmatrix}
		\end{align*}
		for some $u\in \mathbb{F}_q$. If this element fixes $[a\ b]^t$, then we have
		\begin{align}
			\begin{cases}
				a^{p^{j-j^\prime}} &= \omega^{(i^\prime - i)p^{k-j^\prime}}a +ub\\
				b^{p^{j-j^\prime}} &= b\omega^{t^\prime - tp^{j-j^\prime}}.
			\end{cases}\label{eq:horizontal}
		\end{align}
		We may distinguish whether $b = 0$ or $b \neq 0$. If $b\neq 0$, then $t^\prime \equiv tp^{{j-j^\prime}} \equiv t \pmod{p^{j-j^\prime}-1}$. If $b = 0$, then we must have $i^\prime - i \equiv 0 \pmod {p^{j-j^\prime}-1}$. By choosing $i$ and $i^\prime$ such that $i\not \equiv i^\prime \pmod{p^{j-j^\prime}-1}$ (these always exist as long as the characteristic is odd), edges occur in  $H_{t,j} \cup H_{t^\prime,j^\prime}$, which is impossible. Thus, we must have $t^\prime \equiv t \pmod{p^{j-j^\prime}-1}$.
		
		Conversely, assume that $t^\prime \equiv t \pmod{p^{j-j^\prime}-1}$, and suppose that there is an edge between $H_{t,j}$ and $H_{t^\prime,j^\prime}$. So, there exists elements $\omega^i\varphi^j \begin{bmatrix}
			1 & z\\
			0 & \omega^{t-ip^{k-j}}
		\end{bmatrix} \in H_{t,j}$ and $\omega^{i^\prime} \varphi^{j^\prime}\begin{bmatrix}
			1 & z^\prime\\
			0 & \omega^{t^\prime - i^\prime p^{k-j^\prime}}
		\end{bmatrix} \in H_{t^\prime,j^\prime}$, for some $0\leq i,i^\prime\leq q-2$, such that
		\begin{align*}
			\left(\omega^i\varphi^j \begin{bmatrix}
				1 & z\\
				0 & \omega^{t-ip^{k-j}}
			\end{bmatrix}\right)^{-1}\left(\omega^{i^\prime} \varphi^{j^\prime}\begin{bmatrix}
				1 & z^\prime\\
				0 & \omega^{t^\prime - i^\prime p^{k-j^\prime}}
			\end{bmatrix}\right)
			&=
			\omega^{\left(i^\prime-i\right)p^{k-j}}
			\varphi^{j^\prime-j}
			\begin{bmatrix}
				1 & u\\
				0 & \omega^{  t^\prime-tp^{j-j^\prime}+ip^{k-j^\prime} - i^\prime p^{k-j^\prime}}
			\end{bmatrix}
		\end{align*}
		for some $u\in \mathbb{F}_q$, has no fixed point. One can immediately see that $i^\prime \not \equiv i \pmod{p^{j-j^\prime}-1}$ otherwise a solution exists by taking $b = 0$. Similarly, $t^\prime \not \equiv t \pmod{p^{j-j^\prime}-1}$, otherwise a solution in $b\in \mathbb{F}_q$ in \eqref{eq:horizontal} exists, which implies the existence of $a\in \mathbb{F}_q$ as well. Hence, we get a contradiction, and $H_{t,j}\cup H_{t^\prime,j^\prime}$ must be a coclique in $\Gamma_q$.
		
		\eqref{second} Assume that $H_{t,j} \cup H_{t^\prime,j^\prime}$ is not a coclique, that is, $t\not\equiv t^\prime \pmod{p^{j-j^\prime}-1}$. Again consider the elements $g = \omega^i\varphi^j \begin{bmatrix}
			1 & z\\
			0 & \omega^{t-ip^{k-j}}
		\end{bmatrix} \in H_{t,j}$ and $g^\prime = \omega^{i^\prime} \varphi^{j^\prime}\begin{bmatrix}
			1 & z^\prime\\
			0 & \omega^{t^\prime - i^\prime p^{k-j^\prime}}
		\end{bmatrix} \in H_{t^\prime,j^\prime}$, for some $0\leq i,i^\prime\leq q-2$. Assume that $[a \ b]^t$ if fixed by $g^{-1}g^\prime$. Using \eqref{eq:horizontal}, we can see that if $b\neq 0$ then we must have $t^\prime \equiv t\pmod {p^{j-j^\prime}-1}$, which is impossible. We deduce that, $b= 0$ and $i^\prime \equiv i \pmod{p^{j-j^\prime}-1}$. For any fixed $i$, there are exactly $\frac{q-1}{p^{\gcd(j-j^\prime, k)}-1}$ elements $i^\prime$ that satisfy the previous modular equation. For any such $i^\prime$, we may freely choose $z^\prime \in \mathbb{F}_q$ to obtain non-adjacent vertices. We conclude that a vertex of $H_{t,j}$ is adjacent
		\begin{align*}
			q^2-q-\left(\frac{q-1}{p^{\gcd(j-j^\prime,k)}-1}\right)q
		\end{align*}
		vertices of $H_{t^\prime,j^\prime}$.
	\end{proof}
	
	As we can see in the previous lemma, the statement does not hold in even characteristic. We give the characteristic $2$ analogue of the previous lemma.
	\begin{lem}
		Assume that $p=2$. For any $0\leq i,i^\prime\leq q-2$ and for any $0\leq j<j^\prime \leq k-1$, the subgraph of $\Gamma_q$ induced by $H_{t,j} \cup H_{t^\prime,j^\prime}$ is a coclique if and only if $t^{\prime}\equiv t \pmod {p^{j^\prime-j}-1}$ or $\gcd(j^\prime -j,k) = 1 $. If $H_{i,j} \cup H_{i^\prime,j^\prime}$ is not a coclique of $\Gamma_q$, then a vertex of $H_{i,j}$ is adjacent to exactly 
		\begin{align*}
			q^2-q-\left(\frac{q-1}{p^{\gcd(k,j^\prime - j)}-1}\right)q
		\end{align*}
		vertices in $H_{i^\prime,j^\prime}$.\label{lem:across-even-horizontal}
	\end{lem}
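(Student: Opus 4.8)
The plan is to mirror the proof of Lemma~\ref{lem:across-horizontal} essentially verbatim, isolating the single place where the hypothesis ``$p$ odd'' is really used and handling the newly permitted configuration $\gcd(j'-j,k)=1$ on its own. Throughout, fix $0\le j<j'\le k-1$ and put $d:=\gcd(j'-j,k)$. The first step is the reduction already carried out in the proof of Lemma~\ref{lem:across-horizontal}: for $g\in H_{t,j}$ and $g'\in H_{t',j'}$ the product $g^{-1}g'$ has the form $\omega^{(i'-i)p^{k-j}}\varphi^{\,j'-j}\begin{bmatrix}1 & u\\ 0 & \omega^{\,t'-tp^{j-j'}+(i-i')p^{k-j'}}\end{bmatrix}$ for some $u\in\mathbb{F}_q$, and a nonzero fixed vector $(a,b)$ of this element is precisely a solution $(a,b)\neq(0,0)$ of the system \eqref{eq:horizontal}. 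Hence $H_{t,j}\cup H_{t',j'}$ is a coclique if and only if \eqref{eq:horizontal} has a nonzero solution for every admissible $i,i',u$, and, when it is not, the number of neighbours of a vertex of $H_{t,j}$ in $H_{t',j'}$ equals $q^2-q$ minus the number of parameter choices for which a fixed vector exists.

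Next I would dispose of the exceptional case $d=1$, which (since $p=2$) forces $p^{d}-1=p-1=1$. Here $\gcd(p^{j-j'}-1,q-1)=p^{d}-1=1$, so the map $x\mapsto x^{p^{j-j'}-1}$ is a bijection of $\mathbb{F}_q^{*}$. Taking $b=0$ in \eqref{eq:horizontal}, the second equation holds automatically and the first reduces to $a^{p^{j-j'}-1}=\omega^{(i'-i)p^{k-j'}}$, which by bijectivity has a solution $a\in\mathbb{F}_q^{*}$; thus $g^{-1}g'$ always fixes the nonzero vector $(a,0)$, is never a derangement, and $H_{t,j}\cup H_{t',j'}$ is a coclique. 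This agrees with the claim, since the disjunct $\gcd(j'-j,k)=1$ holds and the adjacency formula $q^2-q-\frac{q-1}{p^{d}-1}\,q$ degenerates to $0$, which is the correct value because no non-coclique pair occurs in this regime.

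For the remaining case $d>1$ I would simply note that $\frac{q-1}{p^{d}-1}>1$, and that this inequality is the only property of ``$p$ odd'' that the proof of Lemma~\ref{lem:across-horizontal} actually exploits: it is what guarantees the existence of indices $i$, $i'$ with $\omega^{(i'-i)p^{k-j'}}\notin\langle\omega^{p^{j-j'}-1}\rangle$ (used to produce an edge in the ``only if'' direction of part~(i)), and it is what makes the count $\frac{q-1}{p^{d}-1}$ of admissible exponents in part~(ii) a genuine positive integer. Consequently the proof of Lemma~\ref{lem:across-horizontal} goes through word for word with $p=2$: $H_{t,j}\cup H_{t',j'}$ is a coclique if and only if $t'\equiv t\pmod{p^{j'-j}-1}$, and otherwise a vertex of $H_{t,j}$ has exactly $q^2-q-\frac{q-1}{p^{d}-1}\,q$ neighbours in $H_{t',j'}$. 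Combining this with the previous paragraph proves the lemma.

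The main obstacle is the bookkeeping in the last step: one must be certain that ``$p$ odd'' enters the proof of Lemma~\ref{lem:across-horizontal} only through $\frac{q-1}{p^{d}-1}>1$ (equivalently $p^{d}\neq 2$), and nowhere else, so that the transfer to characteristic $2$ is genuinely verbatim; and in the exceptional step one must check that the exceptional regime really does make that inequality fail while at the same time making the neighbour-count formula evaluate to $0$, so that the two cases dovetail into a single uniform statement. A minor subtlety worth flagging is that the congruences written ``$\pmod{p^{m}-1}$'' throughout should be read as membership in the subgroup $\langle\omega^{p^{m}-1}\rangle=\langle\omega^{p^{\gcd(m,k)}-1}\rangle$ of $\mathbb{F}_q^{*}$, so that the divisibility conditions appearing along the way are precisely the stated ones.
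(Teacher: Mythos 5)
Your proposal is correct and follows exactly the route the paper intends: the paper states Lemma~\ref{lem:across-even-horizontal} without proof, implicitly as the adaptation of Lemma~\ref{lem:across-horizontal} to $p=2$, and you correctly isolate the one place odd characteristic is used (the existence of $i\not\equiv i'\pmod{p^{j'-j}-1}$, i.e.\ $\tfrac{q-1}{p^{\gcd(j'-j,k)}-1}>1$) and correctly dispose of the degenerate case $\gcd(j'-j,k)=1$ by showing $b=0$ always yields a fixed vector. No gaps.
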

	
	Now, we are ready to give the proof of Theorem~\ref{thm1}.
	\section{The maximum cocliques of $\gammal{2}{q}$ in odd characteristic}\label{sect:cocliques-odd}
	Assume that $p$ is an odd prime and $q = p^k$. From Lemma~\ref{lem:EKR}, we know that $\gammal{2}{q}$ has the EKR property. As we have also seen in \eqref{eq:H}, a maximum intersecting set of $\gammal{2}{q}$ may be assumed to be contained in $\mathcal{H}_q$. Recall also that $(V_{i,j})$ is a partition of $\mathcal{H}_q$.
	
	Let $\mathcal{F}\subset \gammal{2}{q}$ be an intersecting set of maximum size contained in $\mathcal{H}_q$. We define $\mathcal{F}_{i,j} := \mathcal{F} \cap V_{i,j}$ for any $0\leq i\leq q-2$ and $0\leq j\leq k-1$. Moreover, for any $0\leq j\leq q-2$, we define the \itbf{$j$-th layer} of $\mathcal{F}$ to be $\mathcal{F}^{(j)}:=\mathcal{F}_{0,j} \cup \mathcal{F}_{1,j} \cup \ldots \cup \mathcal{F}_{q-2,j}$. Since the identity belongs to $\mathcal{F}$, note that $\mathcal{F}_{0,0} \neq \varnothing$.
		
		\vspace*{0.5cm}
		\noindent{\bf Claim~1.} {\it For any $0\leq j\leq k-1$, we have $|\mathcal{F}^{(j)}|=q(q-1)$.}
		\begin{proof}[Proof of Claim~1]
			Let $0\leq j\leq k-1.$  By Corollary~\ref{cor:one-layer}, the size of a maximum coclique in the subgraph of $\Gamma_q$ induced by $\mathcal{F}^{(j)} = V_{0,j}\cup V_{1,j}\cup \ldots\cup V_{q-2,j}$ is $q(q-1)$. Hence, $|\mathcal{F}^{(j)}|\leq q(q-1)$. Since $\mathcal{F}$ is a maximum intersecting set, we must have $|\mathcal{F}^{(j)}|= q(q-1)$ otherwise there would be $0\leq j^\prime \leq k-1$ such that $|\mathcal{F}^{(j^\prime)}|=|\mathcal{F}_{0,j^\prime} \cup \mathcal{F}_{1,j^\prime} \cup \ldots \cup \mathcal{F}_{q-2,j^\prime}|> q(q-1)$.
		\end{proof}
		The above claim tells us that such cocliques are balanced in each layer of $\mathcal{F}$ (corresponding to $0\leq j\leq q-2$) due to the fact that they are maximum. From Lemma~\ref{lem:same-layer}, we know that a coclique of size $q(q-1)$ is one of two types. For any $0\leq j\leq k-1$, since $|\mathcal{F}^{(j)}| = |\mathcal{F}_{0,j} \cup \mathcal{F}_{1,j} \cup \ldots \cup \mathcal{F}_{q-2,j}|=q(q-1)$, either there exists a unique $0\leq i \leq q-2$ such that $\mathcal{F}^{(j)} = \mathcal{F}_{i,j} = V_{i,j}$ or for every $0\leq i\leq q-2$, $|\mathcal{F}_{i,j}| = q$. If $\mathcal{F}_{i,j} = V_{i,j}$, then we say that $\mathcal{F}^{(j)}$ is a $j$-\itbf{vertical coclique}. If $|\mathcal{F}_{i,j}| = q$, for every $0\leq i\leq q-2$, then we say that $\mathcal{F}^{(j)}$ is a $j$-\itbf{horizontal coclique}.
		
		\vspace*{0.5cm}
		\noindent {\bf Claim~2.} {\it $\mathcal{F}$ cannot simultaneously contain a $j$-vertical coclique and a $j^\prime$-horizontal coclique, for $0\leq j,j^\prime\leq k-1$.}
		
		\begin{proof}[Proof of Claim~2]
			Let us prove this by contradiction. Assume that $\mathcal{F}^{(j)}$ is a vertical coclique and $\mathcal{F}^{(j^\prime)}$ is a horizontal coclique. Therefore, there exists $0\leq i\leq q-2$ such that $\mathcal{F}^{(j)} = \mathcal{F}_{i,j} = V_{i,j}$. Since $\mathcal{F}^{(j^\prime)}$ is a horizontal coclique, $|\mathcal{F}_{i^\prime,j^\prime}| = q$ for any $0\leq i^\prime\leq q-2$. Fix $0\leq i^\prime\leq q-2$ such that $i^\prime - i \not \equiv 0 \pmod {p^{j-j^\prime}-1}$. By Lemma~\ref{lem:across}, we know that $V_{i,j}\cup V_{i^\prime,j^\prime} = \mathcal{F}_{i,j}\cup V_{i^\prime,j^\prime}$ is not a coclique. Precisely, any vertex of $V_{i^\prime,j^\prime}$, and in particular any vertex of $\mathcal{F}_{i^\prime,j^\prime}$, is adjacent to $q^2-q - \frac{q-1}{p^{\gcd(j-j^\prime,k)-1}}q$ of $V_{i,j} = \mathcal{F}_{i,j}$. Therefore, we obtain a contradiction.
		\end{proof}
		
		\vspace*{0.5cm}
		\noindent {\bf Claim~3.} {\it  If $\mathcal{F}^{(0)}$ is a horizontal coclique, then 
			$
				\mathcal{F} = H_{0,0} \cup H_{t_1(p-1),1} \cup \ldots \cup H_{t_j(p^j-1),j} \cup \ldots \cup H_{t_{k-1}(p^{k-1}-1),k-1},
			$
			such that 
			\begin{align*}
				\begin{cases}
					t_0 = 0&\\
					0\leq t_j\leq \frac{q-1}{p^{\gcd(j,k)}-1}-1 &\mbox{ for }\ 0\leq j\leq k-1\\
					t_j(p^j-1) \equiv t_{j^\prime}(p^{j^\prime}-1) \pmod{p^{j^\prime-j}-1}&\mbox{ for any $0\leq j<j^\prime \leq k-1$.}
				\end{cases}
		\end{align*}}
		\begin{proof}[Proof of Claim~3]
			Since $\mathcal{F}^{(0)}$ is a horizontal coclique and $\mathcal{F}_{0,0} \neq \varnothing$, by Lemma~\ref{lem:same-layer}, we deduce that $|\mathcal{F}_{0,0}| = q$. Moreover, we must have $\mathcal{F}^{(0)} = H_{0,0}$ and $\mathcal{F}_{i,0} = \omega^i\mathcal{E}(\omega^{-i})$ for $0\leq i\leq q-2$ (see \eqref{eq:epsilon} for the definition of $\varepsilon(\omega^{-i})$). By Claim~2, we know that $\mathcal{F}^{(j)}$ is a horizontal coclique for all $0\leq j\leq k-1$. Hence, we must have 
			\begin{align*}
				\mathcal{F} = H_{0,0} \cup H_{i_1,1} \cup \ldots \cup H_{i_{k-1},k-1}
			\end{align*}
			for some $0\leq i_1,i_2,\ldots,i_{k-1} \leq q-2$. By Lemma~\ref{lem:across-horizontal}, we know that $i_j\equiv 0 \pmod {p^{j}-1}$ for any $0\leq j\leq k-1$. Hence, for any $0\leq j\leq k-1$ there exists $0\leq t_j \leq \frac{q-1}{p^{\gcd(j,k)}-1}-1$ such that
			$i_j = t_j(p^j-1)$. Further, by Lemma~\ref{lem:across-horizontal}, we know that $t_j(p^j-1) \equiv t_{j^\prime} (p^{j^\prime}-1) \pmod {p^{j^\prime - j}-1}$ for any $0\leq j,j^\prime \leq k-1$. This completes the proof.
		\end{proof}
		
		\vspace*{0.5cm}
		\noindent {\bf Claim~4.} {\it  If $\mathcal{F}^{(0)}$ is a vertical coclique, then 
		$
			\mathcal{F} = V_{0,0} \cup V_{r_1(p-1),1} \cup \ldots \cup V_{r_j(p^j-1),i} \cup \ldots \cup V_{r_{k-1}(p^{k-1}-1),k-1},
		$
		such that 
		\begin{align*}
			\begin{cases}
				r_0 = 0&\\
				0\leq r_j\leq \frac{q-1}{p^{\gcd(j,k)}-1}-1 &\mbox{ for }\ 0\leq j\leq k-1\\
				r_j(p^j-1) \equiv r_{j^\prime}(p^{j^\prime}-1) \pmod{p^{j^\prime-j}-1}&\mbox{ for any $0\leq j<j^\prime \leq k-1$.}
			\end{cases}
			\end{align*}}
		\begin{proof}[Proof of Claim~4]
			Since $\mathcal{F}^{(0)}$ is a vertical coclique, we know from Claim~2 that $\mathcal{F}^{(j)}$ are vertical cocliques for all $0\leq j\leq k-1$. Moreover, there exists $0\leq i_1,i_2,\ldots,i_{k-1}\leq k-1$ such that $\mathcal{F}_{i,j} = V_{i_j,j}$ and thus
			\begin{align*}
				\mathcal{F} = V_{0,0} \cup V_{i_1,1}\cup \ldots V_{i_{k-1},k-1}. 
			\end{align*}
			By Lemma~\ref{lem:across}, we know that $i_j\equiv 0 \pmod {p^j-1}$, for any $0\leq j\leq k-1$. For any $0\leq j\leq k-1$, there exists $0\leq r_j \leq \frac{q-1}{p^{\gcd(j,k)}-1}-1$ such that $i_j = r_j(p^j-1)$. By Lemma~\ref{lem:across} again, we conclude that for any $0\leq j,j^\prime \leq k-1$ we also have $r_j(p^j-1) \equiv r_{j^\prime} (p^{j^\prime}-1) \pmod{p^{j^\prime -p}-1}$.
		\end{proof}
		
		This completes the proof of Theorem~\ref{thm1}. The proof of Corollary~\ref{cor:two-layers} also follows easily from this. If $\mathcal{F}$ is a maximum coclique containing the identity, then by Claim~3 and Claim~4, either $\mathcal{F} = H_{0,0} \cup H_{t_1(p-1),1}$ or $\mathcal{F} = V_{0,0} \cup V_{r_1(p-1),1}$. It is not hard to see that $ V_{0,0} \cup V_{r_1(p-1),1} = \mathcal{N}_q \rtimes \langle\omega^{r_1(p-1)}\varphi \rangle$. Similarly, $H_{0,0} \cup H_{t_1(p-1),1} = \left\{ \omega^i\mathcal{E}(\omega^{-i}) :\ 0\leq i\leq q-2\right\} \rtimes \langle \omega^{t_1(p-1)}\varphi \rangle$. These are conjugate to the stabilizer of $e_1$ and $e_2+\langle e_1\rangle$ respectively.
	
	\section{The maximum cocliques of $\gammal{2}{q}$ In even characteristic}\label{sect:cocliques-even}
	
	Throughout this section, we assume that $q = 2^k$ for some integer $k\geq 1$. Let $\mathcal{F}\subset \gammal{2}{q}$ be an intersecting set of maximum size contained in $\mathcal{H}_q$. We define $\mathcal{F}_{i,j} := \mathcal{F} \cap V_{i,j}$ for any $0\leq i\leq q-2$ and $0\leq j\leq k-1$. Moreover, for any $0\leq j\leq q-2$, we define the \itbf{$j$-th layer} of $\mathcal{F}$ to be $\mathcal{F}^{(j)}:=\mathcal{F}_{0,j} \cup \mathcal{F}_{1,j} \cup \ldots \cup \mathcal{F}_{q-2,j}$. Since the identity belongs to $\mathcal{F}$, note that $\mathcal{F}_{0,0} \neq \varnothing$.
	
	\begin{thm}
		Assume that $q = 2^k$ such that $k$ is not a prime power. The group $\gammal{2}{q}$ acting on non-zero vectors of $\mathbb{F}_q^2$ has the EKR property, and in particular any intersecting set has size at most $kq(q-1)$.  In addition, an intersecting set of maximum size in $\gammal{2}{q}$ is one of the following two types.
		\begin{enumerate}[1)]
			\item Vertical types: 
			\begin{align*}
				V_{0,0} \cup V_{r_1,1} \cup \ldots \cup V_{r_j,j} \cup \ldots \cup V_{r_{k-1},k-1},
			\end{align*}
			such that 
			\begin{align*}
				\begin{cases}
					r_0 = 0 &\\
					0\leq  r_j\leq q-2& \mbox{ for any }0\leq j\leq k-1,\\
					\gcd(j^\prime-j,k) = 1 \mbox{ or }
					r_j \equiv r_{j^\prime} \pmod{p^{j^\prime-j}-1} & \mbox{ for any $0\leq j<j^\prime \leq k-1$.}
				\end{cases}
			\end{align*}
			
			\item Horizontal types:
			\begin{align*}
				H_{0,0} \cup H_{t_1,1} \cup \ldots \cup H_{t_i,i} \cup \ldots \cup H_{t_{k-1},k-1},
			\end{align*}
			such that 
			\begin{align*}
				\begin{cases}
					t_0 = 0 &\\
					0\leq t_j\leq q-2 &\mbox{ for }\ 0\leq j\leq k-1\\
					\gcd(j^\prime-j,k) = 1 \mbox{ or }t_j \equiv t_{j^\prime} \pmod{p^{j^\prime-j}-1}&\mbox{ for any $0\leq j<j^\prime \leq k-1$.}
				\end{cases}
			\end{align*}
		\end{enumerate}
		\label{thm3}
	\end{thm}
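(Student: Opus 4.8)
The plan is to follow essentially the same structure as the odd-characteristic argument in Section~\ref{sect:cocliques-odd}, but using the even-characteristic versions of the adjacency lemmas, namely Lemma~\ref{lem:across-even} and Lemma~\ref{lem:across-even-horizontal}. The EKR property is already given by Lemma~\ref{lem:EKR}, so it only remains to characterize the maximum intersecting sets. Fix a maximum intersecting set $\mathcal{F}$ contained in $\mathcal{H}_q$ with $\mathrm{id}\in\mathcal{F}$, and decompose it into layers $\mathcal{F}^{(j)}$ as in the odd case. The first step is to re-establish the analogue of Claim~1: by Corollary~\ref{cor:one-layer} each layer satisfies $|\mathcal{F}^{(j)}|\le q(q-1)$, and maximality of $\mathcal{F}$ forces equality in every layer, hence $|\mathcal{F}|=kq(q-1)$. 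Then, exactly as in the odd case, Lemma~\ref{lem:same-layer} shows each $\mathcal{F}^{(j)}$ is either a $j$-vertical coclique (some $\mathcal{F}_{i,j}=V_{i,j}$ and all others empty) or a $j$-horizontal coclique ($|\mathcal{F}_{i,j}|=q$ for all $i$); note Lemma~\ref{lem:same-layer} holds for all characteristics, so this dichotomy is unchanged.

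Next I would prove the even-characteristic analogue of Claim~2: $\mathcal{F}$ cannot contain both a $j$-vertical coclique and a $j'$-horizontal coclique. Suppose $\mathcal{F}^{(j)}=V_{i,j}$ is vertical and $\mathcal{F}^{(j')}$ is horizontal; we need an index $i'$ with $\mathcal{F}_{i',j'}\neq\varnothing$ such that $V_{i,j}\cup V_{i',j'}$ is \emph{not} a coclique, which by Lemma~\ref{lem:across-even} requires simultaneously $i'\not\equiv i\pmod{p^{|j'-j|}-1}$ and $\gcd(j'-j,k)\neq 1$. The second condition is where the hypothesis that $k$ is not a prime power becomes essential: it guarantees that for \emph{every} pair $j\neq j'$ in $\{0,\dots,k-1\}$ we can at least arrange $\gcd(j'-j,k)\neq 1$ by choosing the layers appropriately — but actually a subtler point arises, since $j$ and $j'$ are \emph{not} ours to choose, they are dictated by which layers are vertical vs horizontal. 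The real use of ``$k$ not a prime power'' must be: whenever $p^{|j'-j|}-1 < q-1$ (so that a non-congruent $i'$ exists) we want the corresponding edge-count $q^2-q-\tfrac{q-1}{p^{\gcd(k,j'-j)}-1}q$ to be nonzero, i.e. $\gcd(k,j'-j)\neq 1$; and when $\gcd(k,j'-j)=1$ we have $p^{|j'-j|}-1 \mid q-1$ trivially only if $|j'-j|\mid k$, which forces enough structure. This is the step I expect to be the main obstacle: carefully verifying that the hypothesis ``$k$ is not a prime power'' is exactly what rules out the mixed vertical/horizontal configuration, and equally that it makes the off-diagonal constraints in the conclusion ($\gcd(j'-j,k)=1$ \emph{or} $r_j\equiv r_{j'}$) both necessary and sufficient.

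The final step splits into the two cases, mirroring Claim~3 and Claim~4. If $\mathcal{F}^{(0)}$ is horizontal, then by the even Claim~2 every layer is horizontal, so $\mathcal{F}=H_{0,0}\cup H_{t_1,1}\cup\dots\cup H_{t_{k-1},k-1}$; Lemma~\ref{lem:across-even-horizontal} then tells us that for each pair $j<j'$ either $\gcd(j'-j,k)=1$ (in which case $H_{t_j,j}\cup H_{t_{j'},j'}$ is automatically a coclique, imposing no constraint) or $t_j\equiv t_{j'}\pmod{p^{j'-j}-1}$ is forced; conversely any such tuple yields a coclique of the right size by the same lemma. Normalizing $t_0=0$ uses that $\mathrm{id}\in\mathcal{F}$ lies in $H_{0,0}$. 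The vertical case is identical with $V$ in place of $H$ and Lemma~\ref{lem:across-even} in place of Lemma~\ref{lem:across-even-horizontal}. Both directions — that a maximum intersecting set must have this form, and that every set of this form is a maximum intersecting set — follow by assembling the pairwise-coclique conditions across all $\binom{k}{2}$ pairs of layers, using that a maximum coclique in a lexicographic product $\widetilde{K}_{q-1}^{q-1}[\overline{K_q}]$ structure is controlled layer-by-layer. I would close by remarking that when $k$ \emph{is} a prime power the above breaks down precisely because $\gcd(j'-j,k)=1$ can fail to give a coclique only in degenerate ways, which is why that case is deferred to Theorem~\ref{thm4}.
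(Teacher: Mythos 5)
Your proposal follows the same overall architecture as the paper's proof (layer decomposition, per-layer size $q(q-1)$, the vertical/horizontal dichotomy from Lemma~\ref{lem:same-layer}, then the pairwise conditions from Lemma~\ref{lem:across-even} and Lemma~\ref{lem:across-even-horizontal}), and the final assembly of the pairwise constraints is correct. However, there is a genuine gap at exactly the step you flag as ``the main obstacle'': you never actually prove the even-characteristic analogue of Claim~2, i.e.\ that $\mathcal{F}$ cannot mix a $j$-vertical layer with a $j'$-horizontal layer. For a pair $(j,j')$ with $\gcd(j'-j,k)=1$, Lemma~\ref{lem:across-even} says $V_{i,j}\cup V_{i',j'}$ is a coclique for \emph{all} $i,i'$, so there is genuinely no direct constraint between those two layers and no choice of $i'$ can produce the contradiction; your attempted explanation in terms of when the edge-count $q^2-q-\frac{q-1}{p^{\gcd(k,j'-j)}-1}q$ is nonzero does not resolve this, because for such pairs it really is zero.

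The missing idea is a connectivity/propagation argument. Consider the layer indices $0,\dots,k-1$ with $j\sim j'$ whenever $\gcd(j'-j,k)\neq 1$; for such directly-interacting pairs the odd-characteristic Claim~2 argument goes through verbatim (choose $i'\not\equiv i\pmod{2^{|j'-j|}-1}$, which exists since $\frac{q-1}{2^{\gcd(j'-j,k)}-1}>1$ here), so any two directly-interacting layers must have the same type. When $k$ has at least two distinct prime divisors $s_1,s_2$, the differences $s_1$ and $s_2$ both lie in the connection set and generate $\mathbb{Z}_k$, so this layer graph is connected and the type of $\mathcal{F}^{(0)}$ propagates along chains to every layer — this is precisely where ``$k$ is not a prime power'' enters, and precisely why the paper first establishes that $\Gamma_q$ (restricted to $\mathcal{H}_q$) is connected in this case and disconnected (with $s$ components) when $k=s^{\ell}$, deferring the latter to Theorem~\ref{thm4}. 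Without this propagation step your case split ``if $\mathcal{F}^{(0)}$ is horizontal then every layer is horizontal'' is unjustified.
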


	\begin{thm}
		Assume that $q = 2^k$ such that $k = s^\ell$ is a prime power. The group $\gammal{2}{q}$ acting on non-zero vectors of $\mathbb{F}_q^2$ has the EKR property, and in particular any intersecting set has size at most $kq(q-1)$.  In addition, if an intersecting set of maximum size in $\gammal{2}{q}$, then 
		\begin{align*}
			\mathcal{F} = F_0\cup \omega^{i_1}\varphi^s F_1 \cup \ldots \cup \omega^{i_{s-1}}\varphi^{s-1}F_{\ell-1}
		\end{align*}
		where $F_j$, for every $0\leq j\leq \ell - 1$, is of the following types.
		\begin{enumerate}[1)]
			\item Vertical types. 
			\begin{align*}
				V_{0,0} \cup V_{r_1,s} \cup \ldots \cup V_{r_j,sj} \cup \ldots \cup V_{r_{(s^{\ell-1}-1)},s\left(s^{\ell-1}-1\right)},
			\end{align*}
			such that 
			\begin{align*}
				\begin{cases}
					r_0 = 0 & \\
					0\leq  r_j\leq q-2& \mbox{ for any }0\leq j\leq s^{\ell-1}-1,\\
					r_j \equiv r_{j^\prime} \pmod{p^{s(j^\prime-j)}-1} & \mbox{ for any $0\leq j<j^\prime \leq s^{\ell-1}-1$.}
				\end{cases}
			\end{align*}
			
			\item Horizontal types.
			\begin{align*}
				 H_{0,0} \cup H_{t_1,s} \cup \ldots \cup H_{t_j,sj} \cup \ldots \cup H_{t_{\ell-1},s\left(s^{\ell-1}-1\right)},
			\end{align*}
			such that 
			\begin{align*}
				\begin{cases}
					t_0 =0 & \\
					0\leq t_j\leq q-2 &\mbox{ for }\ 0\leq j\leq s^{\ell-1}-1\\
					t_j \equiv t_{j^\prime} \pmod{p^{s(j^\prime-j)}-1}&\mbox{ for any $0\leq j<j^\prime \leq s^{\ell-1}-1$.}
				\end{cases}
			\end{align*}
		\end{enumerate}
		\label{thm4}
	\end{thm}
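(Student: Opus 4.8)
The plan is to mimic the four-step argument (\textbf{Claim~1}--\textbf{Claim~4}) from the proof of Theorem~\ref{thm1}, replacing Lemma~\ref{lem:across} and Lemma~\ref{lem:across-horizontal} by their characteristic-$2$ counterparts Lemma~\ref{lem:across-even} and Lemma~\ref{lem:across-even-horizontal}, and then to exploit the arithmetic fact that, since $k=s^{\ell}$ is a prime power, $\gcd(j'-j,k)=1$ holds precisely when $s\nmid(j'-j)$. As in Section~\ref{sect:cocliques-even} we fix a maximum intersecting set $\mathcal{F}\subseteq\mathcal{H}_q$ containing the identity, put $\mathcal{F}_{i,j}=\mathcal{F}\cap V_{i,j}$ and $\mathcal{F}^{(j)}=\bigcup_i\mathcal{F}_{i,j}$, and speak of $j$-vertical and $j$-horizontal cocliques. \textbf{Step 1} is unchanged from the odd case: by Lemma~\ref{lem:EKR} and Corollary~\ref{cor:one-layer} the subgraph of $\Gamma_q$ induced on layer $j$ has independence number $q(q-1)$, so maximality of $\mathcal{F}$ forces $|\mathcal{F}^{(j)}|=q(q-1)$ for all $j$, and Lemma~\ref{lem:same-layer} then forces each $\mathcal{F}^{(j)}$ to be either $V_{a_j,j}$ for a unique $a_j$ (vertical), or a disjoint union of full cosets $\omega^i\varphi^j\mathcal{E}(y_i)$ with the $y_i$ all determined by $y_0$ (horizontal).

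\textbf{Step 2} is where the prime-power hypothesis enters. For layers $j<j'$: if $s\nmid(j'-j)$ then $\gcd(j'-j,k)=1$, so by Lemma~\ref{lem:across-even} (resp.\ Lemma~\ref{lem:across-even-horizontal}) the subgraph induced by $V_{i,j}\cup V_{i',j'}$ (resp.\ $H_{t,j}\cup H_{t',j'}$) is a coclique for \emph{every} choice of the remaining indices; in particular there is no compatibility constraint at all between these two layers, not even between a vertical one and a horizontal one. If instead $s\mid(j'-j)$ then $\gcd(j'-j,k)\ge s>1$, hence $p^{\gcd(j'-j,k)}\ge4$ and the cross-adjacency count $q^2-q-\tfrac{q-1}{p^{\gcd(j'-j,k)}-1}q$ of Lemma~\ref{lem:across-even}/\ref{lem:across-even-horizontal} is strictly positive; the contradiction argument of \textbf{Claim~2} then runs verbatim, showing that a vertical layer and a horizontal layer inside a fixed residue class modulo $s$ are incompatible. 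Combining the two cases: the layers split into $s$ blocks indexed by the residue of $j$ modulo $s$; inside each block all layers are of the same type; and distinct blocks impose no constraint on one another.

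\textbf{Step 3} analyses one block, whose layers are $r,r+s,\dots,r+s(s^{\ell-1}-1)$. By the reduction preceding Lemma~\ref{lem:across} (namely equation~\eqref{eq:matrix}) the adjacency between $V_{i,j}$ and $V_{i',j'}$ depends only on $j'-j$ and on $(i'-i)$ up to a fixed power-of-$p$ twist; hence left-translating the block by $\varphi^{-r}$ and by a central scalar normalizes its lowest layer to $V_{0,0}$ in the vertical case and to $H_{0,0}$ in the horizontal case, turning the block into a set $F_r$ supported on the layers $0,s,2s,\dots,s(s^{\ell-1}-1)$. Re-running \textbf{Claim~3} and \textbf{Claim~4} inside $F_r$ — the layer differences are now multiples of $s$, so the relevant modulus in Lemma~\ref{lem:across-even}/\ref{lem:across-even-horizontal} is $p^{s(j'-j)}-1$ and the escape clause $\gcd(\cdot,k)=1$ never applies — produces exactly the vertical and horizontal forms of $F_r$ listed in the statement, the pairwise congruences being read off by applying the appropriate lemma to each pair of layers of the block. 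Since the identity lies in $\mathcal{F}_{0,0}$, the block of residue $0$ is already normalized ($i_0=0$, $F_0$ anchored at $V_{0,0}$ or $H_{0,0}$), and reassembling the $s$ blocks gives $\mathcal{F}=F_0\cup\bigcup_{r=1}^{s-1}\omega^{i_r}\varphi^{r}F_r$. For the converse one checks that every $\mathcal{F}$ of this shape is a coclique of $\Gamma_q$ — within a block this follows from Lemma~\ref{lem:same-layer}, Corollary~\ref{cor:one-layer} and the congruences via Lemma~\ref{lem:across-even}/\ref{lem:across-even-horizontal}, between blocks it is automatic — and that $|\mathcal{F}|=k\cdot q(q-1)=|\mathcal{K}_q|$, which simultaneously re-proves that $\gammal{2}{q}$ has the EKR property.

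The step I expect to be the main obstacle is the normalization inside Step~3: one must choose the scalar $\omega^{i_r}$ and the power $\varphi^{r}$ precisely so that, after translating, the coclique conditions for the block of residue $r$ become \emph{literally} the congruences defining $F_r$ in the statement, which requires carefully tracking the twists $(i'-i)p^{k-j}$ appearing in \eqref{eq:matrix} and in the $\omega$-exponents of the $V_{i,j}$; and one must verify that this rewriting introduces no hidden relation between the offsets $i_r$ of different blocks, which is exactly the point at which the ``$\gcd(j'-j,k)=1$ whenever $j\not\equiv j'\pmod s$'' freedom — i.e.\ the prime-power hypothesis on $k$ — is used. Everything else is a transcription of the odd-characteristic proof of Theorem~\ref{thm1}.
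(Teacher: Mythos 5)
Your proposal is correct and follows essentially the same route as the paper: your ``blocks of layers indexed by the residue of $j$ modulo $s$'' are exactly the paper's connected components $\Gamma_q^{(0)},\ldots,\Gamma_q^{(s-1)}$ of $\Gamma_q$ (the paper derives them from the unique maximal subgroup $\langle\varphi^s\rangle$ of $\langle\varphi\rangle$, you from $\gcd(j'-j,k)=1\Leftrightarrow s\nmid(j'-j)$), your within-block analysis is the paper's Claims~9 and~10, and your normalization by $\omega^{i_r}\varphi^{r}$ is the paper's Claim~11.
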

	
	Assume that $q = 2^k$. Since a Singer subgroup of $\gl{2}{q}$ is a regular subgroup of $\gammal{2}{q}$, it is clear the latter has the EKR property, and the largest intersecting sets have size $kq(q-1)$. Let $\Gamma_q$ be the subgraph of the derangement graph of $\Gamma_q$ induced by $\mathcal{H}_q$. 

	If $k$ is not a prime power, then there are two distinct prime numbers $j$ and $j^\prime$ that are not coprime with $k$ and $\langle \varphi^j,\varphi^{j^\prime} \rangle = \langle\varphi \rangle$. The latter implies that $\Gamma_q$ is connected due to Lemma~\ref{lem:across-even}. If $k$ is a prime power, then $\Gamma_q$ is disconnected since there is a unique maximal subgroup in $\langle \varphi\rangle$ which contains all the non-generators of $\langle \varphi\rangle$. Therefore, we will distinguish these two cases. If $k$ is not a prime power, then the proof of Theorem~\ref{thm3} is given in Section~\ref{subsect1}. In the case where $k$ is a prime power, the proof of Theorem~\ref{thm4} is given in Section~\ref{subsect2}. 
	
	Before proceeding with these cases, we note that the following claims also hold in even characteristic (the proofs are identical to the ones in odd characteristic so we omit them).
	
	\vspace*{0.5cm}
	\noindent{\bf Claim~5.} {\it For any $0\leq j\leq k-1$, we have $|\mathcal{F}^{(j)}|=q(q-1)$.}
	
	\vspace*{0.5cm}
	\noindent {\bf Claim~6.} {\it If $\Gamma$ is a component of $\Gamma_q$, then $\mathcal{F}\cap V(\Gamma)$ cannot contain a $j$-vertical coclique and a $j^\prime$-horizontal coclique, for $0\leq j,j^\prime\leq k-1$.}

	\subsection{$k$ is not a prime power}\label{subsect1}
	As we saw before, in this case $\Gamma_q$ is connected. The proof of Theorem~\ref{thm3} follows from the next claims.
	
	\vspace*{0.5cm}
	\noindent {\bf Claim~7.} {\it  If $\mathcal{F}^{(0)}$ is a horizontal coclique, then 
		$
		\mathcal{F} = H_{0,0} \cup H_{t_1,1} \cup \ldots \cup H_{t_j,j} \cup \ldots \cup H_{t_{k-1},k-1},
		$
		such that 
		\begin{align*}
			\begin{cases}
				t_0 = 0 &\\
				0\leq t_j\leq q-2 &\mbox{ for }\ 0\leq j\leq k-1\\
				\gcd(j^\prime-j,k) = 1 \mbox{ or } t_j \equiv t_{j^\prime} \pmod{p^{j^\prime-j}-1}&\mbox{ for any $0\leq j<j^\prime \leq k-1$.}
			\end{cases}
	\end{align*}}
	\begin{proof}[Proof of Claim~7]
		Since $\mathcal{F}^{(0)}$ is a horizontal coclique and $\mathcal{F}_{0,0} \neq \varnothing$, by Lemma~\ref{lem:same-layer}, we deduce that$|\mathcal{F}_{0,0}| = q$. Moreover, we must have $\mathcal{F}^{(0)} = H_{0,0}$ and so $\mathcal{F}_{i,0} = \omega^{i}\mathcal{E}(\omega^{-i})$ for $0\leq i\leq k-1$. By Claim~6, we know that $\mathcal{F}^{(j)}$ is a horizontal coclique for all $0\leq j\leq k-1$. Hence, we must have 
		\begin{align*}
			\mathcal{F} = H_{0,0} \cup H_{t_1,1} \cup \ldots \cup H_{t_{k-1},k-1}
		\end{align*}
		for some $0\leq t_1,t_2,\ldots,t_{k-1} \leq q-2$. The result follows immediately from Lemma~\ref{lem:across-even-horizontal}.
	\end{proof}
	\vspace*{0.5cm}
	\noindent {\bf Claim~8.} {\it  If $\mathcal{F}^{(0)}$ is a vertical coclique, then 
		$
			\mathcal{F} = V_{0,0} \cup V_{r_1,1} \cup \ldots \cup V_{r_j,j} \cup \ldots \cup V_{r_{k-1},k-1},
		$
		such that 
		\begin{align*}
			\begin{cases}
				r_0 = 0& \\
				0\leq  r_j\leq q-2& \mbox{ for any }1\leq j\leq k-1,\\
				\gcd(j^\prime-j,k) = 1 \mbox{ or }
				r_j \equiv r_{j^\prime} \pmod{p^{j^\prime-j}-1} & \mbox{ for any $0\leq j<j^\prime \leq k-1$.}
			\end{cases}
		\end{align*}}
	\begin{proof}[Proof of Claim~8]
		Since $\mathcal{F}^{(0)}$ is a vertical coclique, we know from Claim~2 that $\mathcal{F}^{(j)}$ are vertical cocliques for all $0\leq j\leq k-1$. Moreover, there exists $0\leq r_1,r_2,\ldots,r_{k-1}\leq k-1$ such that $\mathcal{F}_{i,j} = V_{r_j,j}$ and thus
		\begin{align*}
			\mathcal{F} = V_{0,0} \cup V_{i_1,1}\cup \ldots V_{i_{k-1},k-1}. 
		\end{align*}
		By Lemma~\ref{lem:across-even}, the result also follows immediately.
	\end{proof}
	
	 This completes the proof of Theorem~\ref{thm3}.
	\subsection{$k$ is a prime power}\label{subsect2}
	Assume that $k = s^\ell$, where $s$ is a prime and $\ell \geq 2$ is an integer. In this case, we know that $\Gamma_q$ is disconnected. Any non-generator of $\langle \varphi \rangle$ is contained in the unique maximal subgroup $\langle \varphi^{s}\rangle$. From this, we deduce that there are exactly $s$ components in $\Gamma_q$. As the set $\{id,\varphi,\varphi^2,\ldots,\varphi^{{s-1}}\}$ is a left transversal\footnote{that is, a system of distinct representatives of the cosets of $\langle \varphi^s\rangle$ in $\langle \varphi\rangle$} of $\langle \varphi^s\rangle$ in $\langle \varphi\rangle$, we may assume that $\Gamma_q^{(0)},\Gamma_q^{(1)},\Gamma_q^{(2)},\ldots,\Gamma_q^{(s-1)}$ are the components of $\Gamma_q$ which respectively correspond to the elements $id,\varphi,\varphi^2,\ldots,\varphi^{(s-1)}$. In fact, $\Gamma_q^{(t)}$, for any $1\leq t\leq s-1$, is the subgraph induced by all $V_{i,j}$ such that $\gcd(j-t,k) \neq 1$. From this, we know that $\Gamma_q^{(0)}$ is the subgraph that contains the identity element of the group. If $V_{i,j}$ and $V_{i^\prime,j^\prime}$ are contained in $\Gamma_q^{(t)}$, then $s\mid \gcd(j-t,k)$ and $s\mid \gcd(j^\prime-t,k)$, and thus we must have $\gcd(j-j^\prime,k)\neq 1$.
	
	\vspace*{0.5cm}
	\noindent {\bf Claim~9.} {\it  If $\mathcal{F}^{(0)} \subset V\left(\Gamma_q^{(0)}\right)$ is a horizontal coclique, then 
		\begin{align*}
			\mathcal{F}\cap V\left(\Gamma_q^{(0)}\right) = H_{0,0} \cup H_{t_1,s} \cup \ldots \cup H_{t_j,js} \cup \ldots \cup H_{t_{s(s^{\ell-1}-1)},s\left(s^{\ell-1}-1\right)},
		\end{align*}
		such that 
		\begin{align*}
			\begin{cases}
				t_0 = 0 &\\
				0\leq t_j\leq q-2 &\mbox{ for }\ 0\leq j\leq \left(s^{\ell-1}-1\right)\\
				t_j \equiv t_{j^\prime} \pmod{p^{s(j^\prime-j)}-1}&\mbox{ for any $0\leq j<j^\prime \leq \left(s^{\ell-1}-1\right)$.}
			\end{cases}
	\end{align*}}
	\begin{proof}[Proof of Claim~9]
		Since $\mathcal{F}^{(0)}$ is a horizontal coclique and $\mathcal{F}_{0,0} \neq \varnothing$, by Lemma~\ref{lem:same-layer}, we deduce that$|\mathcal{F}_{0,0}| = q$. Moreover, we must have $\mathcal{F}^{(0)} = H_{0,0}$ and $\mathcal{F}_{i,0} = \omega^{i}\mathcal{E}(\omega^{-i})$ for $0\leq i\leq k-1$. By Claim~6, we know that $\mathcal{F}^{(j)}$ is a horizontal coclique for all $0\leq j\leq (s^{\ell-1}-1)$. As the number of layers of $\Gamma_q^{(0)}$ is $\frac{s^\ell}{s} = s^{\ell-1}$, we must have 
		\begin{align*}
			\mathcal{F} = H_{0,0} \cup H_{t_1,s}\cup H_{t_2,2s} \cup \ldots \cup H_{t_{s^{\ell-1}-1},s\left(s^{\ell-1}-1\right)}
		\end{align*}
		for some $0\leq t_1,t_2,\ldots,t_{s^{\ell-1}-1} \leq q-2$. The result follows immediately from Lemma~\ref{lem:across-even-horizontal} and the fact that $\gcd(sj^\prime-sj,k)\neq 1$ for any $0\leq j<j^\prime\leq s^{\ell -1}-1$.
	\end{proof}
	\vspace*{0.5cm}
	\noindent {\bf Claim~10.} {\it  If $\mathcal{F}^{(0)} \subset V\left(\Gamma_q^{(0)}\right)$ is a vertical coclique, then 
		\begin{align*}
			\mathcal{F}\cap V \left(\Gamma_q^{(0)}\right) = V_{0,0} \cup V_{r_1,s} \cup \ldots \cup V_{r_j,sj} \cup \ldots \cup V_{r_{s^{\ell-1}-1},s(s^{\ell-1}-1)},
		\end{align*}
		such that 
		\begin{align*}
			\begin{cases}
				r_0 = 0 &\\
				0\leq  r_j\leq q-2& \mbox{ for any }0\leq j\leq (s^{\ell-1}-1),\\
				r_j \equiv r_{j^\prime} \pmod{p^{s(j^\prime-j)}-1} & \mbox{ for any $0\leq j<j^\prime \leq (s^{\ell-1}-1)$.}
			\end{cases}
	\end{align*}}
	\begin{proof}[Proof of Claim~10]
		Since $\mathcal{F}^{(0)}$ is a vertical coclique, we know from Claim~6 that $\mathcal{F}^{(j)}$ are vertical cocliques for all $0\leq j\leq (s^{\ell-1}-1)$. Moreover, there exists $0\leq r_1,r_2,\ldots,r_{(s^{\ell-1}-1)}\leq q-2$ such that $\mathcal{F}_{i,j} = V_{r_j,sj}$ and thus
		\begin{align*}
			\mathcal{F}\cap V\left(\Gamma_q^{(0)}\right) = V_{0,0} \cup V_{r_1,s}\cup \ldots V_{r_{(s^{\ell-1}-1)},s\left(s^{\ell-1}-1\right)}. 
		\end{align*}
		By Lemma~\ref{lem:across-even}, these indices must satisfy
		\begin{align*}
				r_j \equiv r_{j^\prime} \pmod{p^{s(j^\prime-j)}-1} & \mbox{ for any $0\leq j,j^\prime \leq (s^{\ell-1}-1)$.}
		\end{align*}
		The result then follows immediately.
	\end{proof}
	\vspace*{0.5cm}
	\noindent {\bf Claim~11.} {\it  If $0\leq j\leq s-1$, then 
		$
			\mathcal{F}\cap V \left(\Gamma_q^{(j)}\right) = \omega^i\varphi^j\mathcal{F}^\prime,
		$
		for some coclique $\mathcal{F}^{\prime}$ of maximum size in $\Gamma_q^{(0)}$ and $0\leq i\leq q-2$.}
	\begin{proof}[Proof of Claim~11]
		Since $\Gamma_q^{(0)}$ is isomorphic to $\Gamma_q^{(j)}$, a maximum coclique of $\Gamma_q^{(j)}$ containing $\varphi^{j}$ can be obtained by multiplying a maximum coclique of $\Gamma_q^{(0)}$ by $\varphi^j$, since such cocliques of $\Gamma_q^{(0)}$ are assumed to contain the identity element. A maximum coclique in $\Gamma_q^{(j)}$ need not contain $\varphi^j$, however, it must contain an element of the form $\omega^i\varphi^j$ for some $0\leq i\leq q-2$. Therefore, there exists a maximum coclique $\mathcal{F}^\prime$ of $\Gamma_q^{(0)}$ such that $\mathcal{F} = \omega^{i}\varphi^j\mathcal{F}^\prime$.
	\end{proof}
	
	Combining these three claims, we obtain the proof of Theorem~\ref{thm3}.
	
	\section{No Hilton-Milner sets in $\gl{2}{q}$}\label{sect:HM}
	
	In this section, we revisit the fact that all maximal intersecting sets of $\gl{2}{q}$ acting on non-zero vectors of $\mathbb{F}_q^2$ are maximum. This result was proved in \cite{ahanjideh2022largest} and \cite{maleki2021no}. We believe that our proof is much simpler than the proofs in these papers, so we reprove this result here.
	
	\begin{thm}
		If $\mathcal{F} \subset \gl{2}{q}$ is a maximal intersecting set, then it is a maximum intersecting set. In particular, there is no Hilton-Milner sets in $\gl{2}{q}$.\label{thm:HM-GL}
	\end{thm}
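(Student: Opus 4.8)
The plan is to combine two short $2\times 2$ computations with the lexicographic‑product description of the derangement graph already isolated in Section~\ref{sect:nice-graphs}. Translating, we may assume $I\in\mathcal{F}$; then every $g\in\mathcal{F}$ has $1$ as an eigenvalue, and for $g\neq I$ the set $L(g)\subseteq\pg{1}{q}$ of $g$‑invariant lines has size $1$ (if $g$ is a transvection) or $2$ (if $g$ is diagonalisable with distinct eigenvalues); a non‑identity scalar is a derangement and cannot occur. Recall that $g$ and $h$ agree on a nonzero vector if and only if $g^{-1}h$ has the eigenvalue $1$.

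First I would prove a rigidity statement: \emph{if $g,h\in\gl{2}{q}$ both have eigenvalue $1$ and agree on a nonzero vector, then $L(g)\cap L(h)\neq\varnothing$.} If they agree on $v$ with $\langle gv\rangle=\langle v\rangle$, then $\langle v\rangle\in L(g)\cap L(h)$. Otherwise, conjugating so that $g=\begin{bmatrix}1&a\\0&1\end{bmatrix}$ with $a\neq 0$ (the transvection case, $L(g)=\{\langle e_1\rangle\}$) or $g=\operatorname{diag}(1,\lambda)$ with $\lambda\neq 0,1$ (so $L(g)=\{\langle e_1\rangle,\langle e_2\rangle\}$), and writing $h=(h_{ij})$, one gets from $\det(h-I)=0$ that $\det(g^{-1}h-I)$ equals $a\,h_{21}$, respectively $(h_{11}-1)(\lambda^{-1}-1)$; in either case this vanishes exactly when some line of $L(g)$ is $h$‑invariant. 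Hence if $L(g)\cap L(h)=\varnothing$ then $g^{-1}h$ has no eigenvalue $1$, contradicting that $g,h$ agree on a vector. The same computation records, when $g,h$ are diagonalisable and share exactly one invariant line, that agreeing on a vector amounts to a single equality between a prescribed pair of eigenvalues of $g$ and $h$.

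Second I would rule out the only configuration obstructing a Helly‑type conclusion: \emph{there is no ``triangle'', i.e.\ no three distinct lines $\ell_1,\ell_2,\ell_3$ and diagonalisable $g_1,g_2,g_3\in\gl{2}{q}\setminus\{I\}$, each with eigenvalue $1$, with $L(g_1)=\{\ell_1,\ell_2\}$, $L(g_2)=\{\ell_2,\ell_3\}$, $L(g_3)=\{\ell_3,\ell_1\}$, pairwise agreeing on a nonzero vector.} Since $\gl{2}{q}$ is $3$‑transitive on $\pg{1}{q}$ we may take $\ell_1=\langle e_1\rangle$, $\ell_2=\langle e_2\rangle$, $\ell_3=\langle e_1+e_2\rangle$; parametrising by the eigenvalues gives $g_1=\operatorname{diag}(\alpha,\beta)$, $g_2=\begin{bmatrix}\delta&0\\\delta-\gamma&\gamma\end{bmatrix}$, $g_3=\begin{bmatrix}\mu&\nu-\mu\\0&\nu\end{bmatrix}$, and a direct computation of $g_1^{-1}g_2$, $g_1^{-1}g_3$, $g_2^{-1}g_3$ turns the three pairwise‑agreement conditions into ``$\alpha=\delta$ or $\beta=\gamma$'', ``$\alpha=\mu$ or $\beta=\nu$'', ``$\gamma=\mu$ or $\delta=\nu$''. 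Together with $\alpha\neq\beta$, $\gamma\neq\delta$, $\mu\neq\nu$ and the requirement that $1$ lie in each of $\{\alpha,\beta\},\{\gamma,\delta\},\{\mu,\nu\}$, a short case check (according to where $1$ sits in each $g_i$) shows this system is inconsistent. I expect this step --- the non‑existence of a triangle --- to be the main point; the rest is bookkeeping or quotation of earlier results.

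Finally I would assemble the proof. Being maximal, $\mathcal{F}\neq\{I\}$, so $\mathcal{F}\setminus\{I\}\neq\varnothing$; by the rigidity statement $\{L(g):g\in\mathcal{F}\setminus\{I\}\}$ is a family of sets of size $\le 2$ that pairwise intersect, hence either has a line lying in all its members or is a triangle of three $2$‑sets, the latter excluded by the previous step. (Concretely: fix $g_1\in\mathcal{F}\setminus\{I\}$ and $\ell\in L(g_1)$; if all of $\mathcal{F}$ fixes $\ell$, done; otherwise the rigidity statement applied to $g_1$ and some element missing $\ell$ forces $L(g_1)=\{\ell,\ell'\}$, and then every element of $\mathcal{F}$ fixes $\ell$ or $\ell'$; if some $g_2\in\mathcal{F}$ misses $\ell$ and some $g_3\in\mathcal{F}$ misses $\ell'$, they share a third invariant line, so $g_1,g_2,g_3$ form a triangle, a contradiction; hence $\mathcal{F}$ fixes $\ell$, or fixes $\ell'$.) Thus $\mathcal{F}$ lies in the stabiliser of a line, and after conjugating, $\mathcal{F}$ lies in the subgroup $\mathcal{H}$ of invertible upper‑triangular matrices, i.e.\ the stabiliser of $\langle e_1\rangle$. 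By Lemma~\ref{lem:same-layer} and Corollary~\ref{cor:one-layer} applied to the layer $j=0$ (which consists of matrices of $\gl{2}{q}$, and on which the induced derangement subgraph does not involve the Frobenius part), the subgraph of the derangement graph of $\gl{2}{q}$ induced by $\mathcal{H}$ is isomorphic to $\widetilde{K}_{q-1}^{q-1}\!\left[\overline{K_q}\right]$. Every coclique of that graph lies in one of its maximum cocliques: its image in $\widetilde{K}_{q-1}^{q-1}$ is a coclique, hence contained in a single part or a single removed clique, and each of these lifts to a block of size $(q-1)q=\frac{|\gl{2}{q}|}{q^2-1}$, namely the stabiliser of a nonzero vector of $\langle e_1\rangle$, respectively of an affine hyperplane with direction $\langle e_1\rangle$. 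Therefore the maximal set $\mathcal{F}$ is contained in, hence equals, such a block, so it is a maximum intersecting set; in particular $\gl{2}{q}$ has no Hilton--Milner set. The case $q=2$, where $\mathcal{H}$ has only two elements, is trivial.
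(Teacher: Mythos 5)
Your proposal is correct and follows the same two-step strategy as the paper: first show by elementary $2\times 2$ eigenvalue computations that an intersecting set containing $I$ must stabilise a line of $\pg{1}{q}$, then use Corollary~\ref{cor:one-layer} (the layer $j=0$ of the lexicographic-product structure) to conclude that any coclique containing $I$ in the upper-triangular subgroup lies in the stabiliser of $e_1$ or of $e_2+\langle e_1\rangle$. The only differences are organisational --- your pairwise rigidity lemma plus the triangle exclusion and Helly argument replace the paper's Lemma~\ref{lem:base} (a triple of elements fixes exactly one line) and the case analysis of Lemma~\ref{lem:fix-line} --- and one harmless overstatement: in the diagonalisable case $\det(g^{-1}h-I)=(h_{11}-1)(\lambda^{-1}-1)$ vanishes exactly when $h_{11}=1$, which implies but is not equivalent to some line of $L(g)$ being $h$-invariant; only the implication you actually use (vanishing forces a shared invariant line) is true.
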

	
	In order to prove this result, we need the following lemmas.
	\begin{lem}
		If $\mathcal{F} \subset \gl{2}{q}$ is intersecting, then any three distinct elements of $\mathcal{F}$ fix exactly one element of $\pg{1}{q}$.\label{lem:base}
	\end{lem}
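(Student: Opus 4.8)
The plan is to reduce the statement to elementary linear algebra over $\mathbb{F}_q$. Since intersecting sets are taken to be basic, every $g\in\mathcal{F}$ agrees with the identity $I$ on a non-zero vector, so $1$ is an eigenvalue of $g$; hence $g$ fixes at least one point of $\pg{1}{q}$, and if $g\ne I$ its characteristic polynomial is $(t-1)(t-\lambda)$, so $g$ fixes exactly one point of $\pg{1}{q}$ when $\lambda=1$ (a transvection-type element) and exactly two when $\lambda\ne 1$ (a diagonalisable element), $I$ being the only element fixing more. The lemma then follows from two facts: (a) any two distinct elements of $\mathcal{F}$ share a point of $\pg{1}{q}$, and (b) three distinct elements of $\mathcal{F}$ cannot have fixed-point sets forming the three sides of a ``triangle'' in $\pg{1}{q}$.

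For (a): two distinct $g,h\in\mathcal{F}$ agree on a non-zero vector exactly when $g-h$ is singular. Pick a basis whose first vector spans the $1$-eigenspace of $g$ and whose second spans the $1$-eigenspace of $h$; if these coincide we are done, and otherwise $g=\bigl[\begin{smallmatrix}1&b\\0&d\end{smallmatrix}\bigr]$ and $h=\bigl[\begin{smallmatrix}a&0\\c&1\end{smallmatrix}\bigr]$ in this basis, so $\det(g-h)=(1-a)(d-1)+bc$. When $a,d\ne 1$ and $b,c\ne 0$, vanishing of this determinant is exactly the condition that the second eigenvector $\langle(b,d-1)\rangle$ of $g$ and the second eigenvector $\langle(a-1,c)\rangle$ of $h$ span the same point of $\pg{1}{q}$; in the degenerate cases $b=0$, $c=0$, $a=1$ or $d=1$, one of $g,h$ is diagonal or a non-trivial transvection, and one checks directly that it already fixes the other's $1$-eigenspace. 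Either way $g$ and $h$ have a common fixed point.

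For (b): take distinct $A,B,C\in\mathcal{F}$. If one of them is $I$ or fixes only one point, then by (a) that point lies in $\operatorname{Fix}(A)\cap\operatorname{Fix}(B)\cap\operatorname{Fix}(C)$ and we are done; so assume each of $A,B,C$ fixes exactly two points. By (a) the three two-element sets $\operatorname{Fix}(A),\operatorname{Fix}(B),\operatorname{Fix}(C)$ meet pairwise, so either two of them coincide (again giving a common point) or, after relabelling, $\operatorname{Fix}(A)=\{P_1,P_2\}$, $\operatorname{Fix}(B)=\{P_1,P_3\}$, $\operatorname{Fix}(C)=\{P_2,P_3\}$ with $P_1,P_2,P_3$ distinct. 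I would eliminate this case by choosing coordinates with $P_1=\langle e_1\rangle$, $P_2=\langle e_2\rangle$, $P_3=\langle e_1+e_2\rangle$: then $A$ is diagonal and $B,C$ are upper- and lower-triangular with the prescribed eigenvectors, and writing each of $A,B,C$ via its two eigenvalues, imposing that $1$ be an eigenvalue of each, and requiring $A-B$, $A-C$, $B-C$ all singular leads to an inconsistent system of conditions on the eigenvalues. Hence $A,B,C$ have a common point $P\in\pg{1}{q}$.

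The remaining point — that $P$ is the unique common fixed point — is the one I expect to be the main obstacle: a second common fixed point $P'$ would make $A,B,C$ simultaneously diagonal in a basis adapted to $\{P,P'\}$ with $1$ among the eigenvalues of each, and one has to analyse, via the pairwise singularity conditions, exactly when this can occur. This uniqueness analysis, together with the eigenvalue bookkeeping in part (b), is where the real work of the proof lies.
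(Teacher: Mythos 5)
Your part (a) is correct, and it is in substance the computation the paper runs after normalising $C=I_2$; in fact your version is the more reliable one, since the paper's displayed matrix for $A^{-1}B-I_2$ drops the $(a-1)$ term from its top-left entry, and the correct determinant is $y^{-1}\left((a-1)(1-y)+bx\right)$ rather than $-bxy^{-1}$ --- so the right conclusion is not ``$x=0$ or $b=0$'' but precisely your condition that the second eigenvectors $\langle (x,y-1)\rangle$ and $\langle (a-1,b)\rangle$ coincide. Your triangle elimination in (b) also closes: with $P_1=\langle e_1\rangle$, $P_2=\langle e_2\rangle$, $P_3=\langle e_1+e_2\rangle$ and eigenvalue pairs $(a_1,a_2)$, $(b_1,b_2)$, $(c_1,c_2)$, the three singularity conditions reduce to $(a_1-b_1)(a_2-b_2)=(a_1-c_2)(a_2-c_1)=(b_1-c_1)(b_2-c_2)=0$, and a short case check using $1\in\{a_1,a_2\}\cap\{b_1,b_2\}\cap\{c_1,c_2\}$ together with $a_1\neq a_2$, $b_1\neq b_2$, $c_1\neq c_2$ shows these are incompatible. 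So you do have a complete proof that any three elements of a basic intersecting set of $\gl{2}{q}$ have \emph{at least} one common fixed point in $\pg{1}{q}$.

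The step you flagged as the main obstacle --- uniqueness --- is a genuine gap, and it cannot be filled, because the ``exactly one'' claim is false as stated. Take $\mathcal{F}=\{I_2,\operatorname{diag}(1,a),\operatorname{diag}(1,b)\}$ with $a,b\in\mathbb{F}_q^*\setminus\{1\}$ distinct (any $q\geq 4$): any two of these agree on $e_1$, so $\mathcal{F}$ is intersecting, yet all three elements stabilise both $\langle e_1\rangle$ and $\langle e_2\rangle$. This is exactly the simultaneously diagonalisable configuration you isolated. The paper's proof does not see it because it sends a fixed vector $u$ of $A$ to $e_1$ and a fixed vector $v$ of $B$ to $e_2$ by a single $R\in\gl{2}{q}$, which tacitly assumes $u$ and $v$ are linearly independent. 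Only the existence half of the lemma is actually needed later (Lemma~\ref{lem:fix-line} requires a line fixed by all of $\mathcal{F}$, though its appeal to ``the uniqueness part'' would likewise need rewording), and that half is what your (a) and (b) establish; as a proof of the statement as literally written, however, your attempt is necessarily incomplete, and you located the obstruction in exactly the right place.
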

	\begin{proof}
		Let $A,B,C \in \mathcal{F}$ be distinct. Note that by multiplying with the right element, we may assume that $C = I_2$, and therefore $A$ and $B$ must have a fixed point. Let $u$ and $v$ be the respective elements of $\mathbb{F}_q^2$ fixed by $A$ and $B$. As there is a unique matrix $R\in \gl{2}{q}$ such that $Ru = e_1$ and $Rv= e_2$, by conjugating with $R$, we may also assume that $A$ fixes $e_1$ and $B$ fixes $e_2$. In this case, we have
		\begin{align*}
			A = \begin{bmatrix}
				1 & x\\
				0 & y
			\end{bmatrix} \mbox{ and }
			B = \begin{bmatrix}
				a & 0\\
				b& 1
			\end{bmatrix},
		\end{align*}
		for some $x,b\in \mathbb{F}_q$ and $y,a\in \mathbb{F}_q^*$. Note that $A$ fixes the line $\langle e_1\rangle$, and additionally the line
		\begin{align*}
			 \left\langle \begin{bmatrix}
				\frac{x}{y-1}\\
				1
			\end{bmatrix} \right\rangle
		\end{align*}
		 if $y\neq 1$.
		Similarly, $B$ fixes the line $\langle e_2\rangle$, and additionally the line
		\begin{align*}
			 \left\langle \begin{bmatrix}
				1\\
				\frac{b}{a-1}
			\end{bmatrix} \right\rangle
		\end{align*}
		if $a\neq 1$.
		
		Since $A^{-1}B$ must also fix a point, the matrix
		\begin{align*}
			A^{-1}B - I_2 = \begin{bmatrix}
				bxy^{-1} & -xy^{-1}\\
				by^{-1} & y^{-1}-1
			\end{bmatrix}
		\end{align*}
		has rank $1$. Therefore,
		\begin{align*}
			\det(A^{-1}B-I_2) = -bxy^{-1} = 0.
		\end{align*}
		Since $y^{-1} \neq 0$, we conclude that $x=0$ or $b = 0$, and so $A$ fixes $\langle e_2\rangle$ or $B$ fixes $\langle e_1\rangle$. This takes care of the existence of such elements in $\pg{1}{q}$.
		
		If $A$ fixes $\langle e_2\rangle$ and $B$ fixes $\langle e_1\rangle$, then $b=0$ and $x = 0$. Therefore, $A$ and $B$ are diagonal matrices. Since they are also intersecting it is not hard to see that one of them must be the identity, which is impossible.
		
		Therefore, either $A$ fixes $\langle e_2\rangle$ or $B$ fixes $\langle e_1\rangle$.
	\end{proof}
	\begin{lem}
		If $\mathcal{F} \subset \gl{2}{q}$ is intersecting, then there exists an element of $\pg{1}{q}$ that is fixed by every element of $\mathcal{F}$.\label{lem:fix-line}
	\end{lem}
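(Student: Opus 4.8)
The plan is to reduce at once to the case $I_2 \in \mathcal{F}$ (a basic intersecting set), and then to induct on structure via Lemma~\ref{lem:base}. When $I_2\in\mathcal{F}$, every element of $\mathcal{F}$ fixes some element of $\pg{1}{q}$, since it equals $I_2^{-1}g$ and hence is not a derangement; moreover no element of $\mathcal{F}\setminus\{I_2\}$ can be a scalar matrix $\lambda I_2$ with $\lambda\neq 1$, because then $\lambda I_2 - I_2$ would be invertible and $\lambda I_2$ would be a derangement. So every element of $\mathcal{F}\setminus\{I_2\}$ is non-scalar, and a non-scalar element of $\gl{2}{q}$ has at most two invariant lines (at most two distinct eigenvalues). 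The cases $|\mathcal{F}|\leq 2$ are immediate, since $I_2$ fixes every line of $\pg{1}{q}$ and the one non-identity element, if present, fixes at least one.

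The substantive case is $|\mathcal{F}|\geq 3$. First I would pick two distinct non-identity elements $A,B\in\mathcal{F}$ and apply Lemma~\ref{lem:base} to the triple $\{I_2,A,B\}$, producing a line $P\in\pg{1}{q}$ that is fixed by $A$ and by $B$ (and, trivially, by $I_2$). The goal is then to show $P$ is fixed by every $D\in\mathcal{F}$. If $D\in\{I_2,A,B\}$ this is automatic, so assume $D\in\mathcal{F}\setminus\{I_2,A,B\}$ and argue by contradiction, supposing $D$ does not fix $P$.

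Under that assumption, Lemma~\ref{lem:base} applied to $\{I_2,A,D\}$ and to $\{I_2,B,D\}$ yields lines $P_1$ and $P_2$ respectively, each fixed by $D$ and hence distinct from $P$. Thus $A$ fixes the two distinct lines $P$ and $P_1$, and $B$ fixes the two distinct lines $P$ and $P_2$; by the eigenline count these are \emph{all} the invariant lines of $A$, resp.\ of $B$. Now apply Lemma~\ref{lem:base} to $\{A,B,D\}$ and let $P_3$ be their unique common fixed line: since $A$ fixes $P_3$ we have $P_3\in\{P,P_1\}$, since $B$ fixes $P_3$ we have $P_3\in\{P,P_2\}$, and since $D$ fixes $P_3$ while $D$ does not fix $P$ we get $P_3\neq P$, forcing $P_1=P_3=P_2$. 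But then $A$ and $B$ both fix this common line $P_1\neq P$, so $I_2,A,B$ all fix $P_1$ as well, contradicting the uniqueness assertion of Lemma~\ref{lem:base} for $\{I_2,A,B\}$. Hence $D$ fixes $P$, and $P$ is a common fixed point of all of $\mathcal{F}$.

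The only delicate point is the bookkeeping: one must check that each triple fed to Lemma~\ref{lem:base} genuinely consists of three distinct matrices, and one must keep firmly in mind the pinning observation that a non-scalar matrix in $\gl{2}{q}$ with two distinct invariant lines has no third one. Everything else in the argument is formal.
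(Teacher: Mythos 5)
Your proof is correct, and it follows the same overall plan as the paper: both arguments bootstrap from Lemma~\ref{lem:base} by fixing the common invariant line $P$ of one triple $\{I_2,A,B\}$ and then showing every further element of $\mathcal{F}$ must fix $P$. The execution of that last step is where you genuinely diverge. The paper invokes \emph{the proof} of Lemma~\ref{lem:base}, not just its statement: it normalizes coordinates so that $B$ has exactly the two invariant lines $\langle e_1\rangle$ and $\langle e_2\rangle$ while $A$ fixes $\langle e_1\rangle$ but not $\langle e_2\rangle$, and then reads off directly that the common line of $\{I_2,B,C\}$ and of $\{A,B,C\}$ must be $\langle e_1\rangle$. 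You instead treat Lemma~\ref{lem:base} as a black box (existence \emph{and} uniqueness of the common line) and run a contradiction through the three auxiliary triples $\{I_2,A,D\}$, $\{I_2,B,D\}$, $\{A,B,D\}$, using only the observation that a non-scalar element of $\gl{2}{q}$ has at most two invariant lines; the forced coincidence $P_1=P_3=P_2$ then gives $\{I_2,A,B\}$ two common invariant lines, violating uniqueness. This is a clean, coordinate-free alternative that makes explicit the eigenline count the paper uses only implicitly, at the cost of needing one extra application of the lemma.

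One caveat deserves correction. Your opening claim that one can ``reduce at once to the case $I_2\in\mathcal{F}$'' is not a genuine reduction: the conclusion is not invariant under left translation. Indeed, if $H$ is the stabilizer of $e_1$ in $\gl{2}{q}$ and $g\langle e_1\rangle\neq\langle e_1\rangle$, then $gH$ is intersecting (since $(gh_1)^{-1}(gh_2)\in H$ fixes $e_1$) but its elements fix no common element of $\pg{1}{q}$, because the only line invariant under all of $H$ is $\langle e_1\rangle$ and $g$ moves it. So the lemma is only true under the paper's standing convention that intersecting sets are basic; your proof (like the paper's) is fine once that convention is \emph{invoked} rather than ``derived'' by translation.
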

	\begin{proof}
		Let $\mathcal{F}\subset \gl{2}{q}$ be intersecting.
		
		If $|\mathcal{F}| = 2$, then the statement trivially holds. If $|\mathcal{F}| = 3$, then the statement holds by Lemma~\ref{lem:base}. Assume that $|\mathcal{F}|\geq 4$. Let $A,B \in \mathcal{F}$. By Lemma~\ref{lem:base} and its proof, suppose that $\langle e_1\rangle$ is fixed by $\{I_2,A,B\}$ and that $B$ also fixes $\langle e_2\rangle$. From the uniqueness part of Lemma~\ref{lem:base}, we know that $A$ does not fix $\langle e_2\rangle$. 
		
		Now let $C\in \mathcal{F} \setminus \{ I_2,A,B\}$. Applying Lemma~\ref{lem:base} on $\{I_2,B,C\}$, we know that $C$ fixes one of $\langle e_1\rangle$ or $\langle e_2\rangle$.  Note that $\{A,B,C\}$ fixes $\langle w\rangle \in \pg{1}{q}$ which must be an eigenspace of $A$, $B$ and $C$, so $\langle \omega\rangle \in \left\{ \langle e_1\rangle, \langle e_2\rangle \right\}$.
		The case $\langle w\rangle = \langle e_2\rangle$ is impossible since $\langle e_2 \rangle$ is not fixed by $A$.  If $\langle w\rangle = \langle e_1\rangle$, then $C$ fixes $\langle e_1\rangle$, and thus $\{I_2,A,B,C\}$ fixes $\langle e_1\rangle$.  
		Since $C$ is arbitrary, we conclude that $\mathcal{F}$ fixes $\langle e_1\rangle$ and this completes the proof.
	\end{proof}
	Now, we give a proof to Theorem~\ref{thm:HM-GL}.
	\begin{proof}[Proof of Theorem~\ref{thm:HM-GL}]
		Let $\mathcal{F}\subset \gl{2}{q}$ be an intersecting set. By Lemma~\ref{lem:fix-line}, we know that $\mathcal{F}$ is contained in the stabilizer of the line $\langle e_1\rangle$. The stabilizer of $\langle e_1\rangle$ in $\gl{2}{q}$ is the subgroup
		\begin{align*}
			V_{0,0} \cup V_{1,0}\cup \ldots \cup V_{q-2,0}.
		\end{align*}
		Using the fact that the identity element is in $\mathcal{F}$, we know that by Corollary~\ref{cor:one-layer}, the cocliques of the subgraph of $\Gamma_q$ induced by $V_{0,0} \cup V_{1,0}\cup \ldots \cup V_{q-2,0}$ which contains the identity element must be contained in $V_{0,0}$, which is the stabilizer of $e_1$ in $\gl{1}{q}$, or
		\begin{align*}
			\bigcup_{i = 0}^{q-2} \omega^i \mathcal{E}(\omega^{-i})
		\end{align*}
		which is the stabilizer of the line $e_2 +\langle e_1\rangle$. This completes the proof.
	\end{proof}
	
	\section{Future work and concluding remark}
	
	In this paper, we showed that whenever $q$ is an odd prime power, then the largest intersecting sets of $\gammal{2}{q}$ in its action on non-zero vectors of $\mathbb{F}_q^2$ were characterized in Theorem~\ref{thm1}, and we showed that they need not be subgroups, unless when $q = p^2$. When $q$ is a power $2$, the largest intersecting sets of $\gammal{2}{q}$ are more complicated, but we characterized them in Theorem~\ref{thm3} and Theorem~\ref{thm4}.
	
	One of the main motivation to study these groups is due to the fact that $\gammal{2}{q}$ is a split extension of $\gl{2}{q}$ by the Galois group of the field. An interesting direction to explore is to characterize the maximum intersecting sets of permutation groups which are the semidirect product $G\rtimes C_k$, for some $k\geq 2$, provided that enough information is known about the maximum intersecting sets of $G$. This question was asked in \cite{ahmadi2015erdHos} for $k = 2$ by Ahmadi and Meagher. Even for the case of the permutation group $\pgl{2}{q}=\psl{2}{q} \rtimes C_2$, for $q\equiv 1 \pmod 4$, acting on cosets of the dihedral group $\dih{q-1}$, it is still not known whether it has the EKR property, let alone a characterization of the largest intersecting sets.
	
	\vspace*{0.6cm}
	\noindent {\bf Acknowledgement: } The research of both authors is supported in part by the Ministry of Education, Science and Sport of Republic of Slovenia (University of Primorska Developmental
	funding pillar). The second author is also supported by the Slovenian Research Agency through the research project J1-50000.
	
\end{document}